\newcounter{citedtheorems}
\newtheorem{defn}{Definition}[section]
\newtheorem{theorem}[defn]{Theorem}
\newtheorem*{theorem-m}{Theorem \ref{main-theorem}}
\newtheorem*{thm-m}{Main Theorem}
\newtheorem*{theorem-abs1}{Theorem \ref{ind-theorem}}
\newtheorem*{theorem-abs2}{Theorem \ref{a23}}
\newtheorem*{theorem-abs3}{Theorem \ref{ind-new}}
\newtheorem*{theorem-abs4}{Theorem \ref{m1}}
\newtheorem*{defn-abs}{Definition \ref{d:excellent}}
\newtheorem{thm-lit}[citedtheorems]{Theorem}
\newtheorem{defn-lit}[citedtheorems]{Definition}
\newtheorem{fact-lit}[citedtheorems]{Fact}
\newtheorem{fact}[defn]{Fact}
\newtheorem{cor}[defn]{Corollary}
\newtheorem{concl}[defn]{Conclusion}
\newtheorem{conv}[defn]{Convention}
\newtheorem{claim}[defn]{Claim}
\newtheorem{lemma}[defn]{Lemma}
\newtheorem{obs}[defn]{Observation}
\newtheorem{rmk}[defn]{Remark}
\newtheorem{disc}[defn]{Discussion}
\newtheorem{expl}[defn]{Example}
\newtheorem{qst}[defn]{Question}
\newcommand{\br}{\vspace{2mm}}
\newcommand{\step}{\br\noindent\emph}
\newcommand{\eff}{\mathcal{F}}
\newcommand{\gee}{\mathcal{G}}
\newcommand{\kleq}{\trianglelefteq}
\newcommand{\tlf}{\trianglelefteq}
\newcommand{\rn}{\operatorname{Range}}
\newcommand{\mpp}{\mathfrak{P}}
\newcommand{\mct}{\mathcal{T}}
\newcommand{\de}{\mathcal{D}}
\newcommand{\fss}{{\mathcal{P}}_{\aleph_0}}
\newcommand{\Los}{\L o\'s }
\newcommand{\trv}{\mathbf{t}} 
\newcommand{\uu}{\mathcal{U}}
\newcommand{\jj}{\mathbf{j}}
\newcommand{\mcp}{\mathcal{P}}
\newcommand{\xpu}{{\overline{x}}_{\mcp(u)}}
\newcommand{\ba}{\mathfrak{B}}
\newcommand{\qro}{{\operatorname{Qr}_0}}
\newcommand{\qri}{{\operatorname{Qr}_1}}
\newcommand{\lao}{[\lambda]^{<\aleph_0}}
\newcommand{\lba}{\Lambda_{\ba, \overline{a}}}
\newcommand{\ee}{\mathcal{E}}
\newcommand{\vp}{\varphi}
\newcommand{\lcf}{\operatorname{lcf}}
\newcommand{\ma}{\mathbf{a}}
\newcommand{\mb}{\mathbf{b}}
\newcommand{\mc}{\mathbf{c}}
\newcommand{\mx}{\mathbf{x}}
\newcommand{\my}{\mathbf{y}}
\newcommand{\fin}{\operatorname{Fin}}
\newcommand{\fins}{\operatorname{Fin_s}}
\newcommand{\trg}{T_{rg}}
\newcommand{\dm}{\operatorname{dom}}
\newcommand{\rstr}{\upharpoonright}
\title{A dividing line within simple unstable theories}
\author{M. Malliaris and S. Shelah}\thanks{\emph{Thanks:} 
Malliaris was partially supported by NSF grant DMS-1001666, by a G\"odel fellowship 
and by Shelah's NSF grants DMS-0600940 and DMS-1101597 (Rutgers). 
Shelah was partially supported by Israel Science Foundation grant 710/07. 
This is paper 999 in Shelah's list of publications.}
\address{Department of Mathematics, University of Chicago, 5734 S. University Avenue, Chicago, IL 60637, USA and
Einstein Institute of Mathematics, Edmond J. Safra Campus, Givat Ram, 
The Hebrew University of Jerusalem, Jerusalem, 91904, Israel}
\email{mem@math.uchicago.edu}
\address{Einstein Institute of Mathematics, Edmond J. Safra Campus, Givat Ram, The Hebrew
University of Jerusalem, Jerusalem, 91904, Israel, and Department of Mathematics,
Hill Center - Busch Campus, Rutgers, The State University of New Jersey, 110
Frelinghuysen Road, Piscataway, NJ 08854-8019 USA}
\email{shelah@math.huji.ac.il}
\urladdr{http://shelah.logic.at}
\begin{document}

\begin{abstract}
We give the first (ZFC) dividing line in Keisler's order among the unstable theories, 
specifically among the simple unstable theories.
That is, for any infinite cardinal $\lambda$ for which there is $\mu < \lambda \leq 2^\mu$, 
we construct a regular ultrafilter $\de$ on $\lambda$ such that (i)
for any model $M$ of a stable theory or of the random graph, $M^\lambda/\de$ is $\lambda^+$-saturated but 
(ii) if $Th(N)$ is not simple or not low then $N^\lambda/\de$ is not $\lambda^+$-saturated. 
The non-saturation result relies on the notion of flexible ultrafilters. To prove the saturation result
we develop a property of a class of simple theories, called $\qri$, generalizing the fact that whenever $B$ is a set of parameters
in some sufficiently saturated model of the random graph, $|B| = \lambda$ 
and $\mu < \lambda \leq 2^\mu$, then there is a set $A$ with $|A| = \mu$
such that any nonalgebraic $p \in S(B)$ is finitely realized in $A$. 
In addition to giving information about simple unstable theories,
our proof reframes the problem of saturation of ultrapowers in several key ways. 
We give a new characterization of good filters in terms of ``excellence,'' a measure of the accuracy of the
quotient Boolean algebra. We introduce and develop the notion of {moral} ultrafilters on Boolean algebras.  
We prove a so-called ``separation of variables'' result which shows how the problem of constructing ultrafilters to have
a precise degree of saturation may be profitably separated into a more set-theoretic stage, building an excellent filter,
followed by a more model-theoretic stage: building moral ultrafilters on the quotient Boolean algebra, a process
which highlights the complexity of certain patterns, arising from first-order formulas, in certain Boolean algebras.
\end{abstract}

\maketitle

\section{\hspace{2mm}Introduction}

In 1967 Keisler introduced a framework for comparing the complexity of (countable) first-order theories in terms of the relative difficulty
of producing saturated regular ultrapowers. 
Morley, reviewing the paper \cite{keisler}, wrote that ``the exciting fact is that $\tlf$ gives a rough measure of the `complexity' of a theory. 
For example, first order number theory is maximal while theories categorical in uncountable powers are minimal.'' 

The only known classes in Keisler's order appear in Theorem \ref{k-known} below.

\begin{thm-lit} \emph{(Results on classes in Keisler's order)} \label{k-known} It is known that:

\[ \mct_1 < \mct_2 < T_{rg} \cdots ?? \cdots \leq \mct_{max} \]
where $\mct_1 \cup \mct_2$ is precisely the class of countable stable theories, and:
\begin{itemize}
\item $T_1$, the minimum class, is the set of all $T$ without the finite cover property $($so stable$)$, e.g. algebraically closed fields.
\item $T_2$, the next largest class, is the set of all stable $T$ with the finite cover property.
\end{itemize}
Among the unstable theories, it is known that:
\begin{itemize}
\item There is a minimum class $\mct_{min}$, which contains the random graph.
\item Among the theories with $TP_2$, there is a minimum class $\mct_{*}$, which contains $T_{feq}$.
\item There is a maximum class $\mct_{max}$, containing all theories with $SOP_3$, thus all linear orders. 
\end{itemize}
However, no model-theoretic identities of unstable classes are known. %
\end{thm-lit}

Keisler in the fundamental 1967 paper \cite{keisler} defined the order and showed the existence of minimum and maximum classes, and defined the finite cover property.
Shelah 1978 \cite{Sh:a} established the identity of $T_1$ and $T_2$, showing that Keisler's order independently detected 
certain key dividing lines from classification theory. Shelah also proved that the theory of linear order, and more generally
$SOP$, belongs to $T_{max}$, and in 1996 \cite{Sh500} extended this to $SOP_3$. Malliaris 2010 \cite{mm4} proved
the existence of a minimum class among the $TP_2$ theories, which contains the theory of a parametrized family 
of independent equivalence relations. 
For further details, see the introduction to the authors' paper \cite{mm-sh-v1}. 
The only prior non-ZFC result was Shelah's result \cite{Sh:c} VI.3.10, which implies 
that if MA and not CH the random graph is not $\aleph_1$-maximal in Keisler's order.

Recently, there has been substantial progress in understanding the interaction of ultrafilters and theories
(Malliaris \cite{mm-thesis}-\cite{mm5}, Malliaris and Shelah \cite{mm-sh-v1}-\cite{treetops}). These results set the stage for our current work.

In this paper we give the first ZFC dividing line among the unstable theories, specifically among the simple unstable theories. 
Our proof reframes the problem of saturation of ultrapowers in terms of so-called excellence of an intermediate filter and so-called morality
of an ultrafilter on the resulting quotient Boolean algebra. We also introduce a property $\qri$ which captures
relevant structure of a class of simple theories including the random graph.

\begin{thm-m} \emph{(Theorem \ref{main-theorem} below)} 
Suppose $\lambda, \mu$ are given with $\mu < \lambda \leq 2^\mu$. Then 
there is a regular ultrafilter $\de$ on $\lambda$ which saturates ultrapowers of
all countable stable theories and of the random graph, but fails to saturate ultrapowers of any non-low or non-simple theory. 
\end{thm-m}

The organization of the paper is as follows. \S 2 is an extended overview of our methods and results.
\S\ref{s:defn} defines Keisler's order, as well as regular, good, flexible (said of filters) and
simple and low (said of theories). \S\ref{s:excellent} motivates and defines the notion of \emph{excellence} for a filter. 
\S\ref{s:separation} defines moral ultrafilters and proves
the theorem on separation of variables. \S\ref{s:l-e} contains the ingredients needed for proving the existence of excellent filters admitting specified homomorphisms. 
It begins with review of constructions via independent families, introduces some notation needed for the current setting 
and concludes by proving the two key inductive steps for the existence proof.
\S\ref{s:existence} contains the existence proof. \S\ref{s:flexible} contains the proof of non-saturation
via non-flexibility described above. \S\ref{s:rg} motivates and defines $\qri = \qri(T, \lambda, \mu)$, 
and proves that this holds for the theory of the random graph. 
\S\ref{s:morality} proves that when $\mu < \lambda \leq 2^\mu$ there is an ultrafilter on $\ba_{2^\lambda, \mu}$ which is moral for 
any theory such that $\qri(T,\lambda, \mu)$. \S\ref{s:dividing-line} contains the statement and proof of the main theorem. 

\br
A continuation of this paper is in preparation. 

\setcounter{tocdepth}{1}
\tableofcontents

\section{\hspace{2mm}Summary of results} \label{s:overview}

To explain our methods, we give here an overview of the main theorems and objects of the paper.
This discussion is informal, and many definitions are deferred to later sections. 

\begin{conv}
For transparency, all filters are regular $($Definition $\ref{regular})$, and all theories are countable. 
\end{conv}

\subsection{Excellent filters.} 
A key point of leverage in our argument is the development of so-called excellent filters, a notion which gives a new
characterization of goodness, Theorem \ref{t:equivalent}. 

To frame our approach, we briefly discuss good filters. A filter $\de$ on $\lambda$ is said to be $\lambda^+$-good if
every monotonic $f: \fss(\lambda) \rightarrow \de$ has a multiplicative refinement, Definition \ref{d:good} below.
From the point of view of saturation, good ultrafilters are maximally powerful in the sense that if $\de$ is a regular
good (i.e. $\lambda^+$-good) ultrafilter on $\lambda$ and $M$ any model in a countable signature, then
$M^\lambda/\de$ is $\lambda^+$-saturated. Moreover, the maximum class in Keisler's order has a set-theoretic characterization:
it is precisely the set of countable theories $T$ such that $M \models T$ and $\de$ regular implies
$M^\lambda/\de$ is $\lambda^+$-saturated iff $\de$ is good, Keisler \cite{keisler}.  [For an account of this correspondence, 
relevant history and recent work on Keisler's order, see \cite{mm-sh-v1} Sections 1, 4.]
The existence of $\lambda^+$-good ultrafilters on $\lambda$ is a result of Keisler assuming GCH \cite{keisler-1} 
and of Kunen in ZFC \cite{kunen}. See for instance \cite{Sh:c} pps. 345-347.

To make finer distinctions in ultrafilter construction, one needs a greater degree of precision than is a priori available from the
definition of goodness. The issue comes into focus when working with {filters} rather than ultrafilters, as we now discuss.

\step{Remark 1.}   Let $\de$ be a regular good filter on $I$ and let $\overline{A} = \langle A_u : u \in \lao \rangle$ be a monotonic sequence of nonzero elements of $\mcp(I)$, not necessarily a sequence of elements of $\de$. A priori, goodness does not guarantee
a multiplicative refinement for $\overline{A}$. Moreover, suppose the image of $\langle A_u : u \in \lao \rangle \subseteq \mcp(I)$ in the quotient Boolean algebra
$\ba = \mcp(I)/\de$ is $\langle \ma_u : u \in \lao \rangle$, and $\overline{\ma}$ has a multiplicative refinement $\langle \mb_u : u \in \lao \rangle$ in $\ba$.
Then $\overline{A}$ will have a refinement $\overline{B}$ which is multiplicative \emph{mod $\de$}. That is, $u,v \in \lao \implies B_u \cap B_v = B_{u \cup v} \mod \de$, which a priori does not imply that an actual
multiplicative refinement of $\overline{A}$ exists.

\step{Remark 2.} From this and other considerations, one sees that a useful intensification of goodness will be
\emph{making the quotient Boolean algebra more precise}. What does this mean? 
Roughly speaking, that properties of sequences in the quotient Boolean algebra
accurately reflect those in $\mcp(I)$: if a sequence in the quotient Boolean algebra appears to have
certain properties, e.g. multiplicativity, then we can indeed pull it back to a multiplicative sequence in $\mcp(I)$. 

\step{Remark 3}. 
The right notion of ``properties'' is suggested by the slogan ``solving equations modulo $\de$.''  That is, we will 
be concerned with preserving certain distinguished terms in the language of Boolean algebras. (The definition of this set of terms
$\Lambda$, Definition \ref{d:near} below, arises naturally from the inductive construction of excellence, see 
Claim \ref{e-ind-step}, and also \ref{e:boolean}.)

Indeed, the form of this abstraction naturally suggests that we can, at little cost, build $\de$ to be accurate for a range of properties
including, but not limited to, multiplicativity; see e.g. \ref{c:some-examples}.

\begin{defn-abs} \emph{(Excellent filters)} 
\\Let $\de$ be a filter on the index set $I$. We say that $\de$ is \emph{$\lambda^+$-excellent}
when: if $\overline{A} = \langle A_u : u \in \lao \rangle$ with $u \in \lao \implies A_u \subseteq I$, then
we can find $\overline{B} = \langle B_u : u \in \lao \rangle$ such that:
\begin{enumerate}
\item for each $u \in \lao$, $B_u \subseteq A_u$
\item for each $u \in \lao$, $B_u = A_u \mod \de$
\item \emph{if} $u \in \lao$ and $\sigma \in \Lambda = \Lambda_{\de, \overline{A}|_u}$, so $\sigma(\overline{A}|_{\mcp(u)}) = \emptyset \mod \de$, 
\emph{then} $\sigma(\overline{B}|_{\mcp(u)}) = \emptyset$
\end{enumerate}
We say that $\de$ is $\xi$-excellent when it is $\lambda^+$-excellent for every $\lambda < \xi$. 
\end{defn-abs}

\step{Remark 4.} The analysis and definition of excellence will have the following consequences for ultrafilter construction. Once we have a notion of excellent filter, there is a potentially two-stage construction of a given ultrafilter in which we 
first ensure excellence of some intermediate filter $\de$, 
and then move to work directly on the quotient Boolean algebra for the remainder of the
construction, leveraging the guarantee that the work on the Boolean algebra will be sufficiently accurate.
 
This ``paradigm shift'' is accomplished in Section \ref{s:separation} with Theorem \ref{t:separation}, also quoted later in this
introduction. We have called Theorem \ref{t:separation} a separation of variables result. 
In some sense it allows us to separate the more
set-theoretic considerations involved in building an excellent filter from the more model-theoretic considerations 
involving the complexity of patterns coming from a given formula in certain Boolean algebras $\ba_{2^\lambda, \mu}$.

\br

\subsection{Morality and separation of variables} 
The phenomenon of excellence naturally gives rise to a complementary property we call ``morality,'' Definition \ref{d:moral}.
Say that an ultrafilter $\de_*$ on a Boolean algebra $\ba$ is moral for a theory $T$ if, roughly speaking, 
any incidence pattern for $T$ represented in $\ba$ can be resolved (multiplicatively refined) in $\de_*$.  
The definition 
does not rely on the setting of reduced products.  
Excellence and morality then combine to give saturation in the following way:

\br
\noindent\textbf{Theorem \ref{t:separation}. (Separation of variables)}
\emph{Suppose that we have the following data:}
\begin{enumerate}
\item \emph{$\de$ is a regular, $\lambda^+$-excellent filter on $I$}
\item \emph{$\de_1$ is an ultrafilter on $I$ extending $\de$, and is ${|T|}^+$-good}
\item \emph{$\ba$ is a Boolean algebra }
\item \emph{$\jj : \mcp(I) \rightarrow \ba$ is a surjective homomorphism with $\de = \jj^{-1}(\{ 1_\ba \})$}
\item \emph{$\de_* = \{ \mb \in \ba : ~\mbox{if} ~\jj(A) = \mb ~\mbox{then}~ A \in \de_1 \}$}
\end{enumerate}
\emph{Then the following are equivalent:}
\begin{itemize}
\item[(A)] \emph{$\de_*$ is $(\lambda, \ba, T)$-moral, i.e. moral for each formula $\vp$ of $T$.}
\item[(B)] \emph{For any $\lambda^+$-saturated model $M \models T$, $M^\lambda/\de_1$ is $\lambda^+$-saturated.}
\end{itemize}
\br

\begin{disc}
What does Theorem \ref{t:separation} accomplish? At first glance, it may appear that we have traded one construction problem
(building an ultrafilter on $\lambda$) for another (building an ultrafilter on $\ba$). The gain is revealing a point of 
leverage which will allow us to separate theories by realizing some types while omitting others. 
The leverage is provided by the size of a maximal antichain in the quotient Boolean algebra $B(\de) = \mcp(I)/\de$.
By Theorem \ref{t:existence}, when building excellent filters we are relatively free to modify this quotient Boolean algebra.

Then the strategy is as follows. 
For non-saturation, we will show that if $CC(B(\de)) = \mu^+ < \lambda^+$, no subsequent ultrafilter can be flexible, and then apply our prior work. 
For saturation, we will show that when $\mu < \lambda \leq 2^\mu$ this need not prevent saturation of the random graph.
\end{disc}

\br

Sections \ref{s:l-e}-\ref{s:existence} contain a proof of the existence of excellent filters meeting the requirements
of Theorem \ref{t:separation}. Theorem \ref{t:existence}, which we now quote, is more general than what is needed for the
application to Theorem \ref{main-theorem}. In that specific case, one could use Theorem \ref{t:equivalent} showing the equivalence
of excellent and good, and then build $\de$ to be a good regular filter on $\lambda$ such that $(I, \de, \gee)$ 
is a $(\lambda, \mu)$-good triple. The more general result reflects the fact that the framework of 
Theorem \ref{t:separation} is a main contribution of the paper; 
we make significant further use of this framework, for a wider range of Boolean algebras, in work in preparation.   
Moreover, note that the inductive Claim \ref{e-ind-step} of Theorem \ref{t:existence} clearly shows 
the naturalness of the definition of $\Lambda$ from \ref{d:near}, and thus in some sense, its optimality.

\br
\noindent\textbf{Theorem \ref{t:existence}. (Existence theorem)}
\emph{Let $\mu \leq \lambda$, $|I| = \lambda$ and let $\ba$ be a $\mu^+$-c.c. complete Boolean algebra of cardinality $\leq 2^\lambda$. 
Then there exists a regular excellent filter $\de$ on $I$ and a surjective homomorphism $\jj: \mcp(I) \rightarrow \ba$
such that $\jj^{-1}(1) = \de$.}
\br

Theorem \ref{t:existence} requires several lemmas and some notation, but otherwise proceeds smoothly. Briefly, we need to accomplish two things:
first, to ``solve'' all instances of excellence, and second to ensure the existence of the homomorphism $\jj$. 
We begin with a regular filter $\de_0$ and two disjoint independent families, $\eff \subseteq {^I\lambda}$ of cardinality $2^\lambda$,
and $\gee \subseteq {^I2}$ of cardinality $|\ba|$. We extend to a second filter $\de_1$ in which 
$\{ \mb_\gamma : \gamma < |\ba| \}$ and $\{ g_\gamma^{-1}(1) : \gamma < |\gee| \}$ ``look alike'' in the sense of Definition \ref{d:m1}.
We then build the filter $\de$ by induction on $\alpha < 2^\lambda$ while respecting this background correspondence, consuming the functions $\eff$
while giving no further constraints on $\gee$. At odd successor stages, we ensure that a given subset of $I$ will have an appropriate
homomorphic image via Lemma \ref{l:free}. At even stages, we solve instances of excellence using Claim \ref{e-ind-step}.

\subsection{Non-saturation}
The non-saturation results arise via the notion of
\emph{flexible} filter, introduced in Malliaris \cite{mm-thesis}. By Malliaris \cite{mm-thesis}, 
flexibility is a necessary condition for an ultrafilter to saturate some non-low theory. 
By Malliaris \cite{mm4} for the case of $TP_2$, and Malliaris and Shelah \cite{treetops} for the case of $SOP_2$, 
flexibility is a necessary condition for an ultrafilter $\de$ to saturate some non-simple theory. 
We then adapt a proof of Shelah \cite{Sh:c} originally stated for goodness to show that when the c.c. of the quotient Boolean algebra
falls at or below the size of the index set, no subsequent ultrafilter will be flexible, and thus
every subsequent ultrafilter will fail to saturate any non-simple or non-low theory:

\br
\noindent \textbf{Corollary \ref{f:cor}. (Non-flexibility, thus non-saturation)}
\emph{Let $\mu < \lambda$ and let $\de$ be a regular $\lambda^+$-excellent filter on $\lambda$ given by
Theorem \ref{t:existence} in the case where $\jj(\mcp(I)) = \ba_{2^\lambda, \mu}$
$($so has the $\mu^+$-c.c.$)$. Then no ultrafilter extending $\de$, built by the methods of independent functions, is $\lambda$-flexible.}
\br

\subsection{Saturation}
The saturation results arise from a property of the random graph used by Shelah in \cite{Sh:c} Theorem VI.3.10.
This key property, which follows from Engelking-Karlowicz \cite{ek}, 
is that if $\mu < \lambda \leq 2^\mu$,  
$A \subseteq \mathfrak{C}_{\trg}$,
$|A| \leq \lambda$ then for some $B \subsetneq \mathfrak{C}_{\trg}$, $|B|=\mu$ we have that every nonalgebraic
$p \in S(A)$ is finitely realized in $B$. 
That is, the nonalgebraic types over a given set of size $\lambda$ can be finitely realized in a set of strictly smaller size; 
see \S \ref{s:rg} below for a proof.
Note that by \cite{Sh93}, every simple theory $T$ has a related, though weaker, property. 

We develop a generalization of this property appropriate for our context, called $\qri$.
Informally, $\qri(T, \lambda, \mu)$ says of $T$ that any monotonic sequence from $\fss(\lambda)$
into the given Boolean algebra $\ba = \ba_{2^\lambda, \mu}$, which accurately
``represents'' a pattern from the background theory $T$, can be approximated by
$\mu$ multiplicative sequences.  This property may be thought of as describing genericity, in the sense
of the independence property; it is naturally orthogonal, in a non-technical sense, 
to the phenomenon of dividing, in which the many instances of the formula $\vp$
are ``spread out'' and do not admit common realizations. 
In \S \ref{s:morality} we show that $\qri$ holds of the random graph: 

\br
\noindent\textbf{Lemma \ref{l:sequence}. ($\qri$ for the random graph)}
\emph{Let $T$ be the theory of the random graph. Then $\qri(T, \lambda, \mu)$.}
\br

Briefly, to prove Lemma \ref{l:sequence} begin with a ``possibility pattern,'' the avatar of a type. 
Choose a complete subalgebra of $\ba$ on which this sequence is supported,
and which itself is covered by few ultrafilters. Roughly speaking, we look at what happens to the type under each such ultrafilter
(we define a function which records how the parameters collide) 
and choose a small dense subset of types over this ``collapsed'' parameter set. Since types over the
``collapsed'' sets have parameters which are everywhere distinct, they will always be realized.  Now to find a ``cover'' of a
given finite fragment of the original type, we can first choose an ultrafilter in which its finitely many parameters remain distinct, then choose an appropriate member of the dense set of realized types. 

In \S \ref{s:morality} we apply Lemma \ref{l:sequence} to prove the existence of an ultrafilter $\de_*$ on $\ba_{2^\lambda, \mu}$ which is
moral for the theory of the random graph:

\br
\noindent\textbf{Theorem \ref{t:morality}. (The moral ultrafilter)}
\emph{Suppose $\mu < \lambda \leq 2^\mu$ and let $\ba = \ba_{2^\lambda,~\mu}$. Then there is an ultrafilter $\de_*$ on $\ba$ which is moral
for all countable theories $T$ such that $\qri(T, \lambda, \mu)$.  In particular, $\de_*$ is moral for all countable stable theories
and for the theory of the random graph.} 
\br

Finally, we combine these results to prove the main theorem:

\br
\begin{theorem-m} \emph{\textbf{(The dividing line)}}
Let $\mu < \lambda \leq 2^\mu$. Then there is a regular ultrafilter $\de$ on $\lambda$ such that:
\begin{enumerate}
\item for any countable theory $T$ such that $\qri(\lambda, \mu, T)$ and $M\models T$,
$M^\lambda/\de$ is $\lambda^+$-saturated.
\item in particular, when $T$ is stable or $T$ is the theory of the random graph,
$M^\lambda/\de$ is $\lambda^+$-saturated.
\item for any non-low or non-simple theory $T$ and $M \models T$, $M^\lambda/\de$ is not $\lambda^+$-saturated.
\end{enumerate}
Thus there is a dividing line in Keisler's order among the simple unstable theories. 
\end{theorem-m}

\br
\begin{disc} 
Following our work here, to separate theories $T, T^\prime$ in Keisler's order it is therefore sufficient 
to find a pair $(\ba, \de_*)$ s.t. $\ba$ is a $\lambda^+$-c.c. complete 
Boolean algebra of cardinality $\leq 2^\lambda$ and $\de_*$ an ultrafilter on $\ba$ 
which is $(\lambda, T)$-moral but not $(\lambda, T^\prime)$-moral. 
Note that this gives natural new ``outside'' definitions of classes of first-order theories in terms of whether e.g. every ultrafilter
on a given Boolean algebra is moral for $T$. Do such classes have nice inside definitions?
\end{disc}

\section{\hspace{2mm}Basic definitions} \label{s:defn}

Here we define Keisler's order, the properties (of filters) regular and good, and the properties
(of theories) simple and low. A fairly extensive discussion of Keisler's order, including an overview of relevant
recent work \cite{mm-thesis}-\cite{mm-sh-v2}, can be found in Malliaris and Shelah 2011 \cite{mm-sh-v1}. For an account of simplicity, including
results on Boolean algebras of nonforking formulas in simple theories from \cite{Sh93}, see the ``Primer of Simple Theories'' 
of Grossberg, Iovino and Lessmann \cite{GIL}.  
For further background on ultrafilters and ultrapowers, see \cite{Sh:c} Chapter VI, \cite{keisler}, \cite{mm-thesis}.

For transparency, we consider \emph{countable} first-order theories. 
We concentrate on regular ultrafilters which, by Theorem \ref{backandforth} below, focus attention on the theory rather than the choice of index models. 

\begin{conv}
{All ultrafilters in this paper, unless otherwise stated, are regular, Definition \ref{regular}.}

By ``$\de$ saturates $T$'' we will always mean:
$\de$ is a regular ultrafilter on the infinite index set $I$, $T$ is a countable complete first-order theory and for any $M \models T$,
we have that $M^I/\de$ is $\lambda^+$-saturated, where $\lambda = |I|$. 

We generally write ``$\de$ is an ultrafilter on $\lambda$'' thereby specifying the index set $\lambda$, rather 
than the field of sets $\mathcal{P}(\lambda)$. 
\end{conv}

\begin{defn} \label{regular}
A filter $\de$ on an index set $I$ of cardinality $\lambda$ is said to be \emph{$\lambda$-regular}, or simply 
\emph{regular}, if there exists a $\lambda$-regularizing family $\langle X_i : i<\lambda \rangle$, which means that:
\begin{itemize}
 \item for each $i<\lambda$, $X_i \in \de$, and
 \item for any infinite $\sigma \subset \lambda$, we have $\bigcap_{i \in\sigma} X_i = \emptyset$
\end{itemize}
Equivalently, for any element $t \in I$, $t$ belongs to only finitely many of the sets $X_i$. 
\end{defn}

Let $I = \lambda \geq \aleph_0$ and fix $f: \fss(\lambda)\rightarrow I$. Then 
$\{ \{ s\in I : \eta \in f^{-1}(s) \} : \eta < \lambda \}$ can be extended to a regular filter on $I$,
so regular ultrafilters on $\lambda \geq \aleph_0$ always exist, see \cite{ck73}.

By the next theorem, when $T$ is countable and $\de$ is regular, saturation of the ultrapower does not depend on the choice of index model. 
Thus the restriction to regular filters justifies the quantification over all models in Keisler's order, Definition \ref{keisler-order} below.

\begin{thm-lit} \label{backandforth} \emph{(Keisler \cite{keisler} Corollary 2.1 p. 30; see also Shelah \cite{Sh:c}.VI.1)}
Suppose that $M_0 \equiv M_1$, the ambient language is countable, and
$\de$ is a regular ultrafilter on $\lambda$.
Then ${M_0}^\lambda/\de$ is $\lambda^+$-saturated iff ${M_1}^\lambda/\de$
is $\lambda^+$-saturated.
\end{thm-lit}

\begin{defn}
A function with domain $\fss(\kappa)$ is called \emph{monotonic} if $u \subseteq v \in \fss(\kappa)$ implies
$f(v) \subseteq f(u)$, and \emph{multiplicative} if $f(u) \cap f(v) = f(u \cup v)$. 
\end{defn}

From the point of view of saturation, the most powerful ultrafilters are the \emph{good} ultrafilters, introduced
by Keisler \cite{keisler-1}. These saturate any [countable] theory, and thus witness the existence of a maximum class in Keisler's order. 

\begin{defn} \label{d:good}
A filter $\de$ on $\lambda \geq \aleph_0$ is called \emph{$\kappa^+$-good} if 
every monotonic function $f: \fss(\kappa) \rightarrow \de$ has a multiplicative refinement. 
$\de$ is called \emph{good} if it is $\lambda^+$-good.
\end{defn}

Keisler proved the existence of $\lambda^+$-good countably incomplete ultrafilters on $\lambda$ assuming $2^\lambda = \lambda^+$. 
Kunen \cite{kunen} gave a proof in ZFC, using independent families of functions (also called families of large oscillation). 
Kunen's construction technique and its subsequent development by the second author in Chapter VI of \cite{Sh:c} is a key ingredient of our approach in this paper. 

We now give some important model-theoretic properties. 
The reader interested primarily in ultrafilters rather than model theory may take
these properties as black boxes which give the non-saturation side of the argument in \S \ref{s:flexible}.

\begin{defn} \emph{(Simple, low)} \label{defn-low} Given a background theory $T$,
\begin{enumerate}
 \item A formula $\vp$ is \emph{simple} if for every $k < \omega$, $D(x=x,\{\vp\}, k) < \omega$.
 \item A formula $\vp = \vp(x,y)$ is \emph{low} if there exists $k = k_\vp <\omega$ such that $D(x=x, \{\vp\}, \infty) = D(x=x, \{\vp\}, k)$. 
Equivalently, there is $k = k_\vp$ such that for any indiscernible sequence of parameters $\langle a_i : i < \omega \rangle$,
$\ell(a_i) = \ell(y)$, if $\{ \vp(x;a_i) : i < \omega \rangle$ is 1-consistent, i.e. each 1-element subset is consistent, 
then either it is consistent or it is uniformly $k$-inconsistent, i.e. each $k$-element subset is inconsistent. 
 \item $T$ is said to be \emph{simple}, respectively \emph{low}, if every formula of $T$ is. 
 \item A theory which is not low is often called \emph{non-low}.
\end{enumerate} 
\end{defn}

\begin{rmk}
For simple theories, (2) is equivalent to Buechler's original definition 
\emph{\cite{buechler}} which asked that for every $\vp$, $D(x=x, \{\vp\}, \aleph_0) < \omega$. 
\end{rmk}

\br

Finally, we define Keisler's order, proposed in Keisler 1967 \cite{keisler}. 
This preorder $\kleq$ on theories is often thought of as a partial order on the $\kleq$-equivalence classes.
The hypothesis \emph{regular}, Definition \ref{regular}, justifies the quantification over all models.

\begin{defn} \label{keisler-order} \emph{(Keisler \cite{keisler})} 
Given countable theories $T_1, T_2$, say that:
\begin{enumerate}
 \item  $T_1 \kleq_\lambda T_2$ if for any $M_1 \models T_1, M_2 \models
T_2$, and $\de$ a regular ultrafilter on $\lambda$, \\if 
$M^{\lambda}_2/\de$
is $\lambda^+$-saturated then $M^{\lambda}_1/\de$ must be 
$\lambda^+$-saturated.
\item \emph{(Keisler's order)}
$T_1 \kleq T_2$ if for all infinite $\lambda$, $T_1 \kleq_\lambda T_2$.
\end{enumerate}
\end{defn}

\begin{qst}
Determine the structure of Keisler's order.
\end{qst}

\section{\hspace{2mm}Excellent filters} \label{s:excellent}

In this section we define ``$\lambda^+$-excellent filter,'' Definition \ref{d:excellent} below and develop some consequences
of this definition.

\begin{conv} \emph{(Conventions)} \label{c:1}
\begin{itemize}
\item We consider Boolean algebras in the language $\{ \cap, \cup, \leq, -, 0, 1 \}$ 
and will informally use symmetric difference $\Delta$ and setminus $\setminus$.  
$($Note that negation of $\ma$ will be denoted $1-\ma$;  overline, e.g $\overline{\ma}$, always denotes a sequence, not the complement of a set.) 
\item $\ba$ denotes a Boolean algebra, and all elements of Boolean algebras are written in boldface:
$\ma, \mb...$
\item $CC(\ba)$ is the minimum regular cardinal $\mu$ such that any partition (maximal disjoint subset) 
of $\ba$ has cardinality less than $\mu$.
\item For $\de$ a filter on the index set $I$, $B(\de)$ is the Boolean algebra $\mcp(I)/\de$. 
\item When $\de$ is a filter on an index set $I$ (or a Boolean algebra $\ba$),
$\de^+$ denotes the sets which are positive modulo $\de$, i.e. not equal to $\emptyset \mod \de$
\item If $X$ is a formula then we use the shorthand $X^1, X^0$ to denote $X, \neg X$ respectively.
\item $\Delta$ is used both for symmetric difference and for sets of formulas, but this is always clear from context.
\item $\de, \ee$ denote filters. 
\end{itemize}
\end{conv}

\begin{defn} \emph{(Boolean terms)}
\begin{enumerate}
\item Let $u$ be a finite set. We write $\overline{x}_{\mcp(u)} = \langle x_v : v \subseteq u \rangle$
for a sequence of variables indexed by subsets of $u$. 
\item By \emph{Boolean term} we mean a term in the language of Boolean algebras, see Convention $\ref{c:1}$.
\item For a Boolean term $\sigma$, we write $\sigma=\sigma(\xpu)$ to indicate the free variables are indexed this way. 
For $\sigma=\sigma(\xpu)$ a Boolean term and $\langle A_u  : u \in \lao \rangle$ a sequence of elements 
of some given Boolean algebra, we write $\sigma(\overline{A}|_{\mcp(u)})$ or equivalently, 
$\sigma( \langle A_v  : v \subseteq u \rangle )$ for the term evaluated on the relevant part of the sequence.
\end{enumerate}
\end{defn}

We will consider certain distinguished sets of Boolean terms. For further motivation, 
see Example \ref{e:boolean} and Claim \ref{c:some-examples} below.

\begin{conv} \label{c:near}
Let $\ba$ be a Boolean algebra and $\overline{\ma} = \langle \ma_u : u \in \lao \rangle$ be a sequence of 
elements of $\ba$. Let 
\[  N(\overline{a}{\rstr}_{\mcp(u)}) = \{ \langle a^\prime_v : v \subseteq u \rangle : ~\mbox{for some $w \subseteq u$ we have $a^\prime_v = a_v$ if $v \subseteq w$
and $a^\prime_v = 0_\ba$ otherwise} \} \]
\end{conv}

\begin{rmk}
For the purposes of this paper, we are interested in so-called possibility patterns, 
Definition \ref{d:poss} and thus it will be sufficient to restrict to \emph{monotonic}, $\lao$-indexed sequences, 
i.e. $v \subseteq u \implies \ma_u \subseteq \ma_v$, allowing some elements of the sequence to be $0$.
\end{rmk}

\begin{defn} \label{d:near}
For $\ba$ a Boolean algebra, $u$ finite, $\overline{a} = \langle a_v : v \subseteq u \rangle$
a sequence of members of $\ba$, 
\begin{enumerate}
\item Define $\lba$ to be the set 
\[\{ \sigma(\xpu) : \sigma(\xpu) ~\mbox{is a Boolean term such that $\ba \models$ ``$\sigma(\overline{a}^\prime) = 0$''
whenever $\overline{a}^\prime \in N(\overline{a})$ }\} \]

\item If $\de$ is a filter on $\ba$ then 
$\Lambda_{\ba, \de, \overline{a}} = \Lambda_{\ba_1, \overline{a}_1}$ where $\ba_1 = \ba/\de$ and 
$\overline{a}_1 = \langle a_v/\de : v \subseteq u \rangle$. 

\item If $\de$ is a filter on a set $I$, then $\de$ determines $I$, so we write
$\Lambda_{\de, \overline{a}}$ for $\Lambda_{\mcp(I), \de, \overline{a}}$.
\end{enumerate}
\end{defn}

\br

We now give one of the central definitions of the paper:

\begin{defn} \emph{(Excellent filters)} \label{d:excellent}
\\Let $\de$ be a filter on the index set $I$. We say that $\de$ is \emph{$\lambda^+$-excellent}
when: if $\overline{A} = \langle A_u : u \in \lao \rangle$ with $u \in \lao \implies A_u \subseteq I$, then
we can find $\overline{B} = \langle B_u : u \in \lao \rangle$ such that:
\begin{enumerate}
\item for each $u \in \lao$, $B_u \subseteq A_u$
\item for each $u \in \lao$, $B_u = A_u \mod \de$
\item \emph{if} $u \in \lao$ and $\sigma \in \Lambda_{\de, \overline{A}|_u}$, so $\sigma(\overline{A}|_{\mcp(u)}) = \emptyset \mod \de$, 
\emph{then} $\sigma(\overline{B}|_{\mcp(u)}) = \emptyset$
\end{enumerate}
We say that $\de$ is $\xi$-excellent when it is $\lambda^+$-excellent for every $\lambda < \xi$. 
\end{defn}

In this paper we focus on regular excellent filters. 
Since we will often refer to sequences of the kind just described, we give them a name:

\begin{defn}
Given $I$, $\de$, $\overline{A}$ as in Definition \ref{d:excellent}, we will call any $\overline{B}$ satisfying
$(1)$-$(3)$ of Definition \ref{d:excellent} a \emph{$\de$-excellent refinement of $\overline{A}$}. When the identity of 
$\de$ is clear, we may simply say ``excellent refinement.'' 
\end{defn}

\begin{expl} \label{e:boolean} 
The distinguished terms capture equations we can solve by isolating zero-sets which we can safely eliminate. 
As an example of why respecting \emph{all} Boolean terms would be too strong, consider a monotonic sequence $\overline{A} = \langle A_u : u \in \lao \rangle$ of elements of $\de$. 
Then for each $u, v \in \lao$, $A_u = A_v \mod \de$, i.e. $A_u \Delta A_v = \emptyset \mod \de$. 
Asking for a $\de$-equivalent refinement $\overline{B}$ in which $u, v \in \lao$ implies $B_u = B_v$ would 
require an instance of completeness, i.e. $\bigcap \{ A_u : u \in \lao \} \in \de$. 
\end{expl}

We now verify that the cases of main interest are captured by the definition of excellent.

\begin{claim} \label{c:some-examples}
Let $\de$ be a filter on $I$, i.e. on $\mcp(I)$.  
\begin{enumerate}
\item If $\de$ is $\lambda^+$-excellent and $A_u \subseteq I$ for $u \in \lao$,
then we can find $\overline{B}$ such that:
\begin{enumerate}
\item $\overline{B} = \langle B_u : u \in \lao \rangle$
\item $B_u \subseteq A_u$
\item $B_u = A_u \mod \de$
\item for all $u_0, u_1 \in \lao$, if $A_{u_0} \cap A_{u_1} = A_{u_0 \cup u_1} \mod \de$ then 
$B_{u_0} \cap B_{u_1} = B_{u_0 \cup u_1}$
\end{enumerate}

\item If $\de$ is $\lambda^+$-excellent and $A_\alpha \subseteq I$ for $\alpha < \lambda$,
then we can find $\overline{B}$ such that:
\begin{enumerate}
\item $\overline{B} = \langle B_\alpha : \alpha < \lambda \rangle$
\item $B_\alpha \subseteq A_\alpha$
\item $B_\alpha = A_\alpha \mod \de$
\item if $n \in \mathbb{N}$, $\alpha_0, \dots \alpha_{n-1} < \lambda$ and 
$\bigcap \{ A_{\alpha_\ell} : \ell < n \} = \emptyset \mod \de$, then 
$\bigcap \{ B_{\alpha_\ell} : \ell < n \} = \emptyset$
\end{enumerate}

\item If $\de$ is $\lambda^+$-excellent then $\de$ is $\lambda^+$-good.
\end{enumerate}
\end{claim}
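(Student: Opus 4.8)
\emph{Overview.} The plan is to read all three parts off Definition \ref{d:excellent} directly: in each case I will write down one explicit Boolean term $\sigma$ which (i) lies in the relevant set $\Lambda_{\de,\overline{A}|_u}$, so that clause (3) of excellence forces $\sigma(\overline{B}\rstr_{\mcp(u)})=\emptyset$ on the excellent refinement $\overline{B}$, and (ii) whose vanishing on $\overline{B}$ is precisely the conclusion sought. The one point needing care is that, unwinding Convention \ref{c:near} and Definition \ref{d:near}, membership in $\Lambda_{\de,\overline{A}|_u}$ is witnessed not by vanishing modulo $\de$ on $\overline{A}\rstr_{\mcp(u)}$ alone, but by vanishing modulo $\de$ on every \emph{principal truncation} of it, i.e.\ on every sequence obtained from $\langle A_v : v\subseteq u\rangle$ by replacing $A_v$ with $\emptyset$ for all $v\not\subseteq w$, where $w\subseteq u$ is arbitrary. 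So the terms below are engineered so that this extra check is automatic.

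\emph{Parts (1) and (2).} For (1), take a $\de$-excellent refinement $\overline{B}$ of $\overline{A}$; clauses (a)--(c) are Definition \ref{d:excellent}(1)--(2). Fix $u_0,u_1$ with $A_{u_0}\cap A_{u_1}=A_{u_0\cup u_1}\bmod\de$, set $u=u_0\cup u_1$, and let $\sigma:=(x_{u_0}\cap x_{u_1})\,\Delta\,x_{u_0\cup u_1}$, a term in $\xpu$. On the truncation with $w=u$, $\sigma$ evaluates to $(A_{u_0}\cap A_{u_1})\,\Delta\,A_{u_0\cup u_1}=\emptyset\bmod\de$; on any truncation with $w\subsetneq u$, at least one of $u_0,u_1$ — hence also $u_0\cup u_1$ — fails to be $\subseteq w$, so $\sigma$ evaluates to $\emptyset$ exactly. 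Thus $\sigma\in\Lambda_{\de,\overline{A}|_u}$, and Definition \ref{d:excellent}(3) gives $B_{u_0}\cap B_{u_1}=B_{u_0\cup u_1}$. For (2), encode the $\lambda$-indexed family as the ($\lao$-indexed, automatically monotonic) family $A_u:=\bigcap_{\alpha\in u}A_\alpha$ for $u\neq\emptyset$, $A_\emptyset:=I$; take a $\de$-excellent refinement $\overline{B}$ and put $B_\alpha:=B_{\{\alpha\}}$, giving (a)--(c). Given $\alpha_0,\dots,\alpha_{n-1}$ with $\bigcap_{\ell<n}A_{\alpha_\ell}=\emptyset\bmod\de$, set $u:=\{\alpha_0,\dots,\alpha_{n-1}\}$ and $\sigma:=\bigcap_{\ell<n}x_{\{\alpha_\ell\}}$; again $\sigma\in\Lambda_{\de,\overline{A}|_u}$ (on $w=u$ it is $\bigcap_\ell A_{\alpha_\ell}=\emptyset\bmod\de$, and on $w\subsetneq u$ some $\alpha_\ell\notin w$ makes it $\emptyset$), whence $\bigcap_{\ell<n}B_{\alpha_\ell}=\emptyset$.

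\emph{Part (3).} Let $f:\fss(\lambda)\to\de$ be monotonic and apply part (1) to $\overline{A}:=\langle f(u):u\in\lao\rangle$, obtaining $\overline{B}$. For any $u_0,u_1$, both $A_{u_0}\cap A_{u_1}$ and $A_{u_0\cup u_1}$ are finite intersections of members of $\de$, hence each equals $I$ modulo $\de$, so the hypothesis of part (1)(d) holds for every pair; therefore $g:=\langle B_u:u\in\lao\rangle$ is multiplicative. Moreover $g(u)\subseteq A_u=f(u)$, and $g(u)=A_u\bmod\de$ with $A_u\in\de$, so $g$ maps into $\de$. Thus $g$ is a multiplicative refinement of $f$, and since $f$ was arbitrary, $\de$ is $\lambda^+$-good.

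\emph{Main obstacle.} There is essentially no deep obstacle; the only step with content is verifying the principal-truncation condition that puts the displayed terms into $\Lambda_{\de,\overline{A}|_u}$. It succeeds here precisely because each term is set up so that zeroing out one of its ``leaf'' coordinates (e.g.\ $A_{u_0}$) automatically zeros the coordinate it is compared against (here $A_{u_0\cup u_1}$, since $u_0\subseteq u_0\cup u_1$), collapsing the term to $\emptyset$ — this respecting of the subset order is exactly why $\Lambda$, rather than all Boolean terms, is the right class, as illustrated by Example \ref{e:boolean}.
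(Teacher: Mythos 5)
Your proof is correct and follows essentially the same route as the paper: apply excellence to get $\overline{B}$, then for each of (1)(d) and (2)(d) exhibit the appropriate Boolean term and verify (via the truncation condition of Convention \ref{c:near}/Definition \ref{d:near}) that it lies in $\Lambda_{\de,\overline{A}|_u}$, and observe that (3) is the special case of (1) where all $A_u\in\de$. The only cosmetic differences are that in part (2) you take $A_u:=\bigcap_{\alpha\in u}A_\alpha$ where the paper sets $A_u:=\emptyset$ for $|u|\geq 2$ (both are valid monotonic extensions), and your case analysis in part (1) is slightly cleaner (the paper's parallel passage is garbled, repeating one case).
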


\begin{proof}
Note that (3) follows from (1) in the case where the sequence $\langle A_u : u \in \lao \rangle$ is assumed to be a sequence of elements of $\de$.

\br
(1) Let $\overline{A} = \langle A_u : u \in \lao \rangle$ be given, with $A_u \in \de^+$. 
We are assuming $\de$ is $\lambda^+$-excellent, so let  $\overline{B} = \langle B_u : u \in \lao \rangle$ be an excellent refinement. Then conditions (a), (b), (c) 
hold by definition. For condition (d), it will suffice to show that \emph{if} (*) for all $u_0, u_1 \in \lao$,  $A_{u_0} \cap A_{u_1} = A_{u_0 \cup u_1} \mod \de$, 
\emph{then} (**), where $(**)$ is the condition that, writing $u = u_0 \cup u_1$, the Boolean term
\[ \sigma(\xpu) = \left( ( x_{u_0} \cap x_{u_1} ) \Delta x_{u} \right)  \in \Lambda_{\de, \overline{A}|_{\mcp(u)}} \]
Why would this suffice? Because we would then have, as an immediate consequence of ``excellent refinement,'' that
$u_0, u_1 \in \lao$ implies $B_{u_0} \cap B_{u_1} = B_u$ {since} $(**)$ implies $A_{u_0} \cap A_{u_1} = A_u \mod \de$. 

So let us prove that $(*)$ implies $(**)$. 
That is, we verify that $(*)$ implies $\sigma$ evaluates to $\emptyset \mod \de$ on any term ``below'' $\overline{A}|_{\mcp(u)}$ in the sense of Definition \ref{d:near}.
For any $w \subseteq u := u_0 \cup u_1$,
\begin{itemize}
\item If $w = u$, then $\sigma = \emptyset \mod \de$ as $A_{u_0} \cap A_{u_1} = A_{u_2} \mod \de$ by $(*)$. 
\item if $w \subseteq u_1 \land w \not\subseteq u_0$ then we have either $\emptyset \cap \emptyset = \emptyset \mod \de$
or else $A_{u_0} \cap \emptyset = \emptyset \mod \de$, both of which are clearly true;
\item the case $w \subseteq u_1 \land w \not\subseteq u_0$ is the same as the previous case by symmetry.
\end{itemize}

In other words, 
since a sufficient condition for being multiplicative is expressible by one of our distinguished terms,
any sequence $\overline{A}$ which is multiplicative $\mod \de$, even if it does not itself consist of elements of $\de$, will have a true multiplicative refinement 
$\overline{B}$ as desired.

This completes the proof.

\br

(2) We may naturally extend $\overline{A}$ to a monotonic sequence of elements of $\mcp(I)$
indexed by $u \in \lao$, where $|u| \geq 2 \implies A_u = \emptyset$. 
Let $\langle A^\prime_u : u \in \lao$ be such a sequence. 
Apply Definition \ref{d:excellent} to obtain an excellent refinement $\langle B^\prime_u : u \in \lao \rangle$. Let $B_\alpha = B^\prime_{\{\alpha\}}$. 
Conditions (a)-(c) clearly hold. For condition (d), let $n \in \mathbb{N}, a_0, \dots a_{n-1} < \lambda$, $u = \{ a_0, \dots a_{n-1} \}$;
it suffices to check that the Boolean term 
\[ \sigma(x_{\{\alpha_0 \}}, \dots , x_{\{\alpha_{n-1} \}}) = x_{\{\alpha_0 \}} \cap \dots \cap x_{\{\alpha_{n-1} \}}  \in \Lambda_{\de, u} \]
If $w = u$, the demand is that $A^\prime_{\{\alpha_0\}} \cap \dots \cap A^\prime_{\{ \alpha_{n-1} \} } = \emptyset \mod \de$, which 
holds by hypothesis. If $w \subsetneq u$ then in the intersection we replace at least one $A_{\{\alpha_\ell\}}$ by $\emptyset$,
so the intersection is empty as desired. 
\end{proof}

\begin{rmk} \label{r:some-examples}  
Claim \ref{c:some-examples} remains true in the case where we replace $\mcp(I)$ by an arbitrary Boolean algebra $\ba$,
with the same proof.
\end{rmk}

For a characterization of excellence via goodness, see the Appendix p. \pageref{s:appendix}.

\section{\hspace{2mm}Separation of variables} \label{s:separation}

The main result of the section is ``separation of variables,'' Theorem \ref{t:separation}.  The reader may find it useful to refer to the discussion in
\S \ref{s:overview} above, which frames this result.

\br

\begin{defn} \label{d:poss} \emph{(Possibility patterns)}
Let $\ba$ be a Boolean algebra.
Say that $\overline{\ma}$ is a $(\lambda, \ba, T, \vp)$-possibility when:
\begin{enumerate}
\item $\overline{\ma} = \langle \ma_u : u \in \lao \rangle$ 
\item $u \in \lao$ implies $\ma_u \in \ba^+$
\item if $u \subseteq v \in \lao$ then $\ma_v \subseteq \ma_u$ (monotonicity)
\item if $u_* \in \lao$ and $\mb \in \ba^+$ satisfies
\[ \left( u \subseteq u_* \implies \left( ( \mb \leq \ma_u )  ~\lor~ ( \mb \leq 1 - \ma_u ) \right) \right) \land 
\left( \alpha \in u_* \implies \mb \leq \ma_{\{\alpha\}} \right) \]
then we can find a model $M \models T$ and $a_\alpha \in M$ for $\alpha \in u_*$ such that for every $u \subseteq u_*$,
\[ M \models (\exists x)\bigwedge_{\alpha \in u} \vp(x;a_\alpha) ~~ \mbox{iff} ~~ \mb \leq \ma_u \] 
\end{enumerate}
\end{defn}

\begin{disc}
In order to build ultrafilters on Boolean algebras which saturate a given theory, we will need a way to 
capture those sequences whose multiplicative refinements are truly necessary for, or visible to, the theory in question. 
A slogan for Definition \ref{d:poss} might be that ``any nonzero element of $\ba$ inducing an ultrafilter on $\overline{\ma}$ 
reveals a consistent $\vp$-configuration,'' e.g. in the sense of the characteristic sequences of \cite{mm2}.
\end{disc}

\begin{defn} \emph{(Moral ultrafilters on Boolean algebras)} \label{d:moral}
We say that an ultrafilter $\de$ on the Boolean algebra $\ba$ is $(\lambda, \ba, T, \vp)$-moral when
for every $(\lambda, \ba, T, \vp)$-possibility $\overline{a} = \langle a_u : u \in \lao \rangle$ satisfying
\begin{itemize} 
\item $u \in \lao \implies a_u \in \de$ 
\item $v \subseteq u \in \lao \implies a_u \subseteq a_v$
\end{itemize}
there is a multiplicative $\de$-refinement $\overline{b} = \langle b_u : u \in \lao \rangle$, i.e.
\begin{enumerate}
\item $u_1, u_2 \in \lao \implies b_{u_1} \cap b_{u_2} = b_{u_1 \cup u_2}$
\item $u \in \lao \implies b_u \subseteq a_u$
\item $u \in \lao \implies b_u \in \de$
\end{enumerate}
We write $(\lambda, \ba, T, \Delta)$-moral to mean $(\lambda, \ba, T, \vp)$-moral for all $\vp \in \Delta$, 
and $(\lambda, \ba, T)$-moral to mean for all formulas $\vp$ in the language of $T$, see Remark \ref{moral-rmk}.
\end{defn}

\begin{rmk} \label{moral-rmk}
Note that ``$(\lambda, \ba, T)$-moral'' in Definition \ref{d:moral} indeed means that morality holds locally, for each $\vp$. 
The global and local cases are not different in our context thanks to Fact \ref{phi-types}, or, in the case of a larger language,
Corollary \ref{phi-types-general}.
\end{rmk}

\begin{fact} \emph{(Local saturation implies saturation, \cite{mm1} Theorem 12)} \label{phi-types}
Suppose $\de$ is a regular ultrafilter on $I$ and $T$ a countable complete first order theory. Then for any $M^I/\de$, the following are equivalent:
\begin{enumerate}
\item $M^I/\de$ is $\lambda^+$-saturated.
\item $M^I/\de$ realizes all $\vp$-types over sets of size $\leq \lambda$, for all formulas $\vp$ in the language of $T$.
\end{enumerate}
\end{fact}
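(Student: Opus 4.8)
The implication $(1) \Rightarrow (2)$ is immediate, since a $\lambda^+$-saturated model realizes every consistent type over a set of parameters of size $\le \lambda$, and any $\vp$-type is consistent and extends to a complete type whose realization realizes it. The content is $(2) \Rightarrow (1)$, and the plan is to reduce realizing an arbitrary complete type to finding a multiplicative refinement of an associated monotone map $\lao \to \de$, and then to build that refinement from the per-formula hypothesis by a patching argument in which regularity forces each coordinate of the index set to involve only finitely many formulas.

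\emph{The dictionary.} Fix $M \models T$, write $N = M^I/\de$, $\lambda = |I|$, and let $p(x)$ be a complete type over $A \subseteq N$ with $|A| \le \lambda$; enumerate $p = \{\psi_\alpha(x; \overline{a}_\alpha) : \alpha < \lambda\}$ with $\overline{a}_\alpha = \langle \overline{a}_\alpha[t] : t \in I\rangle/\de$. For $\sigma \in \lao$ put
\[ B_\sigma = \{\, t \in I : M \models (\exists x)\, \textstyle\bigwedge_{\alpha \in \sigma} \psi_\alpha(x; \overline{a}_\alpha[t]) \,\}. \]
By \Los and the finite satisfiability of $p$, each $B_\sigma \in \de$, and $\sigma \mapsto B_\sigma$ is monotone. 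Conversely, if $\langle h_\alpha : \alpha < \lambda\rangle$ satisfies $h_\alpha \in \de$ and $\bigcap_{\alpha \in \sigma} h_\alpha \subseteq B_\sigma$ for all $\sigma \in \lao$, then after shrinking each $h_\alpha$ inside a member of a regularizing family (so that $\{\alpha : t \in h_\alpha\}$ is finite for each $t$) we may choose $c[t] \in M$ realizing the finite, consistent set $\{\psi_\alpha(x; \overline{a}_\alpha[t]) : t \in h_\alpha\}$, and then $c = \langle c[t]\rangle/\de$ realizes $p$. So it suffices to produce such an $\langle h_\alpha\rangle$.

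\emph{Patching per-formula solutions.} Enumerate the formulas of $T$ (in the relevant context) as $\langle \vp_n : n < \omega\rangle$; let $n_\alpha$ be the index of $\psi_\alpha$, and for $N < \omega$ let $p^{\le N}$ be the subtype of $p$ consisting of instances of $\vp_0, \dots, \vp_N$. Coding $\{\vp_0, \dots, \vp_N\}$ into a single formula and applying hypothesis $(2)$ and the dictionary to $p^{\le N}$, we obtain a multiplicative refinement $\langle h^{\le N}_\alpha : \alpha < \lambda\rangle$ of the map $\sigma \mapsto B^{\le N}_\sigma := \{t : M \models (\exists x) \bigwedge_{\alpha \in \sigma,\, n_\alpha \le N} \psi_\alpha(x; \overline{a}_\alpha[t])\}$; note $B^{\le 0}_\sigma \supseteq B^{\le 1}_\sigma \supseteq \cdots$ and $\bigcap_N B^{\le N}_\sigma = B_\sigma$, with the sequence constant once $N \ge \max_{\alpha\in\sigma} n_\alpha$. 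Fixing a regularizing family $\langle X_\alpha : \alpha < \lambda\rangle$ aligned so that ``$\psi_\alpha$ belongs to $X_\alpha$'', and writing $u(t) = \{\alpha : t \in X_\alpha\}$ (finite), we then define $\langle h_\alpha\rangle$ coordinatewise so that membership of $t$ in $h_\alpha$ is governed only by those $h^{\le N}$ with $N$ below the cutoff $\max\{n_\beta : \beta \in u(t)\}$ --- hence by only finitely many of the $h^{\le N}$ at each $t$ --- and check that $h_\alpha \in \de$, $h_\alpha \subseteq X_\alpha$, and $\bigcap_{\alpha\in\sigma} h_\alpha \subseteq B_\sigma$, which by the dictionary completes the proof.

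The step I expect to be the main obstacle is exactly the verification just mentioned. The cutoff $\max\{n_\beta : \beta \in u(t)\}$ is finite for each $t$ but unbounded as $t$ varies --- in fact $\{t : \operatorname{cutoff}(t) > N\} \in \de$ for every $N$ --- so a naive patch of the $h^{\le N}$ can fall out of $\de$, the usual danger that a countable union of $\de$-null sets may be $\de$-large. Overcoming this requires choosing the refinements $\langle h^{\le N}\rangle$ coherently (so that the finite intersections consulted at a coordinate vary monotonically in $N$) and organizing the coordinatewise definition of $h_\alpha$ so that both its $\de$-largeness and the inclusion $\bigcap_{\alpha\in\sigma} h_\alpha \subseteq B_\sigma$ are witnessed at a single level; this is where regularity of $\de$ and the countability of $T$ are essential, and I expect it to be the bulk of the argument. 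If convenient, Theorem \ref{backandforth} may be used first to replace $M$ by a more amenable index model.
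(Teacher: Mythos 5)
The paper does not contain its own proof of this Fact; it is quoted from \cite{mm1} (Theorem 12), and the body of the paper just uses it as a black box (with the remark before Corollary \ref{phi-types-general} pointing to the argument in \cite{mm1}~\S3). So there is no in-paper argument to compare against; I can only assess your proposal on its own terms.

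Your $(1)\Rightarrow(2)$ direction and your ``dictionary'' between realizing a type and multiplicatively refining its distribution are both correct and standard, and decomposing the type by finite sets of formulas (coded into a single formula $\Phi_N$) is the right strategy. But the argument is not finished, and the hole you flag --- the patching --- is not a routine verification; it is exactly the hard part, and the ingredients you name (coherence of the $h^{\le N}$ plus a coordinatewise diagonal) do not suffice as stated. Concretely, suppose you arrange coherence $h^{\le N+1}_\alpha \subseteq h^{\le N}_\alpha$ (which is easy: intersect the freshly obtained refinement with the previous one), and even arrange the further ``stability'' property that $h^{\le N+1}_\alpha$ and $h^{\le N}_\alpha$ agree on $\{t : \operatorname{cutoff}(t)\le N\}$. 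Then the natural diagonal $h_\alpha := \{t\in X_\alpha : t\in h^{\le \operatorname{cutoff}(t)}_\alpha\}$ does give $\bigcap_{\alpha\in\sigma}h_\alpha\subseteq B_\sigma$, but its complement in $X_\alpha$ is $\bigcup_N\bigl(\{t\in X_\alpha:\operatorname{cutoff}(t)=N\}\setminus h^{\le N}_\alpha\bigr)$, a countable union of $\de$-null sets, and there is nothing so far preventing this union from being $\de$-large: each level $\{t:\operatorname{cutoff}(t)=N\}$ is itself $\de$-null (when the type genuinely uses infinitely many formulas), so $h_\alpha \Delta h^{\le N}_\alpha \subseteq \{t:\operatorname{cutoff}(t)>N\}$ gives no information about $h_\alpha$ modulo $\de$. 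To make $h_\alpha\in\de$ one must exercise genuine control over how the hypothesis~(2) produces the refinement at each stage relative to the regularizing family, not merely postprocess whatever $h^{\le N}$ one is handed; that control is the content of the cited argument and is missing here. I would therefore regard the proposal as a correct reduction plus a correctly identified obstruction, but not a proof.
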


In this paper, we focus on countable theories for transparency, but we also record in Corollary \ref{phi-types-general} that the argument of 
\cite{mm1} \S 3 extends to larger languages; the only change is the limit stages, which follow from the stronger hypothesis that $\lcf(|T|, \de) \geq \lambda^+$.

\begin{cor} \label{phi-types-general}
Let $T$ be a complete first-order theory. Suppose $\de$ is a regular ultrafilter on $I$ which is $|T|^+$-good, or just
such that $\lcf(|T|, \de) \geq \lambda^+$. 
Then for any $M^I/\de$, the following are equivalent:
\begin{enumerate}
\item $M^I/\de$ is $\lambda^+$-saturated.
\item $M^I/\de$ realizes all $\vp$-types over sets of size $\leq \lambda$, for all formulas $\vp$ in the language of $T$.
\end{enumerate}
\end{cor}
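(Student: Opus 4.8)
The plan is to follow the proof of Fact \ref{phi-types} (i.e. \cite{mm1}, Theorem 12) essentially verbatim, isolating the one place where countability of $T$ is used and checking that the hypothesis $\lcf(|T|, \de) \geq \lambda^+$ suffices there. The implication $(1)\Rightarrow(2)$ is trivial, since $\vp$-types are in particular types, so all the work is in $(2)\Rightarrow(1)$. Assume $M^I/\de$ realizes all $\vp$-types over sets of size $\leq\lambda$ for every formula $\vp$, and let $p(x)$ be a complete type over a set $A \subseteq M^I/\de$ with $|A|\leq\lambda$; we must realize $p$.

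First I would recall the structure of the argument from \cite{mm1} \S 3: one enumerates the formulas of the language and builds an increasing chain of partial realizations, at each successor stage using the hypothesis to realize the relevant $\vp$-type (with parameters the previously-constructed partial solution together with $A$), and at limit stages one must amalgamate the partial realizations built so far. The successor stages go through unchanged: at each such stage we are realizing a single $\vp$-type over a set of size $\leq \lambda + |T| \cdot \lambda = \lambda$ (here using $|T|\leq\lambda$, which is built into the statement since $\lcf(|T|,\de)\geq\lambda^+$ forces $|T|\leq\lambda$, or we simply assume $|T|\leq\lambda$ as in the intended application), so hypothesis (2) applies. The point where countability entered in \cite{mm1} is the limit stages of the induction on the (now possibly uncountably many) formulas of $T$: there one needs that a decreasing-mod-$\de$ chain of length $<|T|^+$ of nonempty sets, arising from the partial solutions, still has nonempty intersection in the ultrapower, which is exactly the statement that the coinitiality $\lcf(|T|,\de)$ of $|T|$ in $\omega^I/\de$ is $\geq \lambda^+$ — equivalently that no such chain is "too short" to be filled in. This is where I would invoke the hypothesis $\lcf(|T|,\de)\geq\lambda^+$; note a $|T|^+$-good regular ultrafilter has this property automatically (goodness gives multiplicative, hence in particular decreasing, refinements, and regularity plus $|T|^+$-goodness bounds the coinitiality below).

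Concretely, at a limit stage $\delta < |T|^+$ we have constructed elements $\langle c_i : i < \delta\rangle$ of $M^I/\de$ forming a coherent system of partial solutions, and we need a single element extending all of them; the set of $t \in I$ for which the $t$-th coordinates are jointly consistent is in $\de$ (by \los{} and coherence), and to patch them into one element of the ultrapower one runs the standard argument that reduces to finding a function $h \in \omega^I$ below, mod $\de$, a prescribed decreasing $\delta$-indexed sequence of elements of $\omega^I/\de$ — possible exactly when $\cf(\delta) \neq \lcf(|T|,\de)$ fails to obstruct, which is guaranteed by $\lcf(|T|,\de)\geq\lambda^+ > \delta$ for all relevant $\delta$ (all of which have cofinality $\leq |T| \leq \lambda$). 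I would spell this out by citing the limit-stage lemma of \cite{mm1} \S 3 and substituting the weaker hypothesis for countability.

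The main obstacle is purely expository rather than mathematical: one must verify that in \cite{mm1} the \emph{only} use of $|T|=\aleph_0$ is at these limit stages and in the bookkeeping that keeps every intermediate parameter set of size $\leq\lambda$ (the latter being immediate once $|T|\leq\lambda$). Granting that audit, the corollary follows. I would therefore keep the write-up short: state that $(1)\Rightarrow(2)$ is immediate, that for $(2)\Rightarrow(1)$ the proof of \cite{mm1} Theorem 12 applies with the single modification that limit stages of the induction — previously handled using countability of the language — are now handled using $\lcf(|T|,\de)\geq\lambda^+$, and observe that a $|T|^+$-good regular $\de$ satisfies this inequality.
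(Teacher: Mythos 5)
Your proposal is correct and takes essentially the same approach as the paper: the paper gives no separate proof of Corollary \ref{phi-types-general} but simply remarks, in the sentence preceding it, that the argument of \cite{mm1} \S 3 extends to larger languages once the limit stages of the induction are handled using $\lcf(|T|,\de)\geq\lambda^+$ in place of countability. Your write-up fills in exactly that sketch, including the observation that $|T|^+$-goodness yields the required lower cofinality bound; the only small caveat is that the parenthetical claim that $\lcf(|T|,\de)\geq\lambda^+$ by itself forces $|T|\leq\lambda$ is not obviously justified, but you hedge this appropriately and it does not affect the argument in the intended range.
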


\begin{defn} \emph{(Distributions)} \label{d:dist}
Let $T$ be a countable complete first-order theory, $M \models T$, $\de$ a regular ultrafilter on $I$, $|I| = \lambda$, $N = M^\lambda/\de$.
Let $p(x) = \{ \vp_i(x;a_i) : i < \lambda \}$ be a consistent partial type in the ultrapower $N$. Then a \emph{distribution} 
of $p$ is a map $d : \fss(\lambda) \rightarrow \mcp(I)$ which satisfies:
\begin{enumerate}
\item $\rn(d) \subseteq \de$
\item For each $\sigma \in [\lambda]^{<\aleph_0}$, 
$d(\sigma) \subseteq \{ t \in I : M \models \exists x \bigwedge \{ \vp_i(x;a_i[t]) : i \in \sigma \} \}$. Informally speaking,
$d$ refines the \Los map. 
\item $d$ is monotonic, meaning that for $\sigma, \tau \in [\lambda]^{<\aleph_0}$, $\sigma \subseteq \tau$ implies
$d(\sigma) \supseteq d(\tau)$
\item For each $t \in I$, $| \{ \sigma \in \lao : t \in d(\sigma) \} | < \aleph_0$. Note that in the presence of $(1)$, this implies
the range of $d$ is a regularizing family.
\end{enumerate}
A map satisfying $(2),(3),(4)$ is called a \emph{weak} distribution.
A distrubution satisfying the additional conditions of Obs. $\ref{dist-ac}$ is called \emph{accurate}. 
\end{defn}

\begin{conv}
We will often identify a distribution or weak distribution $d$ with the image of $\lao$ under $d$, i.e. with a sequence of the form 
$\langle A_u : u \in \lao \rangle \subseteq \mcp(I)$.
\end{conv}

\begin{obs} \label{dist-ac}
Let $M, N, T, I, p$ be as in Definition \ref{d:dist}. If $p$ has a (weak) distribution, we may choose a (weak) distribution $d$ of $p$ which is \emph{accurate}, 
where this means that in addition: for each $t \in I$ and $\sigma \subseteq \{ i < \lambda : t \in d(i) \}$, 
\[ M \models \exists x \bigwedge\{ \vp(x;a_i) : i \in \sigma \}  \iff t \in d(\sigma) \]
\end{obs}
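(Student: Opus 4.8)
The plan is to construct the accurate (weak) distribution $d$ explicitly from a given (weak) distribution $d_0$ of the type $p = \{ \vp_i(x;a_i) : i < \lambda \}$, by ``correcting'' the values of $d_0$ so that they record precisely which pointwise finite conjunctions are consistent in $M$. For $t \in I$ write $w_t = \{ i < \lambda : t \in d_0(i) \}$, a finite set by clause $(4)$ of Definition \ref{d:dist}. I would then put $d(\emptyset) = I$ and, for $\sigma \in \lao$ with $\sigma \neq \emptyset$,
\[ d(\sigma) = \Big\{ t \in I : \sigma \subseteq w_t \ \text{ and }\ M \models \exists x \bigwedge_{i \in \sigma} \vp_i(x;a_i[t]) \Big\}. \]
Since $d_0$ refines the \Los map (clause $(2)$), we get $d(\{i\}) = d_0(\{i\})$ for each $i$, hence also $w_t = \{ i < \lambda : t \in d(i) \}$; this equality is what aligns the accuracy requirement, which quantifies over $\sigma \subseteq \{ i : t \in d(i) \}$, with the definition of $d$.

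Next I would check that $d$ is again a (weak) distribution. Clause $(2)$ is immediate from the definition. Monotonicity, clause $(3)$, holds because $\sigma \subseteq \tau$ forces both $\{ t : \tau \subseteq w_t \} \subseteq \{ t : \sigma \subseteq w_t \}$ and, by dropping conjuncts, $M \models \exists x \bigwedge_{i \in \tau} \vp_i(x;a_i[t])$ implies $M \models \exists x \bigwedge_{i \in \sigma} \vp_i(x;a_i[t])$. Clause $(4)$ holds since for $\sigma \neq \emptyset$, membership $t \in d(\sigma)$ forces $\sigma \subseteq w_t$, so $\{ \sigma : t \in d(\sigma) \} \subseteq \mcp(w_t) \cup \{ \emptyset \}$ is finite. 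For clause $(1)$, needed only when $d_0$ is a genuine distribution, observe that $d(\sigma) = \bigcap_{i \in \sigma} d_0(i) \cap \{ t \in I : M \models \exists x \bigwedge_{i \in \sigma} \vp_i(x;a_i[t]) \}$; the first factor lies in $\de$ as a finite intersection of members of $\de$, and the second lies in $\de$ by \Lost theorem, since $p$ is consistent in $N$ and so its finite subtype $\{ \vp_i(x;a_i) : i \in \sigma \}$ is realized there.

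Finally, accuracy is read off directly: for $t \in I$ and $\emptyset \neq \sigma \subseteq \{ i : t \in d(i) \} = w_t$, the side condition ``$\sigma \subseteq w_t$'' in the definition of $d(\sigma)$ is automatic, so $t \in d(\sigma)$ iff $M \models \exists x \bigwedge_{i \in \sigma} \vp_i(x;a_i[t])$, which is exactly the asserted equivalence, and the case $\sigma = \emptyset$ is trivial since $d(\emptyset) = I$. I do not expect a genuine obstacle here, as the argument is essentially bookkeeping; the two points needing a moment's care are the step from consistency of the type $p$ to largeness of the existential sets (the only place the hypothesis that $p$ is a \emph{consistent type}, rather than an arbitrary family of formulas, is used), and the observation that redefining $d$ on the singletons is harmless precisely because $d_0$ already refines the \Los map.
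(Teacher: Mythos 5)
Your proposal is correct and takes essentially the same approach as the paper's terse one-line proof: define $d(\sigma)$ to record exactly which conjunctions are consistent at each index, restricted to $\sigma \subseteq w_t$ so that finiteness (clause (4)) is preserved, with \Los' theorem giving clause (1) in the strong case. The only cosmetic difference is that you start from the given $d_0$ and keep its singleton values, whereas the paper starts directly from the \Los map intersected with a regularizing family; since $d_0$ already refines the \Los map, these amount to the same thing.
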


\begin{proof}
Begin with a partial map $d: \lambda \rightarrow \de$ given by the \Los map, intersect with some regularizing family, and inductively extend $\de$ to all finite subsets of $\lambda$ 
to reflect the actual pattern of incidence in each index model. 
\end{proof}

\begin{lemma} \emph{(Transfer lemma)} \label{l:trans}
Suppose that we have the following data:
\begin{enumerate}
\item $\de$ is a regular, $\lambda^+$-excellent filter on $I$
\item $\ba$ is a Boolean algebra 
\item $\jj : \mcp(I) \rightarrow \ba$ is a surjective homomorphism with $\de = \jj^{-1}(\{ 1_\ba \})$
\item $\vp = \vp(x,y)$ is a formula of $T$.
\end{enumerate}

Then the following two statements are true. 
\br
\begin{enumerate}
\item[(A)] Let $\langle A_u : u \in \lao \rangle \subseteq \mcp(I)$ be the image of an accurate weak distribution of some $\vp$-type. Then 
$\langle \jj(A_u) : u \in \lao \rangle \subseteq \ba$ is a $(\lambda, \ba, T, \vp)$-possibility pattern. 
\br
\item[(B)] Let $\langle \ma_u : u \in \lao \rangle$ be a $(\lambda, \ba, T, \vp)$-possibility pattern. Then there exists a sequence $\overline{A} = \langle A_u : u \in \lao \rangle \subseteq \mcp(I)$
such that $\jj(A_u) = \ma_u$ for each $u \in \lao$ and such that $\overline{A}$ is an accurate weak distribution of some $\vp$-type. 
\end{enumerate}
\end{lemma}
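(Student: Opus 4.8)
The plan is to prove the two directions separately, using the hypothesis of excellence in direction (B) and a direct (unconditional) check in direction (A). For (A), I would start with an accurate weak distribution $\langle A_u : u \in \lao \rangle$ of a $\vp$-type $p(x) = \{ \vp(x;a_i) : i < \lambda \}$ in $M^I/\de$, and let $\ma_u = \jj(A_u)$. Conditions (1)--(3) of Definition \ref{d:poss} are immediate: $\ma_u \in \ba^+$ because $A_u \in \de^+$ (it is a member of a regularizing family, hence nonempty mod $\de$), and monotonicity of $\overline{\ma}$ follows from monotonicity of the distribution together with $\jj$ being a homomorphism. The real content is condition (4). Given $u_* \in \lao$ and $\mb \in \ba^+$ with $\mb$ deciding each $\ma_u$ ($u \subseteq u_*$) and lying below each $\ma_{\{\alpha\}}$ ($\alpha \in u_*$), I would pull $\mb$ back to some $B \in \mcp(I)$ with $\jj(B) = \mb$; then $B \in \de^+$, and for each $u \subseteq u_*$ either $B \subseteq A_u \bmod \de$ or $B \cap A_u = \emptyset \bmod \de$, while $B \subseteq A_{\{\alpha\}} \bmod \de$ for $\alpha \in u_*$. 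Now pick any $t \in B$ that lies in $A_{\{\alpha\}}$ for every $\alpha \in u_*$ and in the ``$\mod \de$'' classes correctly --- more precisely, since $u_*$ is finite and $\de$ is a filter, the set of such $t$ is still in $\de^+$, hence nonempty; fix one such $t$ with $t \in \bigcap_{\alpha \in u_*} A_{\{\alpha\}}$, and set $a_\alpha := a_\alpha[t] \in M$. By \emph{accuracy} of the distribution (Observation \ref{dist-ac}), for $\sigma \subseteq \{ i : t \in A_{\{i\}} \} \supseteq u_*$ we have $M \models \exists x \bigwedge_{\alpha \in \sigma} \vp(x;a_\alpha[t])$ iff $t \in A_\sigma$. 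The point is to match ``$t \in A_u$'' with ``$\mb \leq \ma_u$'': if $\mb \leq \ma_u = \jj(A_u)$ then $B \subseteq A_u \bmod \de$ so (by choosing $t$ in this $\de$-large set) $t \in A_u$; if $\mb \not\leq \ma_u$ then $\mb \leq 1 - \ma_u$ (since $\mb$ decides $\ma_u$), so $B \cap A_u = \emptyset \bmod \de$ and $t \notin A_u$. Feeding this through accuracy gives exactly condition (4), so $\langle \ma_u \rangle$ is a possibility pattern. This direction uses only regularity, not excellence.

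For (B), the hard direction, I would start with a possibility pattern $\overline{\ma} = \langle \ma_u : u \in \lao \rangle$ in $\ba$ and need to manufacture an accurate weak distribution $\overline{A}$ over $\mcp(I)$ lifting it through $\jj$. First lift each $\ma_u$ to some $A^0_u \in \mcp(I)$ with $\jj(A^0_u) = \ma_u$ (possible since $\jj$ is surjective). The $A^0_u$ need not be monotonic, so replace $A^0_u$ by $A^1_u := \bigcap_{v \subseteq u} A^0_v$; since $\jj$ is a homomorphism and $\overline{\ma}$ is monotonic, $\jj(A^1_u) = \bigcap_{v \subseteq u} \ma_v = \ma_u$, and $\overline{A}^1$ is genuinely monotonic. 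Next I would intersect with a fixed regularizing family to arrange property (4) of Definition \ref{d:dist}, at the cost of possibly shrinking mod $\de$ --- but this is exactly where \emph{excellence} enters: applying Definition \ref{d:excellent} (or the refined Claim \ref{c:some-examples}) to $\overline{A}^1$ yields an excellent refinement $\overline{B}$ with $B_u \subseteq A^1_u$, $B_u = A^1_u \bmod \de$ (so still $\jj(B_u) = \ma_u$), and crucially $\overline{B}$ respects all the distinguished Boolean terms. The role of the distinguished terms is to guarantee that whenever an intersection of the $A^1_u$'s is empty mod $\de$ it becomes \emph{actually} empty in $\overline{B}$, which is what lets me read off the genuine (not just mod-$\de$) incidence pattern needed for a weak distribution, and then finally intersect with a regularizing family while keeping the needed equalities. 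Once $\overline{B}$ is in hand, I would use property (4) of the possibility pattern to extract, for each $t$ lying in the appropriate $B_{\{\alpha\}}$'s, a model $M_t \models T$ and elements $a_\alpha[t]$ witnessing the $\vp$-incidence pattern dictated by which $B_u$ contain $t$; assembling these across $t \in I$ via $M := \prod M_t / \de$ (using \los, and the back-and-forth Theorem \ref{backandforth} to pass to a single $M \models T$) produces parameters $a_i \in M^I/\de$ whose $\vp$-type has $\overline{B}$ as an accurate weak distribution.

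The main obstacle I anticipate is the bookkeeping in direction (B) to ensure simultaneously that (i) the lifted sequence is monotonic, (ii) it is a genuine weak distribution in the sense of Definition \ref{d:dist}(2) --- i.e. $B_\sigma$ is contained in the \los\ set for $\sigma$ --- and (iii) accuracy in the sense of Observation \ref{dist-ac} holds, all while (iv) not disturbing $\jj(B_u) = \ma_u$. The tension is that naively arranging (ii) and (iii) requires passing to actual equalities and emptiness of intersections, not merely mod-$\de$ ones, and that is precisely the work done by excellence: the distinguished terms $\Lambda_{\de,\overline{A}|_u}$ are exactly designed so that ``$\sigma$ evaluates to $\emptyset \bmod \de$'' upgrades to ``$\sigma$ evaluates to $\emptyset$'' in the refinement, which is what converts the mod-$\de$ combinatorics of $\overline{\ma}$ into the on-the-nose combinatorics of a distribution. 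I would also need to double-check that the condition ``$\mb$ decides each $\ma_u$ and lies below each $\ma_{\{\alpha\}}$'' in Definition \ref{d:poss}(4) corresponds, after pullback, to the statement that some index $t$ realizes a full consistent branch of the incidence pattern --- the match between ``ultrafilters on $\overline{\ma}$'' on the Boolean side and ``indices $t$'' on the $\mcp(I)$ side is the conceptual heart, and I expect it to be the step most prone to subtle errors about which direction of the biconditional in \ref{d:poss}(4) is being used.
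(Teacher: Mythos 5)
Your direction (A) is essentially the paper's argument: lift $\mb$ to $B\in\de^+$, note that for the partition of $\mcp(u_*)$ determined by whether $\mb\leq\ma_u$ or $\mb\leq 1-\ma_u$ one gets $B\cap\bigl(\bigcap_{u\in\sigma}A_u\setminus\bigcup_{v\in\tau}A_v\bigr)\neq\emptyset\bmod\de$, pick $t$ in this set, and read off the pattern by accuracy. That matches the paper.

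Direction (B) has the right overall shape (lift through $\jj$, apply excellence to get $\overline{B}$, build parameters index-by-index, assemble), but the step you flag as ``the conceptual heart'' is exactly the step the proposal leaves un-executed, and it is not a formality. Concretely: to invoke Definition \ref{d:poss}(4) for a given $t$ and a finite $u_*\subseteq\uu_t$ you need to \emph{produce} a nonzero $\mb\in\ba$ deciding all $\ma_u$ for $u\subseteq u_*$ in the pattern $t$ exhibits, and this does not come from ``pulling back $t$'' — the image $\jj(\{t\})$ is $0_\ba$ whenever $\de$ is nonprincipal, so there is no direct pullback. The paper instead runs a contradiction: form the Boolean term $\sigma(\overline{x}_{\mcp(u_*)})=\bigcap\{x_w : t\in B_w\}\cap\bigcap\{1-x_v : t\notin B_v\}$. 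If no nonzero $\mb\leq\sigma(\overline{\ma}|_{\mcp(u_*)})$ existed, then $\sigma(\overline{\ma}|_{\mcp(u_*)})=0_\ba$, and one checks $\sigma\in\Lambda_{\de,\overline{\ma}|_{u_*}}$ by verifying it vanishes on every truncation in $N(\overline{\ma})$ — here the essential point is that all singletons $\{\epsilon\}\subseteq u_*$ appear among the $w$'s with $t\in B_w$ (since $u_*\subseteq\uu_t$), so zeroing out any proper $w_0\subsetneq u_*$ kills some singleton factor. Excellence then forces $\sigma(\overline{B}|_{\mcp(u_*)})=\emptyset$, contradicting $t\in\sigma(\overline{B}|_{\mcp(u_*)})$. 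Without that argument your proposal never supplies the $\mb$ that Definition \ref{d:poss}(4) requires, so the pointwise construction of $c_{t,\epsilon}$ doesn't get off the ground.

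Two smaller remarks. Your explicit monotonization $A^1_u=\bigcap_{v\subseteq u}A^0_v$ is harmless but unnecessary: since $\overline{\ma}$ is monotonic, each term $x_v\setminus x_u$ (for $u\subseteq v$) already lies in $\Lambda_{\de,\overline{A}|_v}$, so the excellent refinement $\overline{B}$ is automatically monotonic. On the other hand, your step of intersecting with a fixed regularizing family from $\de$ to secure condition (4) of Definition \ref{d:dist} is a sensible addition — the excellent refinement alone does not obviously make $\overline{B}$ a regularizing-style family (e.g.\ nothing prevents $\ma_{\{\epsilon\}}=1_\ba$ for all $\epsilon$, in which case each $B_{\{\epsilon\}}\in\de$), and since the regularizing sets lie in $\de$ the intersection does not disturb $\jj(B_u)=\ma_u$. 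So that piece of your plan is fine and arguably fills an implicit step.
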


\br
\begin{proof} 
(A) Let $\overline{A} = \langle A_u : u \in \lao \rangle$ to be an accurate weak distribution of a given $\vp$-type $p$.
Let $\overline{\ma} = \langle \ma_u : u \in \lao \rangle$ be a sequence of elements of $\ba^+$ given by $\ma_u = \jj(A_u)$. 

We check that $\langle \ma_u : u \in \lao \rangle$ is a $(\lambda, \ba, T, \vp)$-possibility pattern. 
Conditions (a)-(c) of Definition \ref{d:poss} follow from the definitions of $\overline{A}$ and $\jj$. Recall that for condition
(d) we need to check that if $u_* \in \lao$ and $\mb \in \ba^+$ satisfies
\[ \left( u \subseteq u_* \implies \left( ( \mb \leq \ma_u )  ~\lor~ ( \mb\leq 1 - \ma_u ) \right) \right) \land 
\left( \alpha \in u_* \implies \mb\leq \ma_{\{\alpha\}} \right) \]
then we can find a model $M \models T$ and $a_i \in M$ for $i \in u_*$ such that for every $u \subseteq u_*$,
\[ M \models (\exists x)\bigwedge_{i \in u} \vp(x;a_i) ~~ \mbox{iff} ~~ \mb \leq \ma_u \] 
Let such $u_*$ and $\mb$ be given. Choose $B \in \mcp(I)$ such that $\jj(B) = \mb$. 
Then $B \neq \emptyset \mod \de$ since $\jj$ is a homomorphism. Moreover, 
if $\sigma, \tau$ partition $\mcp(u_*)$ such that
\[ \ba \models \bigwedge_{u \in \sigma} \mb \leq \ma_u \land \bigwedge_{v \in \tau} \mb \leq 1 - \ma_v \]
then we likewise have that 
\[   B \cap \left(  \bigcap_{u \in \sigma} A_u \setminus \bigcup_{v \in \tau} A_v   \right) \neq \emptyset \mod \de \]
Choose any $t$ belonging to this nonempty set. Then by accuracy of $\overline{A}$, $\{ a_i[t] : i \in u_* \}$ provide the desired witnesses.
Thus $\langle \ma_u : u \in \lao \rangle$ is a $(\lambda, \ba, T, \vp)$-possibility pattern, as desired.

\br
\br
(B) Let $\overline{\ma} = \langle \ma_u : u \in \lao \rangle$ be a $(\lambda, \ba, T, \vp)$-possibility.

First, for each $u \in \lao$ we may choose $A_u \subseteq I$ such that $\jj(A_u) = \ma_u$ by surjectivity of $\jj$. Apply
Definition \ref{d:excellent} to $\overline{A} = \langle A_u : u \in \lao \rangle$ to obtain an excellent refinement 
$\overline{B} = \langle B_u : u \in \lao \rangle$. Note that as $A_u = B_u \mod \de$, $\jj(A_u) = \jj(B_u)$ by definition of $\jj$. 
Now for each $t \in I$ let $\uu_t = \{ \epsilon < \lambda : t \in B_{\{\epsilon \}} \}$. Let us verify that 
for each $t \in I$ we can find $c_{t,\epsilon}$ in $M$ for $\epsilon \in \uu_t$ such that for every finite $u \subseteq \uu_t$,
we have 
\[ M \models \exists x \bigwedge \{ \vp(x;c_{t,\epsilon}) : \epsilon \in u \}  ~~ \iff ~~ t \in B_u \]
As $M$ is $\lambda^+$-saturated it suffices to prove this for fixed $t$ and $u_* = \{ \epsilon_0, \dots \epsilon_{n-1} \} \subseteq \uu_t$ finite.
Let $w_0, \dots w_k$ list the subsets $u$ of $u_*$ such that $t \in B_{u}$, and let $v_0, \dots v_m$ list the subsets
$u$ of $u_*$ such that $t \notin B_{u}$. We would like to find elements $c_0, \dots c_{n-1}$ of $M$ such that:
\[ M \models \bigwedge_{i \leq k} \left( \exists x \bigwedge_{\ell \in w_i} \vp(x;c_\ell)\right) 
\land \bigwedge_{j\leq m} \left( \neg \exists x \bigwedge_{\ell \in v_j} \vp(x,c_j) \right) \]

To do this we verify that we can always find $\mb \in \ba \setminus \{ 0 \}$ such that
$\mb \leq \bigcap_{i \leq k} \mb_{w_i}$ while $\mb \cap \left(\bigcup_{j \leq m} \mb_{v_j}\right) = 0$.
Suppose for a contradiction that there is no such $\mb$.
We have the corresponding Boolean term $\sigma(x_{\mcp(u_*)}) = \bigcap_i x_{u_i} \cap \bigcap_j 1 - x_{v_j}$.
Let us check whether $\sigma \in \Lambda_{\de, \overline{\mb}}$. If $u = u_*$, the term is $0_\ba$ by our assumption for a contradiction. 
If $u \subseteq u_*$ misses some $w_i$, then the expression is clearly $0_\ba$. 
But recall that the list of $w_i$ includes all singleton sets, by Definition \ref{d:poss}. So if
$u \subsetneq u_*$ we obtain $0_\ba$ as well. This shows that $\sigma \in \Lambda_{\de, \overline{\mb}}$ and thus
that $\sigma(\overline{B}|_{\mcp(u_*)}) = 0$ by the choice of $\overline{B}$. In other words, the supposed pattern does not occur for $u_*$
at any index $t$ for which $u_* \subseteq \uu_t$, contradiction. 
Thus we may always find $\mb \in \ba$ witnessing the pattern under consideration, and 
thus apply the definition of ``possibility'' \ref{d:poss} to choose parameters as desired.

Thus for each $t \in I$ we are able to choose $\{ c_{t,\epsilon} : \epsilon \in \uu_t \}$ as described. For $\epsilon \notin \uu_t$, 
let $c_{t, \epsilon}$ be arbitrary. Then for each $\epsilon < \lambda$ let $c_\epsilon = \prod_{t \in I} c_{t,\epsilon} /\de_1$. 
The type $p(x) = \{ \vp(x,c_\epsilon) : \epsilon < \lambda \}$ has accurate weak distribution $\langle B_u : u \in \lao \rangle$,
which completes the proof.
\end{proof}

\begin{theorem} \emph{(Separation of variables)} \label{t:separation}
Suppose that we have the following data:
\begin{enumerate}
\item $\de$ is a regular, $\lambda^+$-excellent filter on $I$
\item $\de_1$ is an ultrafilter on $I$ extending $\de$, and is ${|T|}^+$-good
\item $\ba$ is a Boolean algebra 
\item $\jj : \mcp(I) \rightarrow \ba$ is a surjective homomorphism with $\de = \jj^{-1}(\{ 1_\ba \})$
\item $\de_* = \{ \mb \in \ba : ~\mbox{if} ~\jj(A) = \mb ~\mbox{then}~ A \in \de_1 \}$
\end{enumerate}
Then the following are equivalent:
\begin{itemize}
\item[(A)] $\de_*$ is $(\lambda, \ba, T)$-moral, i.e. moral for each formula $\vp$ of $T$.
\item[(B)] For any $\lambda^+$-saturated model $M \models T$, $M^\lambda/\de_1$ is $\lambda^+$-saturated.
\end{itemize}
\end{theorem}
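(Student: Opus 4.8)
The plan is to prove $(B)\Rightarrow(A)$ and $(A)\Rightarrow(B)$ separately, in each case using the Transfer Lemma \ref{l:trans} to move between possibility patterns in $\ba$ and accurate weak distributions in $\mcp(I)$, and using Fact \ref{phi-types} together with hypothesis (2) to reduce $\lambda^+$-saturation to the realization of $\vp$-types for each fixed $\vp$. Throughout I would freely identify $\mb\in\de_*$ with the ``mod $\de_1$-class'' of any $A\in\jj^{-1}(\mb)$; note that by the definition of $\de_*$ in clause (5), $\mb\in\de_*$ iff every (equivalently, some) preimage lies in $\de_1$, and that $\jj$ descends to a surjection $\mcp(I)/\de_1\twoheadrightarrow\ba/\de_*$ — in fact this is an isomorphism because $\de=\jj^{-1}(1_\ba)\subseteq\de_1$, which is exactly what makes the two morality/saturation statements match up.

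\emph{$(B)\Rightarrow(A)$.} Fix a formula $\vp$ and a $(\lambda,\ba,T,\vp)$-possibility $\overline{\ma}=\langle\ma_u:u\in\lao\rangle$ with all $\ma_u\in\de_*$ and $\overline{\ma}$ monotonic. I would apply part (B) of the Transfer Lemma to obtain $\overline{A}=\langle A_u:u\in\lao\rangle\subseteq\mcp(I)$ with $\jj(A_u)=\ma_u$ which is an accurate weak distribution of some $\vp$-type $p(x)=\{\vp(x;c_\epsilon):\epsilon<\lambda\}$ in $N=M^\lambda/\de_1$, where $M$ is $\lambda^+$-saturated. Since $\ma_u\in\de_*$ we have $A_u\in\de_1$, so $\overline{A}$ is (after intersecting with a regularizing family, which does not disturb accuracy) an honest distribution of $p$. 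By hypothesis (B), $N$ is $\lambda^+$-saturated, so $p$ is realized, say by $a\in N$; then $D_u:=\{t\in I: M\models\vp(a[t];c_\epsilon[t])\text{ for all }\epsilon\in u\}\cap A_u$ gives a multiplicative refinement of $\overline{A}$ with each $D_u\in\de_1$. Pushing forward by $\jj$ and using that $\jj$ is a homomorphism, $\langle\jj(D_u):u\in\lao\rangle$ is a multiplicative $\de_*$-refinement of $\overline{\ma}$. Hence $\de_*$ is $(\lambda,\ba,T,\vp)$-moral, and as $\vp$ was arbitrary, $(\lambda,\ba,T)$-moral.

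\emph{$(A)\Rightarrow(B)$.} Fix a $\lambda^+$-saturated $M\models T$ and a $\vp$-type $p(x)=\{\vp(x;a_i):i<\lambda\}$ over a set of size $\leq\lambda$ in $N=M^\lambda/\de_1$; by Fact \ref{phi-types} (applicable since $\de_1$ is regular and, by (2), $|T|^+$-good) it suffices to realize every such $p$. Choose an accurate distribution $\langle A_u:u\in\lao\rangle$ of $p$ using Observation \ref{dist-ac}; by part (A) of the Transfer Lemma, $\langle\jj(A_u):u\in\lao\rangle$ is a $(\lambda,\ba,T,\vp)$-possibility pattern, and since $A_u\in\de$ — wait, $A_u\in\de_1$, hence $\jj(A_u)\in\de_*$ — it satisfies the hypotheses of $(\lambda,\ba,T,\vp)$-morality. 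Apply (A) to get a multiplicative $\de_*$-refinement $\langle\mb_u:u\in\lao\rangle$; lift each $\mb_u$ to some $B'_u\in\de_1$, then run the excellence of $\de$ (Definition \ref{d:excellent}, via Claim \ref{c:some-examples}(1)) to correct $\overline{B'}$ — or rather, correct a common preimage — to an \emph{honestly} multiplicative $\overline{B}=\langle B_u:u\in\lao\rangle$ with $B_u\subseteq A_u$ and $B_u=B'_u\bmod\de$. Then $\overline{B}$ is a multiplicative distribution of $p$ contained in the \Los\ map, so setting $a:=\prod_{t\in I}a[t]/\de_1$ where $a[t]$ realizes $\{\vp(x;a_i[t]):t\in B_{\{i\}}\}$ in $M$ (possible by saturation of $M$ and multiplicativity, exactly as at the end of the proof of Lemma \ref{l:trans}) realizes $p$ in $N$. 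Hence $N$ is $\lambda^+$-saturated.

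\emph{Main obstacle.} The delicate point is the passage in $(A)\Rightarrow(B)$ from the multiplicative $\de_*$-refinement $\overline{\mb}$ living in $\ba$ back to an \emph{actually} multiplicative refinement of the distribution in $\mcp(I)$: a naive preimage is only multiplicative mod $\de$, and this is precisely the gap that Remark 1 of the introduction warns about. The whole force of the excellence hypothesis (2) on $\de$ is what closes it — one must check that the Boolean term expressing multiplicativity lies in the relevant $\Lambda_{\de,\overline{A}|_u}$, which is the content of Claim \ref{c:some-examples}(1), so I would invoke that claim rather than redo the computation. A secondary point requiring care is bookkeeping: one must choose the preimages $B_u$ of $\mb_u$ coherently (e.g. first fix arbitrary preimages, then apply the excellent-refinement machinery to the whole $\lao$-indexed sequence at once) rather than independently, and one must confirm that the resulting $B_u$ still lie below $A_u$ and remain in $\de_1$ — both of which follow since $B_u\subseteq A_u$ and $B_u = B'_u \bmod \de$ with $B'_u\in\de_1$ and $\de\subseteq\de_1$.
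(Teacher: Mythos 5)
Your proposal is correct and follows essentially the same route as the paper's own proof: both directions pass through the Transfer Lemma \ref{l:trans}, reduce to $\vp$-types via Fact \ref{phi-types}, and close the $(A)\Rightarrow(B)$ direction by invoking excellence (Claim \ref{c:some-examples}(1)) to upgrade a sequence that is multiplicative $\bmod\ \de$ to one that is honestly multiplicative. The only place where the paper is a bit tighter than your writeup is the choice of preimages: rather than lifting each $\mb_u$ to an arbitrary $B'_u$ and then trying to ``correct,'' the paper directly chooses $B_u\subseteq A_u$ with $\jj(B_u)=\mb_u$ (possible because $\mb_u\leq\ma_u=\jj(A_u)$; replace an arbitrary preimage by its intersection with $A_u$), and only then applies the excellent refinement to this $\overline{B}$. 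Your ``Main obstacle'' paragraph does flag precisely this bookkeeping point, so there is no real gap — you should just fold that fix into the body of the argument rather than leaving it as an afterthought.
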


\begin{proof}
First we note that it suffices to replace the conclusion of (B) with ``$M^\lambda/\de_1$ is $\lambda^+$-saturated for $\vp$-types,
for all formulas $\vp$ of $T$,'' by Fact \ref{phi-types} (in the main case of interest for Keisler's order) 
or Corollary \ref{phi-types-general} (in general). Thus in what follows, we concentrate on $\vp$-types.
Note also that any regular ultrafilter is $\aleph_1$-good, thus (2) is always satisfied in the main case of interest, countable theories.

\br
$(A) \implies (B)$ 
Suppose that $\de_*$ is $(\lambda, \ba, T)$-moral, and we would like to show that $M^\lambda/\de_1$ is $\lambda^+$-saturated. 
Let $p = \{ \vp(x,a_i) : i < \lambda \}$ be the type in question, and let $\overline{A} = \langle A_u : u \in \lao \rangle$ be 
an accurate distribution of $p$, thus a sequence of elements of $\de_1$. It will suffice to show that $\overline{A}$ has a multiplicative refinement.

By (A) of the Transfer Lemma \ref{l:trans}, writing $\ma_u$ for $\jj(A_u)$, we have that
$\langle \ma_u : u \in \lao \rangle \subseteq \ba$ is a $(\lambda, \ba, T, \vp)$-possibility pattern. By 
hypothesis (5) each $\ma_u \in \de_*$. 

We had assumed that $\de_*$ is $(\lambda, \ba, T)$-moral, thus it contains a multiplicative refinement 
$\overline{\mb} = \langle \mb_u : u \in \lao \rangle$ of $\overline{\ma}$. 
Choose $\overline{B} = \langle B_u : u \in \lao \rangle$ so that $B_u \subseteq A_u$ and $\jj(B_u) = \mb_u$, 
for $u \in \lao$. Then $\overline{B}$ is a sequence of elements of $\de_1$ and is multiplicative modulo $\de$.
Applying excellence of $\de$, we may replace the sequence $\overline{B}$ with a 
$\lao$-indexed sequence $\overline{C}$ which refines $\overline{B}$, whose elements belong to $\de_1$, and which is truly multiplicative (by conditions
(1), (2), (3) of \ref{d:excellent}, respectively). A fortiori, $\overline{C}$ is a multiplicative refinement of $\overline{A}$,
which completes the proof. 

\br
\br
$(B) \implies (A)$
Suppose that $M^I/\de$ is $\lambda^+$-saturated and $\vp=\vp(x;y)$ is a formula in the language of $T$, recalling that
$\ell(y)$ need not be 1. 
We will show that $\de_*$ is $(\lambda, \ba, T, \vp)$-moral. 
Let $\overline{\ma} = \langle \ma_u : u \in \lao \rangle$ be a $(\lambda, \ba, T, \vp)$-possibility and we look for a multiplicative refinement. 

By (B) of the Transfer Lemma \ref{l:trans}
there exists $\overline{A} = \langle A_u : u \in \lao \rangle \subseteq \mcp(I)$
such that $\jj(A_u) = \ma_u$ for each $u \in \lao$ and such that $\overline{A}$ is an accurate weak distribution of some $\vp$-type $p$. 
By definition of $\de_1$, assumption (5) of the theorem, $\overline{A}$ is in fact an accurate distribution. Thus $p$ is a consistent
type in $M^I/\de$, therefore by our assumption $(B)$ it is realized. Let $\alpha$ be such a realization.  Then the sequence $\overline{\mb}$ defined by 
$\mb_u = \jj(\{ t \in I : M \models \vp(\alpha[t], c_{t, \epsilon}) \} )$ for $u \in \lao$ is the image of a multiplicative refinement of
$\overline{A}$, so will be a sequence of elements of $\de_*$ (thus of $\ba^+$) 
as well as a multiplicative refinement of $\overline{\ma}$. This completes the proof.
\end{proof}

Often we do not need to keep track of all formulas or patterns; some much smaller ``critical set'' will suffice for morality or saturation. 

\begin{defn} \emph{(Critical sets)} \label{d:critical}
Say that $\mathcal{C}_T =\{ \vp_i : i < i_* \leq |\mathcal{L}(T)| \}$ is a \emph{critical set of formulas for $T$}
if whenever $\lambda \geq \aleph_0$, $\de$ is a regular ultrafilter on $\lambda$ which is
$|T|^+$-good and $M \models T$, we have that $M^\lambda/\de$ is $\lambda^+$-saturated if and only if it is $\lambda^+$-saturated for
$\vp$-types for all $\vp \in \mathcal{C}_T$.
\end{defn}

\section{\hspace{2mm}Lemmas for existence} \label{s:l-e}

In this section we develop some machinery which will streamline the existence proof for excellent filters, Theorem \ref{t:existence}.
See Discussion \ref{d:e-m} below for an organizational discussion of the aims of this section, which we postpone until after several
definitions. 

We begin by recalling \emph{independent families of functions}, a useful tool for keeping track of the remaining decisions in
filter construction.

\begin{defn}
Given a filter $\de$ on $\lambda$, we say that a family $\eff$ of functions from $\lambda$ into $\lambda$ is
\emph{independent $\mod \de$} if for every $n<\omega$, distinct $f_0, \dots f_{n-1}$ from $\eff$ and choice of $j_\ell \in \rn(f_\ell)$,
\[ \{ \eta < \lambda ~:~ \mbox{for every $i < n, f_i(\eta) = j_i$} \} \neq \emptyset ~~~ \operatorname{mod} \de \]
\end{defn}

\begin{thm-lit} \label{thm-iff} \emph{(Engelking-Karlowicz \cite{ek} Theorem 3, see also Shelah \cite{Sh:c} Theorem A1.5 p. 656)}
For every $\lambda \geq \aleph_0$ there exists a family $\eff$ of size $2^\lambda$ with each $f \in \eff$ from 
$\lambda$ onto $\lambda$ such that $\eff$ is independent
modulo the empty filter \emph{(}alternately, by the filter generated by $\{ \lambda \}\emph{)}$. 
\end{thm-lit}

\begin{cor} \label{thm-2}
For every $\lambda \geq \aleph_0$ there exists a regular filter $\de$ on $\lambda$ and a family $\eff$ of size $2^\lambda$ which is independent
modulo $\de$.
\end{cor}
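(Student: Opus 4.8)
The plan is to run the proof of Engelking--Karlowicz (Theorem \ref{thm-iff}) on an index set chosen so that it already carries a natural regularizing family, and then take $\de$ to be the filter generated by that family. Concretely, I would use the index set
\[ I \;=\; \{\, (s,g) \;:\; s \in \fss(\lambda),\ g : \mcp(s) \to \lambda \,\}, \]
which has cardinality $\lambda$ since $|\fss(\lambda)| = \lambda$ and $|{}^{\mcp(s)}\lambda| = \lambda$ for each finite $s$; fix once and for all a bijection of $I$ with $\lambda$ so that the final objects live on $\lambda$ itself. For $A \subseteq \lambda$ define $f_A : I \to \lambda$ by $f_A(s,g) = g(A \cap s)$, set $\eff = \{ f_A : A \in \mcp(\lambda) \}$, and for $\alpha < \lambda$ put $X_\alpha = \{ (s,g) \in I : \alpha \in s \}$; let $\de$ be the filter on $I$ generated by $\{ X_\alpha : \alpha < \lambda \}$.

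There are then three things to verify. First, $\de$ is a proper regular filter: any finite intersection $X_{\alpha_0} \cap \cdots \cap X_{\alpha_{m-1}}$ contains $(\{\alpha_0,\dots,\alpha_{m-1}\},g)$ for any $g$, so $\de$ is proper, and $\langle X_\alpha : \alpha < \lambda \rangle$ is a regularizing family because a point $(s,g)$ lies in $X_\alpha$ exactly when $\alpha \in s$, hence in only finitely many $X_\alpha$. Second, $\eff$ is independent modulo $\de$: each $f_A$ is onto $\lambda$ (choose $s$ disjoint from $A$, or $s$ a singleton when $A = \lambda$, and prescribe $g$ on $\emptyset$ resp.\ on $s$), and given distinct $A_0,\dots,A_{n-1} \subseteq \lambda$, values $j_\ell \in \rn(f_{A_\ell}) = \lambda$, and finitely many $\alpha_0,\dots,\alpha_{m-1} < \lambda$, choose a finite $s \subseteq \lambda$ containing $\{\alpha_0,\dots,\alpha_{m-1}\}$ together with, for each pair $\ell \neq \ell'$, a point of $A_\ell \Delta A_{\ell'}$; then $A_0 \cap s,\dots,A_{n-1}\cap s$ are pairwise distinct, so one can define $g : \mcp(s) \to \lambda$ with $g(A_\ell \cap s) = j_\ell$ (arbitrary elsewhere), and $(s,g)$ witnesses that $\{\, (s',g') : f_{A_\ell}(s',g') = j_\ell,\ \ell < n \,\}$ meets $X_{\alpha_0} \cap \cdots \cap X_{\alpha_{m-1}}$, i.e.\ is nonzero mod $\de$. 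Third, $|\eff| = 2^\lambda$: the assignment $A \mapsto f_A$ is injective, since for $A \neq B$ one picks $\delta \in A \Delta B$, sets $s = \{\delta\}$, and chooses $g$ with $g(A\cap s) \neq g(B \cap s)$. Transporting everything along the bijection $I \cong \lambda$ gives a regular filter on $\lambda$ with an independent family of size $2^\lambda$, as required.

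The only point requiring genuine care is the single display in the independence verification, and the reason it cannot be shortcut is worth flagging: one cannot simply quote Theorem \ref{thm-iff} to get a family independent modulo the trivial filter and then enlarge that filter to a regular one, because independence must survive modulo the (relatively large) regularizing filter, and an arbitrary Engelking--Karlowicz family need not be compatible with an arbitrary regularizing family. Tailoring the index set so that the regularizing constraints ``$\alpha \in s$'' and the independence constraints ``$g(A_\ell \cap s) = j_\ell$'' can always be met by one common point $(s,g)$ is precisely what makes the argument go through, and it is where all the (minor) work lies.
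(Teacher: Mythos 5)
Your proof is correct, and it is precisely the intended argument: the paper states the corollary without proof immediately after citing Engelking--Karlowicz, and the standard way to obtain it is to unfold the Engelking--Karlowicz construction on the index set $I = \{(s,g) : s \in \fss(\lambda),\ g : \mcp(s)\to\lambda\}$ and observe, as you do, that the sets $X_\alpha = \{(s,g) : \alpha \in s\}$ give a regularizing family compatible (by design of the witness point) with all the independence constraints. Your closing remark is also well taken: one cannot blindly superimpose an arbitrary regularizing family on an arbitrary Engelking--Karlowicz family, and building the regularizing coordinate into the index set is exactly what makes the two sets of constraints simultaneously satisfiable.
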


\begin{defn}
Let $\ba$ be a Boolean algebra. $CC(\ba)$ is the smallest regular cardinal $\lambda$ such that any maximal antichain of $\ba$ has cardinality
less than $\lambda$. If $\de$ is a filter on $I$, by $CC(B(\de))$ we will mean $CC(B)$ for $B = \mcp(I)/\de$.
\end{defn}

\begin{fact} \emph{(\cite{Sh:c} p. 359)} \label{f:cc}
Suppose $\de$ is a maximal filter on $I$ modulo which $\eff$ is independent. Then $CC(B(\de)) = \aleph_0$ iff for only finitely many
$f \in \eff$ is $|\rn(f)|>1$, and for no $f \in \eff$ is $|\rn(f)| = \aleph_0$. Otherwise $CC(B(\de))$ is the first regular cardinal
$\lambda > \aleph_0$ such that $f \in \eff$ implies $|\rn(f)| < \lambda$.  
\end{fact}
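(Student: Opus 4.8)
The plan is to identify the combinatorial structure of $B(\de)=\mcp(I)/\de$ and thereby reduce the computation of $CC(B(\de))$ to a cellularity computation for a natural poset of finite partial functions. Write $\eff=\{f_i:i\in J\}$, and for a finite partial function $p$ with $\dm(p)\subseteq J$ and $p(i)\in\rn(f_i)$ put $A_p=\bigcap_{i\in\dm(p)}f_i^{-1}(p(i))$, a ``box''; independence of $\eff$ modulo $\de$ is exactly the statement that every box $A_p$ is nonzero modulo $\de$. First I would discard the trivial $f_i$, i.e.\ those with $|\rn(f_i)|=1$: for these $f_i^{-1}(j)=I$, so they contribute nothing to $B(\de)$ and affect neither side of the claimed equality. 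Let $P$ be the poset of the remaining conditions $p$ ordered by reverse inclusion, where two conditions are declared incompatible iff they disagree on a common coordinate. The key reduction is: using \emph{only} the maximality of $\de$ (together with independence of $\eff$), the images $\mathbf{b}_p:=A_p/\de$ of the nonzero boxes are dense in $B(\de)\setminus\{0_{B(\de)}\}$. Indeed, for $A\subseteq I$ with $A\neq\emptyset \bmod\de$ the set $I\setminus A$ is not in $\de$, so by maximality $\eff$ is not independent modulo the filter generated by $\de\cup\{I\setminus A\}$; unwinding the definitions, this says precisely that some box $A_p$ is nonzero mod $\de$ with $A_p\subseteq A\bmod\de$, so $0_{B(\de)}\neq\mathbf{b}_p\leq A/\de$.

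Next I would check that $p\mapsto\mathbf{b}_p$ is an order-isomorphism of $P$ onto this dense set. If $p\cup q$ is a function then $A_p\cap A_q=A_{p\cup q}$, which is nonzero by independence; if $p,q$ disagree on some common coordinate then $A_p\cap A_q=\emptyset$ outright; and if $q\not\supseteq p$ then an extension of $q$ that disagrees with $p$ on a coordinate outside $\dm(q)$ witnesses a nonzero box below $\mathbf{b}_q$ disjoint from $\mathbf{b}_p$, so $\mathbf{b}_q\not\leq\mathbf{b}_p$ (and the map is injective). Since the value of $CC$ depends only on a dense subposet, $CC(B(\de))$ equals the least regular $\mu$ such that every antichain of $P$ has cardinality $<\mu$.

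Now for the two bounds. For the lower bound: for each nontrivial $f_i$ the family $\{f_i^{-1}(j)/\de:j\in\rn(f_i)\}$ is an antichain of $B(\de)$ of size $|\rn(f_i)|$, so $CC(B(\de))>|\rn(f_i)|$; and if infinitely many $f_i$ are nontrivial then a branching sequence of conditions gives an infinite antichain, so $CC(B(\de))\geq\aleph_1$. If instead only finitely many $f_i$ are nontrivial and each of those has finite range, then $P$ is finite, hence $B(\de)$ is atomic with finitely many atoms, hence finite, and every antichain is finite, so $CC(B(\de))=\aleph_0$ --- this is the exceptional case of the statement. In every remaining case the above already gives $CC(B(\de))\geq\lambda^*$, where $\lambda^*$ denotes the first regular $\lambda>\aleph_0$ with $|\rn(f_i)|<\lambda$ for all $i$. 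For the matching upper bound I would run a $\Delta$-system argument in $P$: if $\{p_\xi:\xi<\mu\}$ were an antichain with $\mu$ regular uncountable and $\mu>|\rn(f_i)|$ for all $i$, then since the $\dm(p_\xi)$ are finite sets we could thin (by the $\Delta$-system lemma) to a subfamily of size $\mu$ whose domains form a $\Delta$-system with root $r$; as there are only $\prod_{i\in r}|\rn(f_i)|<\mu$ possibilities for $p_\xi\rstr r$, we could thin once more so that all these restrictions coincide; but then any two surviving conditions agree on the intersection $r$ of their domains and are therefore compatible in $P$, a contradiction. Hence $P$ has no antichain whose cardinality is a regular cardinal exceeding $\sup_i|\rn(f_i)|+\aleph_0$, and since $CC$ is read off through regular cardinals this yields $CC(B(\de))\leq\lambda^*$, completing the equality.

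I expect the genuine obstacle to lie in the reduction of the first two paragraphs: one must squeeze out of the bare maximality of $\de$, plus the independence of $\eff$, the exact dictionary between $B(\de)$ and the poset $P$, in particular the density of boxes. Once that dictionary is available the remaining steps are routine, needing only some cardinal-arithmetic bookkeeping to confirm that the two regimes of the statement really do match $\lambda^*$ in all cases --- including those where $\sup_i|\rn(f_i)|$ is not attained, or is a singular limit cardinal.
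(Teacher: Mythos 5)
Your proposal is correct and uses the standard route: the paper itself cites this fact from \cite{Sh:c} without proof, and the density-of-boxes argument followed by the $\Delta$-system lemma is exactly the intended one (your density claim is the same maximality argument that appears in the paper as Observation \ref{dense-new}). One small point worth tightening: in the density step you pass to the filter generated by $\de\cup\{I\setminus A\}$ and invoke maximality, which only makes sense when that filter is proper, i.e.\ when $A\notin\de$; in the remaining case $A\in\de$ the empty condition already witnesses density, so the argument is fine but should distinguish these two cases explicitly.
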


The following definition, ``good triple,'' is in current use, so we keep the name here and note that it does not imply that the filter is good in the 
sense of Definition \ref{d:good} (though this should not cause confusion). As usual, omitting ``pre-'' means being maximal for the given property.

\begin{defn} \emph{Good triples (cf. \cite{Sh:c} Chapter VI)} \label{good-triples}
\item The triple $(I, \de, \gee)$ is $(\lambda, \kappa)$-pre-good when:
\begin{enumerate}
\item $I$ is an infinite set of cardinality $\lambda$
\item $\de$ is a filter on $I$
\item $\gee$ is a family of functions from $I$ to $\kappa$
\item for each function $h$ from some finite $\gee_h \subseteq \gee$ to $\kappa$ such that $g \in \gee_h \implies h(g) \in \rn(g)$, 
we have that $A_h \neq \emptyset \mod \de$, where
\[ A_h = \{ t \in I ~:~ g \in G_h \implies g(t) = h(g) \} \]
\item $\fin(\gee) = \{ h ~:~\mbox{$h$ as just defined with $\operatorname{dom}(h)$ finite } \}$
\item $\fin_s(\gee) = \{ A_h : h \in \fin(\gee) \}$
\item We omit ``pre'' when $\de$ is maximal subject to these conditions.
\end{enumerate}
\end{defn}

\begin{obs} \label{good-dense} 
If $(I, \de, \gee)$ is a good triple, then $\fin_s(\gee)$ is dense in $\mcp(I) \mod \de$.
\end{obs}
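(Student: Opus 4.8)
The statement to prove is Observation \ref{good-dense}: if $(I, \de, \gee)$ is a good triple, then $\fins(\gee)$ is dense in $\mcp(I) \mod \de$. Here ``dense in $\mcp(I) \mod \de$'' should mean: for every $X \subseteq I$ with $X \neq \emptyset \bmod \de$, there is some $A_h \in \fins(\gee)$ with $A_h \subseteq X \bmod \de$ (equivalently, $A_h \setminus X = \emptyset \bmod \de$), and $A_h \neq \emptyset \bmod \de$.

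The plan is to exploit maximality of $\de$. Since the triple is \emph{good} (not just pre-good), $\de$ is maximal among filters on $I$ for which $\gee$ remains a family witnessing condition (4) of Definition \ref{good-triples} — i.e., maximal subject to $A_h \neq \emptyset \bmod \de$ for all $h \in \fin(\gee)$. First I would fix $X \in \de^+$ and suppose toward a contradiction that no $A_h \in \fins(\gee)$ satisfies $A_h \subseteq X \bmod \de$; equivalently, for every $h \in \fin(\gee)$, $A_h \setminus X \neq \emptyset \bmod \de$, i.e. $A_h \cap (I \setminus X) \neq \emptyset \bmod \de$.

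The key step is then to consider the filter $\de' $ generated by $\de$ together with the single set $I \setminus X$. By the previous line, $\de'$ still has the property that $A_h \cap (I\setminus X) = A_h \setminus X$ is nonempty mod $\de$, hence nonempty mod $\de'$ for every $h \in \fin(\gee)$ — more precisely, every set of the form (element of $\de'$) $\cap\, A_h$ is nonempty mod $\de$, which is exactly the statement that $\gee$ still witnesses condition (4) relative to $\de'$. Also $\de' \supsetneq \de$ properly, since $I \setminus X \notin \de$ (as $X \in \de^+$... actually $X \neq \emptyset \bmod \de$ only guarantees $I \setminus X \notin \de$ if $X \in \de$; I should be careful — let me instead take $X \in \de^+$, so $I \setminus X \notin \de$ exactly when... hmm, $X \in \de^+$ means $X \neq \emptyset \bmod \de$, which does not prevent $I\setminus X \in \de$). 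I need to handle the case $X \in \de$ separately (then $A_\emptyset = I \in \de$ works trivially if $\emptyset$ is allowed as $h$; $\fins(\gee)$ contains $A_h$ for $h$ with finite domain, including empty domain, giving $I$). Otherwise $I \setminus X \in \de^+$ too, and adjoining $I\setminus X$ properly extends $\de$. This contradicts maximality of $\de$, so some $A_h \subseteq X \bmod \de$. Finally, $A_h \neq \emptyset \bmod \de$ by condition (4) of the good triple, so $\fins(\gee)$ is dense.

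The main obstacle — really the only subtle point — is getting the logical bookkeeping of ``maximal subject to (4)'' exactly right: one must verify that adjoining $I \setminus X$ to $\de$ preserves condition (4) for the \emph{same} family $\gee$, which reduces to checking that for every finite $h$ and every $Y \in \de$, $Y \cap (I\setminus X) \cap A_h \neq \emptyset \bmod \de$; since $Y \cap A_h$ is again, up to intersecting, controlled by finite $h'$ (note $A_h \cap A_{h'} = A_{h \cup h'}$ when $h, h'$ are compatible, and is $\emptyset$ otherwise — but here $Y$ is an arbitrary element of $\de$, not an $A_{h'}$), I'd instead argue directly: $Y \in \de$ and $A_h \setminus X \neq \emptyset \bmod \de$ give $Y \cap (A_h \setminus X) \neq \emptyset \bmod \de$ since intersecting a $\de$-positive set with a member of $\de$ stays $\de$-positive. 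That's the whole content. I'd also remark that the trivial case $X \in \de$ is immediate because $I = A_{h}$ for $h$ the empty function lies in $\fins(\gee)$ and $I \subseteq X \bmod \de$.
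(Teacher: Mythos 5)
Your proof is correct and uses the same maximality-contradiction argument as the paper's proof (which is carried out in Observation \ref{dense-new}): if no $A_h$ lies inside $X$ mod $\de$, then adjoining $I\setminus X$ to $\de$ yields a proper extension still satisfying condition (4) of Definition \ref{good-triples}, contradicting maximality of $\de$. One small remark: your worry about whether the extension is proper is unnecessary, since $X \neq \emptyset \bmod \de$ is by definition equivalent to $I\setminus X \notin \de$ (if $I\setminus X \in \de$ then $X$ is disjoint from a member of $\de$), so the separate handling of the case $X \in \de$ can be dropped.
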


\begin{proof}
We prove this in a more general case, Observation \ref{dense-new} below. 
\end{proof}

The next fact summarizes how such families allow us to construct ultrafilters; for more details,
see \cite{Sh:c} Chapter VI, Section 3.

\begin{fact} \label{uf} \emph{(\cite{Sh:c} Lemma 3.18 p. 360)}
Suppose that $\de$ is a maximal filter modulo which $\eff \cup \gee$ is independent, $\eff$ and $\gee$ are disjoint,
the range of each $f \in \eff \cup \gee$ is of cardinality less than $\operatorname{cof}(\alpha)$, $\operatorname{cof}(\alpha) > \aleph_0$,
$\eff = \bigcup_{\eta < \alpha} \eff_\eta$, the sequence $\langle \eff_\eta : \eta < \alpha \rangle$ 
is increasing, and let $\eff^\eta = \eff \setminus \eff_\eta$.
Suppose, moreover, that $D_\eta$ ($\eta<\alpha$) is an increasing sequence of filters which satisfy:

\begin{enumerate}
\item[(i)] Each $D_\eta$ is generated by $\de$ and sets supported $\mod \de$ by $\fin_s(\eff_\eta \cup \gee)$.
\item[(ii)] $\eff^\eta \cup \gee$ is independent modulo $D_\eta$. 
\item[(iii)] $D_\eta$ is maximal with respect to \emph{(i), (ii)}. 
\end{enumerate}
Then
\begin{enumerate}
\item $D^* := \bigcup_{\eta < \alpha} D_\eta$ is a maximal filter modulo which $\gee$ is independent.
\item If $\gee$ is empty, then $D^*$ is an ultrafilter, and for each $\eta < \alpha$, \emph{(ii)} is satisfied whenever $D_\eta$ is
non-trivial and satisfies \emph{(i)}.
\item If $\eta < \alpha$ and we are given $D^\prime_\eta$ satisfying \emph{(i), (ii)} we can extend it to a filter satisfying
\emph{(i), (ii), (iii)}. 
\item If $f \in \eff^\eta$ then $\langle f^{-1}(t) /D_\eta : t \in Range(f) \rangle$ is a partition in $B(D_\eta)$.  
\end{enumerate}
\end{fact}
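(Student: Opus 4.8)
The plan is to reconstruct the argument of \cite{Sh:c}, VI.3.18, deducing the four conclusions from three ingredients: that independence of a family of functions modulo a filter is preserved under increasing unions of filters (finite support); the density of $\fin_s$ in a good triple, Observation~\ref{good-dense}, which lets one replace an arbitrary member of $D_\eta$ by a set $A_h$ with $h\in\fin(\eff_\eta\cup\gee)$ below it; and the maximality clause (iii), read as the statement that $D_\eta$ ``decides'' each supported set, i.e. for $h\in\fin(\eff^\eta\cup\gee)$, either $A_h\in D_\eta$ or some finite configuration from $\eff^\eta\cup\gee$ is cut off from $A_h$ modulo $D_\eta$. The easy conclusions come first. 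For (1), $D^*$ is a filter as an increasing union, and it is proper with $\gee$ independent: a putative witness to the contrary --- a finite configuration $(g_\ell,j_\ell)_{\ell<n}$ from $\gee$ together with $A\in D^*$ with $A\cap\{t:\bigwedge_\ell g_\ell(t)=j_\ell\}=\emptyset$ --- lies entirely in a single $D_\eta$ (since $A\in D_\eta$ for some $\eta$ and the $g_\ell$ belong to $\eff^\eta\cup\gee$ for every $\eta$), contradicting (ii). For (4), applying independence of $\eff^\eta\cup\gee$ modulo $D_\eta$ to a single $f\in\eff^\eta$ and a value $t\in\rn(f)$ gives $f^{-1}(t)\neq\emptyset\bmod D_\eta$; the $f^{-1}(t)$ are pairwise disjoint and cover $I\in D_\eta$, so their classes form a maximal antichain of $B(D_\eta)$. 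For (3), the union of an increasing chain of filters each satisfying (i) again satisfies (i) (the supported generating sets accumulate), and preserves (ii) by finite support; so Zorn's lemma extends any $D'_\eta$ satisfying (i) and (ii) to one satisfying (i), (ii), (iii).

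For conclusion (2), assume $\gee=\emptyset$. Then ``$\gee$ independent modulo $D^*$'' is vacuous, so (1) says exactly that $D^*$ is a maximal proper filter, i.e. an ultrafilter. The remaining clause is that, with $\gee=\emptyset$, any nontrivial $D_\eta$ satisfying (i) automatically satisfies (ii), i.e. $\eff^\eta$ is independent modulo $D_\eta$. By Observation~\ref{good-dense} every nonzero element of $B(D_\eta)$ lies below the class of some $A_h$ with $h\in\fin(\eff_\eta)$, so it suffices to verify, for such $h$ and any finite configuration $(f_\ell,j_\ell)_{\ell<m}$ from $\eff^\eta$, that $A_h\cap\{t:\bigwedge_\ell f_\ell(t)=j_\ell\}\neq\emptyset\bmod\de$; since $\eff_\eta$ and $\eff^\eta$ are disjoint, $h$ together with the new constraints forms a single admissible finite partial choice function $h'$ on $\eff$, the displayed set is exactly $A_{h'}$, and $A_{h'}\neq\emptyset\bmod\de$ by independence of $\eff$ modulo $\de$.

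The main obstacle is the maximality half of conclusion (1): that no filter properly extending $D^*$ keeps $\gee$ independent. One uses that, $\de$ being maximal modulo which $\eff\cup\gee$ is independent, $(I,\de,\eff\cup\gee)$ is a good triple, so $\fin_s(\eff\cup\gee)$ is dense mod $\de$ (Observation~\ref{good-dense}); and that since $\eff^\eta\downarrow\emptyset$, $D^*$ is itself generated by $\de$ together with sets $A_h$, $h\in\fin(\eff\cup\gee)$, and decides each single $f\in\eff$. Given $X\notin D^*$ we must produce a finite $\gee$-configuration $(\bar g,\bar j)$ and $Z\in D^*$ with $Z\cap A_{(\bar g,\bar j)}\cap X=\emptyset$, which prevents adding $X$ while keeping $\gee$ independent. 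Since $X\notin D^*$, $I\setminus X$ is positive mod $D^*$; feeding this into the maximality clauses of the $D_\eta$ yields, for each large $\eta$, a configuration $c_\eta$ from $\eff^\eta\cup\gee$ and a set $A_{h_\eta}\in D_\eta$ ($h_\eta\in\fin(\eff_\eta\cup\gee)$) with $A_{c_\eta}\cap A_{h_\eta}\cap X=\emptyset\bmod\de$; splitting $c_\eta$ into its $\eff^\eta$- and $\gee$-parts, the $\eff^\eta$-part gets merged with $h_\eta$ (both now referring only to $\eff$-functions) once one stabilizes the $\gee$-part and the chosen values at a single level --- legitimate because $\operatorname{cof}(\alpha)>\aleph_0$ and the ranges have cardinality $<\operatorname{cof}(\alpha)$ --- leaving a pure $\gee$-configuration $(\bar g,\bar j)$ with $A_{(\bar g,\bar j)}\cap X=\emptyset\bmod D^*$. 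The delicate point throughout is precisely this bookkeeping: splitting the finite partial choice functions that occur into $\eff$- and $\gee$-coordinates, merging compatible ones, checking that absorbing the $\eff^\eta$-coordinates into $D_\eta$ is legitimate exactly because $D_\eta$ already contains all sets supported by $\fin_s(\eff_\eta\cup\gee)$, and exploiting the cofinality hypothesis to stabilize. This is the genuinely technical heart of the Lemma.
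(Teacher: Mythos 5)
This statement is a cited Fact from \cite{Sh:c} Lemma VI.3.18; the paper gives no proof of its own, so your reconstruction can only be compared against Shelah's original argument. Your treatment of (3) (Zorn plus finite support), (4), and the ``filter'' half of (1) (increasing union, properness, independence of $\gee$ via finite support in a single $D_\eta$) is essentially right, and your overall plan matches the standard one.

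Two places, however, have real gaps. First, in clause (2) you invoke Observation~\ref{good-dense} to assert that ``every nonzero element of $B(D_\eta)$ lies below the class of some $A_h$ with $h\in\fin(\eff_\eta)$.'' This is not an instance of that Observation: the Observation is about density of $\fin_s$ of the family with respect to which the filter is \emph{maximal and independent}, whereas here $\eff_\eta$ is exactly the consumed part --- $\eff_\eta$ need not be independent mod $D_\eta$, and once $D_\eta$ has absorbed a set supported by $\fin_s(\eff_\eta)$ the classes $A_h/D_\eta$ for $h\in\fin(\eff_\eta)$ tend to be $0$ or $1$, certainly not dense. What you actually want, and what your subsequent merging step needs, is the weaker and correct statement that every $D\in D_\eta$ contains $D'\cap B$ with $D'\in\de$ and $B$ supported mod $\de$ by $\fin_s(\eff_\eta)$, hence some $\de$-positive $A_h\subseteq D\bmod\de$ with $h\in\fin(\eff_\eta)$; this follows from the generation clause (i) plus density of $\fin_s(\eff\cup\gee)$ mod $\de$, not from \ref{good-dense} applied to $D_\eta$.

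Second, and more seriously, the maximality half of (1). You propose to ``feed $I\setminus X$ into the maximality clauses of the $D_\eta$'' to obtain configurations $c_\eta$ from $\eff^\eta\cup\gee$ that are killed. But clause (iii) only says $D_\eta$ is maximal among filters satisfying (i) \emph{and} (ii); $X$ need not be supported mod $\de$ by $\fin_s(\eff_\eta\cup\gee)$, so $\langle D_\eta, X\rangle$ failing to extend $D_\eta$ need not be a failure of (ii) at all --- it may simply be a failure of (i), which produces no $\gee$-configuration and no contradiction. The entire difficulty of the lemma is precisely in converting the hypothesis on an arbitrary set $X$ into a statement about supported sets (e.g.\ by first proving that $\fin_s(\eff^\eta\cup\gee)$ is dense mod $D_\eta$, or that $D^*$ decides all sets supported by $\fin_s(\eff)$), and your sketch skips over this. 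The cofinality-stabilization step is also not justified as written: the $\gee$-parts $h^g_\eta$ range over $\fin(\gee)$, whose cardinality has no useful bound in terms of $\operatorname{cof}(\alpha)$, so there is no a priori reason a cofinal subsequence of the $h^g_\eta$ should agree. You do flag this as ``the genuinely technical heart,'' which is an accurate diagnosis, but as written the sketch does not contain the idea needed to close it.
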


\begin{defn} \label{d:ba} Denote by
$\ba_{\chi, \mu}$ the completion of the Boolean algebra generated by 
\\ $\{ x_{\alpha, \epsilon} : \alpha < \chi, \epsilon < \mu \}$ freely except for the conditions
$\alpha < \chi \land \epsilon < \zeta < \mu \implies x_{\alpha, \epsilon} \cap x_{\alpha, \zeta} = 0$.
\end{defn}

\begin{disc} \label{d:e-m} 
\emph{In our main construction, we first build an excellent filter whose quotient Boolean algebra admits a surjective homomorphism onto $\ba$, 
and then construct an ultrafilter on this $\ba$ using the method of independent functions.}

\emph{For the first stage, the set of tools developed beginning 
with Definition \ref{d:m1} will allow us to upgrade the notion of ``$\gee$ is independent $\mod \de$'' 
to take into account a background Boolean algebra $\ba$
which retains a specified amount of freedom. This will be used in the construction of Theorem \ref{t:existence}, 
where the intention will be that (by the end of the construction) we will have the desired map to $\ba$.}

\emph{For the second stage, 
Definition \ref{gb-triples} through Observation \ref{o:support}, which we now discuss, are direct translations of the
facts about families of independent functions, for the purposes of constructing ultrafilters on $\ba = \ba_{2^\lambda,\mu.}$}

\emph{Thus, in the case where $\ba$ from the first stage is taken to be $\ba_{2^\lambda, \mu}$, the Boolean algebra $\ba$ occurring throughout this section is essentially the same object, but its different uses correspond 
to the different stages in the proof.}
\end{disc}

\begin{defn} \emph{(Good Boolean triples)} \label{gb-triples}
\item The ``Boolean'' triple $(\ba, \de, \gee)$ is $(2^\lambda,\kappa)$-pre-good when:
\begin{enumerate}
\item $\ba = \ba_{2^\lambda, \kappa}$, so in particular $\ba$ has the $\kappa^+$-c.c.
\item $\de$ is a filter on $\ba$
\item $\gee = \{ \{ \mb_{i,\epsilon} : \epsilon < \mu \} : i < \kappa \}$ is a set of partitions of $\ba$
\item for each function $h$ from some finite $\sigma \subseteq \kappa$ to $\kappa$ 
we have that $A_h \neq \emptyset \mod \de$, where
\[ A_h = \bigcap \{ \mb_{i,\epsilon} ~:~ i \in \dm(h), h(i)=\epsilon \} \]
\item $\fin(\gee) = \{ h ~:~\mbox{$h$ as just defined with $\operatorname{dom}(h)$ finite } \}$
\item $\fin_s(\gee) = \{ A_h : h \in \fin(\gee) \}$
\item We omit ``pre'' when $\de$ is maximal subject to these conditions.
\end{enumerate}
\end{defn}

\begin{rmk}
Notice that according to our notation, ``Boolean'' triples are $(2^\lambda, ..., ...)$-good where
ordinary triples $(I, \de, \gee)$ would have been $(\lambda, ..., ...)$-good. This should not cause confusion.
Throughout the paper we consider independent families of size $2^\lambda$ and index sets of size $\lambda$. 
\end{rmk}

\begin{defn} \label{d:suitable}
Say that $\gee$ is a \emph{set of independent partitions of $\ba$ $\mod \de$} when
$(\ba, \de, \gee)$ is a pre-good Boolean triple as in Definition \ref{gb-triples}.
\end{defn}

\begin{obs} \label{o:support}
Let $(\ba, \de, \gee)$ be a $(2^\lambda, \mu)$-good Boolean triple. 
Let $\langle \mb_j : j < \lambda \rangle$ be a sequence of elements of $\ba$ which are each nonzero modulo $\de$. 
Then there exists a set $\gee^\prime \subseteq \gee$ of independent partitions, $|\gee^\prime| \leq \lambda$
such that for each $j<\lambda$ the element $\mb_j$ is supported in $\fin_s(\gee^\prime)$.

In particular, in the notation of Definition \ref{d:ba} this is true when 
$\gee$ is $\{ \{ \mx_{\alpha,\epsilon} : \alpha < \mu \} : i < 2^\lambda \}$ and
$\de = \{ 1_\ba \}$. 
\end{obs}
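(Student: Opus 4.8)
The plan is to establish the statement one element $\mb_j$ at a time: I will show that each $\mb_j$ is supported modulo $\de$ by a subfamily $\gee_j\subseteq\gee$ of partitions with $|\gee_j|\le\mu$, and then set $\gee'=\bigcup_{j<\lambda}\gee_j$. Since $\mu\le\lambda$ throughout, $|\gee'|\le\lambda\cdot\mu=\lambda$; $\gee'$ is again a set of independent partitions because $\fin(\gee')\subseteq\fin(\gee)$, so clause (4) of Definition \ref{gb-triples} for $\gee'$ is inherited from that for $\gee$ (and clauses (1)--(3) are immediate); and $\fins(\gee_j)\subseteq\fins(\gee')$, so each $\mb_j$ remains supported by $\fins(\gee')$. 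Thus everything reduces to the single-element claim.

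Fix $j<\lambda$. Since $(\ba,\de,\gee)$ is a good Boolean triple, $\fins(\gee)$ is dense in $B(\de)=\ba/\de$ (Observation \ref{good-dense}, in the generality of Observation \ref{dense-new}; alternatively, were $\fins(\gee)$ not dense one could enlarge $\de$ by $1-\mb$ for a witnessing $\mb$ and contradict maximality, using clause (4)). As $\mb_j\neq\emptyset\bmod\de$, choose a family $\mathcal{A}_j\subseteq\{A_h:h\in\fin(\gee),\ A_h\subseteq\mb_j\bmod\de\}$ that is pairwise disjoint modulo $\de$ and maximal such. The crucial observation is that for $h,h'\in\fin(\gee)$, either $h\cup h'$ is a function --- in which case $A_h\cap A_{h'}=A_{h\cup h'}\in\fins(\gee)$, which is nonzero modulo $\de$ by Definition \ref{gb-triples}(4) --- or else $A_h\cap A_{h'}=0_\ba$; hence, for such elements, ``$A_h\cap A_{h'}=0\bmod\de$'' is equivalent to ``$A_h\cap A_{h'}=0_\ba$''. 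Therefore $\mathcal{A}_j$ is a genuine antichain of $\ba$, and since $\ba=\ba_{2^\lambda,\mu}$ has the $\mu^+$-c.c.\ (Definition \ref{gb-triples}(1)) we get $|\mathcal{A}_j|\le\mu$. Moreover $\bigcup\mathcal{A}_j=\mb_j\bmod\de$: the inclusion $\subseteq$ is clear, and if $\mb_j\setminus\bigcup\mathcal{A}_j$ were nonzero modulo $\de$, density of $\fins(\gee)$ in $B(\de)$ would yield some $A_h\neq\emptyset\bmod\de$ with $A_h\subseteq\mb_j\setminus\bigcup\mathcal{A}_j\bmod\de$, hence $A_h\subseteq\mb_j\bmod\de$ and $A_h$ disjoint modulo $\de$ from every member of $\mathcal{A}_j$, contradicting maximality. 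Now let $\gee_j=\{\,\bar g_i\in\gee: i\in\bigcup\{\dm(h):A_h\in\mathcal{A}_j\}\,\}$; each $\dm(h)$ is finite and $|\mathcal{A}_j|\le\mu$, so $|\gee_j|\le\mu$, and the equality $\mb_j=\bigcup\mathcal{A}_j\bmod\de$ exhibits $\mb_j$ as a union of elements of $\fins(\gee_j)$, so $\mb_j$ is supported by $\fins(\gee_j)$ modulo $\de$. This proves the single-element claim, and hence the main statement.

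For the final ``in particular,'' one checks that $(\ba,\{1_\ba\},\gee)$ with $\gee=\{\{\mx_{i,\epsilon}:\epsilon<\mu\}:i<2^\lambda\}$ satisfies (1)--(6) of Definition \ref{gb-triples}: clause (4) holds because $A_h=\bigcap_{i\in\dm(h)}\mx_{i,h(i)}\neq0_\ba$, the generators being free apart from the within-partition disjointness relations. The only property of a good (i.e.\ maximal) triple used above is the density of $\fins(\gee)$ in $B(\de)$; here $B(\de)=\ba$, and $\fins(\gee)$ is precisely the collection of nonzero basic elements of the free-product presentation of $\ba_{2^\lambda,\mu}$ (any nonzero element of $\ba$ dominates such an $A_h$), so these are dense and the argument of the previous paragraph applies verbatim.

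I expect the main obstacle --- or rather, the only non-mechanical point --- to be the bound $|\mathcal{A}_j|\le\mu$: there is no general principle forcing ``antichains modulo $\de$'' in the quotient $B(\de)$ to have size $\le\mu$ merely from the $\mu^+$-c.c.\ of $\ba$, so one really must use the closure of $\fins(\gee)$ under intersection together with the nondegeneracy clause Definition \ref{gb-triples}(4) to transfer a disjoint-mod-$\de$ family of basic elements into a genuine antichain of $\ba$, where the given chain condition applies directly.
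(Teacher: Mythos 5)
Your proof is correct and follows essentially the same route as the paper's: for each $j$ build a maximal pairwise-disjoint family of elements of $\fins(\gee)$ using density, bound its size by $\mu$ via the $\mu^+$-c.c., and then collect the finitely many rows of $\gee$ each such element uses, giving $\le\mu$ rows per $j$ and $\le\lambda$ overall. The only cosmetic difference is that the paper alternates to build a full partition of $1_\ba$ (parts inside and outside $\mb_j$) whereas you build the antichain only inside $\mb_j$ and invoke density for coverage; your explicit observation that ``disjoint $\bmod\de$'' and ``disjoint in $\ba$'' coincide for elements of $\fins(\gee)$, via clause (4) of Definition~\ref{gb-triples}, usefully makes rigorous a step the paper leaves implicit when it appeals to the $\mu^+$-c.c.
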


\begin{proof}
For each $j<\lambda$, choose a partition $\mpp_j = \{ A_{h^j_\ell} : \ell < \mu \}$ of $\fin_s(\gee)$ supporting $\mb_j$.
[How? We try to do this by induction on $\ell < \mu^+$ using the translation of Fact \ref{good-dense}. At odd steps
choose new elements for the partition in the ``remainder'' inside $\mb$, 
at even steps choose new elements in the ``remainder'' inside $1_\ba \setminus \mb$, and at limits take unions.
By the $\mu^+$-c.c. any such partition will stop at some bounded stage below $\mu^+$ as the remainders become empty, 
and then we may renumber so the partition is indexed by $\ell < \mu$.]

Let $X_j = \{ i < 2^\lambda : (\exists \ell < \mu)(i \in \dm(h^j_\ell) \}$ be the set of indices for ``rows'' in $\gee$ used
in the partition for $\mb_j$.
Let $\gee^\prime = \{ \mb_{i,\epsilon} : \epsilon < \mu, ~(\exists j < \lambda) ( i \in X_j ) \}$ collect all such ``rows.''
Then $|\gee^\prime| \leq \lambda$, and each $\mb_j$ is supported by $\fin_s(\gee^\prime)$ by construction, which completes the proof. 
\end{proof}


\br
This completes the introduction of notation for building ultrafilters on a specified Boolean algebra (the step corresponding to
``morality''). We now introduce notation for the complementary step, corresponding to ``excellence.''


\begin{defn} \emph{(Boolean algebra constraints on an independent family of functions)} \label{d:m1}
Fix $\lambda \geq \aleph_0$ and an index set $I$, $|I| = \lambda$, $\de_0$ a filter on $I$. 
Let $\ba$ be a complete Boolean algebra of cardinality $\leq 2^\lambda$ with $CC(\ba) \leq \lambda^+$, 
$\gee \subseteq {^\lambda}2$ a family of functions independent modulo $\de_0$, and $\de \supseteq \de_0$ a filter.
Fix in advance a choice of enumeration of $\langle \mb_\gamma : \gamma < \gamma_* \rangle$ of $\ba \setminus \{ 0_\ba \}$ 
and an enumeration $\langle g_\gamma : \gamma < \gamma_* \rangle$ of $\gee$.

Given this enumeration, which will remain fixed for the remainder of the argument, set
$B_\gamma = g^{-1}_\gamma\{1\}$ for $\gamma < \gamma_*$. 

Let $[\gee|\ba] = \{ X : X \subseteq I, I \setminus X \in \operatorname{Cond} \}$, 
where
\begin{align*}
\operatorname{Cond} = \{ \sigma(B_{\gamma_0}, \dots B_{\gamma_{n-1}}) ~~:~~ & \sigma(x_0, \dots x_{n-1}) ~\mbox{is a Boolean term and} \\
& \gamma_0, \dots, \gamma_{n-1} < \gamma_* = |\ba| ~\mbox{are such that} \\
& \ba \models \mbox{``$ \sigma(\mb_{\gamma_0}, \dots \mb_{\gamma_{n-1}}) = 0$''} \}
\end{align*} 

\noindent Say that \emph{$\gee$ is constrained by $\ba$ modulo $\de$} when, for some choice of enumeration of $\ba$ and of $\gee$ inducing a 
definition of $\{ B_\gamma : \gamma < \gamma_* \}$, we have that $\de$ contains the filter generated by $[\gee|\ba]$.
\end{defn}

\begin{rmk} Remarks on Definition $\ref{d:m1}$:
First, we could have taken $\gee$ to be any family of functions with range of size $\geq 2$, e.g. $\mu$.
Second, this Definition looks towards Observation \ref{o:cong} and Observation \ref{o:works} below. 
\end{rmk}

In the main case of interest, the constraints on $\gee$ given by the Boolean algebra $\ba$ are the only barriers to independence:

\begin{defn} \label{d:ba-filter1}
In the notation of Defintion \ref{d:m1}, let $\de$ be a filter on $I$, $|I| = \lambda$.

\br
\begin{enumerate}
\item Suppose that $\gee$ is constrained by $\ba$ modulo $\de$. Fix enumerations of $\gee, \ba$ witnessing this. 

\item Let $\fin(\gee) = \{ h ~: ~\dm(h) \subseteq \gee, |\dm(h)| < \aleph_0, g \in \dm(h) \implies h(g) \in \rn(g) \}$

\item Say that $h \in \fin(\gee)$ is \emph{prevented by $\ba$} when  
\\ $h = \{ (g_\gamma, i_\gamma) : \gamma \in \sigma \in [\gamma_*]^{<\aleph_0},
i_\gamma \in \{ 0, 1\} = \rn(g_\gamma) \}$ and we have that
\[ \ba \models ~\mbox{``} \bigcap_{\gamma \in \sigma} (\mb_{\gamma})^{i_\gamma} ~ = 0 \mbox{''} \]
recalling Convention \ref{c:1} on exponentiation.

\item Now define
\begin{enumerate}
\item $\fin(\gee |\ba ) = \{ h ~: h \in \fin(\gee) ~\mbox{and $h$ is not prevented by $\ba$} \}$ 
\item $\fin_s(\gee|\ba) = \{ A_h : h \in \fin(\gee|\ba) \}$ 
\end{enumerate}

\br
\item Say that $(I, \de, \gee|\ba)$ is a $(\lambda, \mu)$-pre-good triple when for some enumeration of $\gee$ and $\ba$:
\begin{enumerate}
\item $I$ is an infinite set of cardinality $\lambda$
\item $\de$ is a regular filter on $I$
\item $\gee$ is a family of functions from $I$ to $2$
\item $\ba$ is a $\mu^+$-c.c. complete Boolean algebra, $|\ba| \leq 2^\lambda$
\item $\gee$ is constrained by $\ba$ modulo $\de$
\item for each $A_h \in \fin_s(\gee|\ba)$, $A_h \neq \emptyset \mod \de$
\item We omit ``pre'' when $\de$ is maximal subject to these conditions.
\end{enumerate}

\br
\item Say that $(I, \de, (\gee|\ba) \cup \eff)$ is a $(\lambda, \kappa, \mu)$-pre-good triple when:
\begin{enumerate}
\item $\eff \subseteq {^I\kappa}$, $\gee \subseteq {^I2}$, $\eff \cap \gee = \emptyset$
\item $(I, \de, \gee|\ba)$ is a $(\lambda, \mu)$-pre-good triple, so in particular $\ba$ is a $\mu^+$-c.c. Boolean algebra
\item for each $A_h \in \fin_s(\gee|\ba)$ and each $A_j \in \fin_s(\eff)$, 
$A_h \cap A_j \neq \emptyset \mod \de$.
\end{enumerate}

\br
$($N.b. $(\lambda, \kappa, \mu)$ means $\lambda = |I|$, $\kappa$ is the range of $f \in \eff$, and $\ba$ has the $\mu^+$-c.c.$)$ 
\end{enumerate}

\br
\item In the current paper, we focus on the case where $\mu < \lambda \leq 2^\mu$, $\ba$ has the $\mu^+$-c.c.,
and $\eff \subseteq {^I\lambda}$. To simplify notation, we will write ``$(\lambda, \mu)$-pre-good triple'' or
``$(\lambda, \mu)$-good triple'' for this case, or simply ``pre-good'' and ``good'' when the cardinal constraints are clear from context. 
\end{defn}

The next observation verifies that constraint by a Boolean algebra still yields a filter. 

\begin{obs} \label{o:works}
Let $\lambda, I, \de, \gee, \ba$ be as in Definition \ref{d:ba-filter1}. 
Suppose $\eff$ is a family of functions from $\lambda$ onto $\mu$, $\mu \leq \lambda$,
such that $\eff \cap \gee = \emptyset$ and $(I, \de, \gee \cup \eff)$ 
is a pre-good triple, so in particular $\gee \cup \eff$ is independent $\mod \de$. 

Then letting $\de_1$ be the filter generated by $\de \cup [\gee|\ba]$, in the notation of Definition \ref{d:m1}, we have that:
\begin{enumerate}
\item[(I)] $\de_1$ is a filter on $I$, and 
\item[(II)] for every $h \in \fin(\eff)$ and $\gamma < \gamma_*$, $A_h \cap B_\gamma \neq \emptyset \mod \de_1$. 
\end{enumerate}
\end{obs}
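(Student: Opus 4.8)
The plan is to verify the two clauses directly from the definitions, treating (I) as essentially a consistency check and (II) as the substantive point. For (I), I would recall that $\de_1$ is by definition the filter generated by $\de \cup [\gee|\ba]$, so all I need is that this family has the finite intersection property; equivalently, that no finite intersection of sets in $\de$ with finitely many sets from $[\gee|\ba]$ is empty. A set in $[\gee|\ba]$ has the form $I \setminus \sigma(B_{\gamma_0},\dots,B_{\gamma_{n-1}})$ where $\ba \models ``\sigma(\mb_{\gamma_0},\dots,\mb_{\gamma_{n-1}})=0"$, i.e. its complement is one of the ``prevented'' configurations. Since $\de$ is already known (by the pre-good triple hypothesis) to make every unprevented finite configuration $A_h$ with $h \in \fin(\gee|\ba)$ positive, I would expand the given intersection into a union of atoms of the finite subalgebra generated by the relevant $B_\gamma$'s, discard the prevented atoms (which correspond precisely to the sets being thrown away), and observe that at least one unprevented atom survives and is positive mod $\de$. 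The key algebraic fact is that a Boolean term over the $\mb_\gamma$'s is zero in $\ba$ iff every ``atom'' below it (in the finite subalgebra generated by those $\mb_\gamma$'s) is zero in $\ba$; since $\jj$-preimages are being used, zero in $\ba$ corresponds to prevented.

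For (II), fix $h \in \fin(\eff)$ and $\gamma < \gamma_*$; I want $A_h \cap B_\gamma \neq \emptyset \mod \de_1$. By the description of $\de_1$, a set is zero mod $\de_1$ iff its intersection with some finite intersection of elements of $[\gee|\ba]$ is zero mod $\de$. So it suffices to show: for any Boolean term $\sigma$ and indices $\gamma_0,\dots,\gamma_{n-1}$ with $\sigma(\mb_{\gamma_0},\dots,\mb_{\gamma_{n-1}}) = 0_\ba$, we have
\[ A_h \cap B_\gamma \cap \bigl(I \setminus \sigma(B_{\gamma_0},\dots,B_{\gamma_{n-1}})\bigr) \neq \emptyset \mod \de. \]
Here I would decompose over the atoms of the finite subalgebra of $\ba$ generated by $\mb_\gamma, \mb_{\gamma_0},\dots,\mb_{\gamma_{n-1}}$: pick an atom $\mathbf{c}$ with $\mathbf{c} \leq \mb_\gamma$ and $\mathbf{c} \not\leq \sigma(\mb_{\gamma_0},\dots,\mb_{\gamma_{n-1}})$ — such an atom exists because $\mb_\gamma \neq 0$ while $\sigma(\dots) = 0$, so $\mb_\gamma \setminus \sigma(\dots) = \mb_\gamma \neq 0$. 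This atom is a configuration of the $B$'s which is \emph{not} prevented by $\ba$, hence corresponds to some $h' \in \fin(\gee|\ba)$ with $A_{h'} \subseteq B_\gamma \cap (I\setminus \sigma(\dots))$. Since $\eff \cap \gee = \emptyset$ and $(I,\de,\gee\cup\eff)$ is a pre-good triple (so $\gee\cup\eff$ is independent mod $\de$), we get $A_h \cap A_{h'} \neq \emptyset \mod \de$, which is exactly what is needed.

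I expect the main obstacle to be purely bookkeeping: keeping straight the correspondence between (a) Boolean terms in the $\mb_\gamma$'s that evaluate to $0_\ba$, (b) the sets $\sigma(B_{\gamma_0},\dots)$ whose complements generate $[\gee|\ba]$, and (c) the notion of ``prevented'' $h \in \fin(\gee)$ from Definition \ref{d:ba-filter1}(3), and verifying that an atom of a finite subalgebra of $\ba$ not lying below a given zero term genuinely gives rise to an unprevented $h' \in \fin(\gee|\ba)$. Once that dictionary is set up, both (I) and (II) reduce to the independence of $\gee \cup \eff$ modulo $\de$ together with the observation that a nonzero element of $\ba$ minus a zero element is still nonzero. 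No genuinely hard estimate is involved; the $\mu^+$-c.c. hypothesis is not even needed for this observation (it enters later, in Observation \ref{o:support} and the existence proof).
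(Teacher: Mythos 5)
Your proposal is correct and follows essentially the same route as the paper, which puts $\neg\sigma$ in disjunctive normal form, picks a disjunct $\tau$ that is nonzero in $\ba$, reads off an unprevented $h' \in \fin(\gee)$ from the literals of $\tau$, and concludes via independence of $\gee \cup \eff$ modulo $\de$; your decomposition into atoms of the finite subalgebra generated by the relevant $\mb_\gamma$'s is the same device. You are slightly more explicit than the paper on clause (II) in insisting that the chosen nonzero atom lie below $\mb_\gamma \cap (1 - \sigma(\dots))$ rather than merely below $1-\sigma(\dots)$, which is the right precaution, and your remark that the $\mu^+$-c.c.\ hypothesis plays no role here is also accurate.
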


\begin{proof}
Since $\operatorname{Cond}$ is closed under finite disjunction it suffices to
show that any one of its elements $\sigma(\overline{B})$ has $\de_0$-nontrivial complement. To see this, put the negation of the corresponding Boolean term,
$\neg \sigma(\overline{x})$, in disjunctive normal form. Since $\ba \models$ ``$\neg \sigma(\mb_{\gamma_0}, \dots \mb_{\gamma_{n-1}}) = 1_\ba$'', we can choose
a disjunct $\tau$ which is nonzero in $\ba$. Then $\tau(\overline{B})$ will be a conjunction of literals, from which we can inductively construct
$A_h \in \fin(\gee)$ such that $A_h \subseteq \tau(\overline{B}) \mod \de_0$, simply by replacing each literal of the form ``$B_{\gamma_i}$''
by the condition $g_{\gamma_i} = 1$ and each literal of the form ``$\neg B_{\gamma_j}$'' by the condition $g_{\gamma_j} = 0$. Recall that
the range of each $g \in \gee$ is $\{0,1\}$. By choice of $\tau$,
this $h$ is indeed consistent so $A_h \neq \emptyset \mod \de_0$ since the family $\gee$ is independent.
Moreover $A_h$ is contained in $\lambda \setminus \sigma(\overline{B})$ by construction.
This shows that the complement of any set in $\operatorname{Cond}$ contained some element of $\fins(\gee)$ modulo $\de_0$, which completes the proof
of (I)-(II).
\end{proof}

For completeness we spell out that such objects exist. 

\begin{cor} \label{f:basic}
Let $\kappa \leq \lambda$ and let $\ba$ be a $\kappa^+$-c.c. complete Boolean algebra. Then there exist $\de, \eff, \gee$ such that:
\begin{enumerate}
\item $\de$ is a regular filter on $I$, $|I| = \lambda$
\item $\eff$ is a family of functions from $\lambda$ into $\kappa$, $|\eff| = 2^\lambda$
\item $\gee$ is a family of functions from $\lambda$ into $2$, $|\gee| = |\ba|$
\item $\eff \cap \gee = \emptyset$
\end{enumerate}
and $(I, \de, (\gee|\ba) \cup \eff)$ is a $(\lambda, \kappa)$-good triple.
\end{cor}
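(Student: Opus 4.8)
The plan is to combine the standard existence result for independent families of functions (Theorem \ref{thm-iff}/Corollary \ref{thm-2}) with Observation \ref{o:works}, which is exactly the tool that lets us trade a genuinely independent family for a Boolean-constrained one while preserving the filter properties. First I would fix an index set $I$ with $|I| = \lambda$ and, appealing to Theorem \ref{thm-iff} (or its corollary), produce a regular filter $\de_0$ on $I$ together with a single independent family $\mathcal{H} \subseteq {^I\lambda}$ of size $2^\lambda$, independent modulo $\de_0$. Since $2^\lambda = 2^\lambda + |\ba|$ (as $|\ba| \leq 2^\lambda$), I can split $\mathcal{H}$ into two disjoint pieces: $\eff$, of size $2^\lambda$, whose members I compose with a fixed surjection $\lambda \to \kappa$ so that $\eff \subseteq {^I\kappa}$ with each $f$ onto $\kappa$; and $\gee_0$, of size $|\ba|$, whose members I compose with a fixed surjection $\lambda \to 2$ so that $\gee \subseteq {^I2}$. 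Composing independent functions with surjections onto smaller sets preserves independence modulo $\de_0$, so $\eff \cup \gee$ is still independent modulo $\de_0$, and $\eff \cap \gee = \emptyset$. This establishes clauses (1)--(4) of the statement at the level of the ground filter.

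Next I would pass to a maximal filter: extend $\de_0$ to a filter $\de_0'$ which is maximal subject to $\eff \cup \gee$ remaining independent modulo it (a Zorn's lemma argument, using that independence of a fixed family is preserved under increasing unions of filters; this is the standard move, cf. \cite{Sh:c} Chapter VI). Then $(I, \de_0', \gee \cup \eff)$ is a pre-good triple in the sense needed to invoke Observation \ref{o:works}. Now fix enumerations $\langle \mb_\gamma : \gamma < \gamma_* \rangle$ of $\ba \setminus \{0_\ba\}$ and $\langle g_\gamma : \gamma < \gamma_* \rangle$ of $\gee$ with $\gamma_* = |\ba|$, which is legitimate since $|\gee| = |\ba|$; this defines $B_\gamma = g_\gamma^{-1}\{1\}$ and hence the set $[\gee|\ba]$ of Definition \ref{d:m1}. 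Let $\de$ be the filter generated by $\de_0' \cup [\gee|\ba]$. By Observation \ref{o:works}, $\de$ is indeed a filter on $I$ (part (I)) and for every $h \in \fin(\eff)$ and every $\gamma < \gamma_*$ we have $A_h \cap B_\gamma \neq \emptyset \bmod \de$ (part (II)). Note $\de$ is still regular, since it extends the regular filter $\de_0$ and regularity is inherited by larger filters. So clauses (1)--(4) continue to hold with $\de$ in place of $\de_0$.

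It remains to verify that $(I, \de, (\gee|\ba) \cup \eff)$ is a $(\lambda,\kappa)$-good triple in the sense of Definition \ref{d:ba-filter1}(6)--(7): that $\gee$ is constrained by $\ba$ modulo $\de$ (immediate, since $\de \supseteq$ the filter generated by $[\gee|\ba]$ by construction), that for each $A_h \in \fins(\gee|\ba)$ we have $A_h \neq \emptyset \bmod \de$, that for each such $A_h$ and each $A_j \in \fins(\eff)$ we have $A_h \cap A_j \neq \emptyset \bmod \de$, and then passing to a maximal such $\de$ to drop the ``pre.'' The non-emptiness claims follow from part (II) of Observation \ref{o:works} together with the argument inside its proof: an $h \in \fin(\gee|\ba)$ is, by definition, not prevented by $\ba$, so the corresponding intersection of literals is nonzero in $\ba$, hence the corresponding disjunct survives, hence $A_h$ contains modulo $\de_0$ a nonempty set from $\fins(\gee)$; crossing with any $A_j \in \fins(\eff)$ stays nonempty because $\eff \cup \gee$ is independent modulo $\de_0'$ and the Boolean constraints only involve $\gee$, not $\eff$. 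Finally, extend $\de$ to be maximal subject to these good-triple conditions (again Zorn, noting these conditions are closed under increasing unions), yielding a genuine $(\lambda,\kappa)$-good triple. I expect the main point requiring care — though it is not deep — is the bookkeeping in the second paragraph: ensuring the enumeration of $\gee$ matches the enumeration of $\ba$ used to define $[\gee|\ba]$, and checking that taking the filter generated by $[\gee|\ba]$ does not collide with maximality of $\de_0'$, which is precisely what Observation \ref{o:works} is designed to guarantee.
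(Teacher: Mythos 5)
Your proof is correct and takes essentially the same approach as the paper: obtain a regular filter and an independent family from Corollary \ref{thm-2}, split the family into $\eff$ and $\gee$ with the prescribed ranges, define $[\gee|\ba]$ and invoke Observation \ref{o:works} to see the generated filter is proper and the resulting triple is pre-good, then pass to a maximal such filter. The only (harmless) deviation is your intermediate step of first extending to a filter $\de_0'$ maximal for $\eff\cup\gee$-independence before adjoining $[\gee|\ba]$, whereas the paper adjoins $[\gee|\ba]$ directly and maximalizes only once at the end; Observation \ref{o:works} does not need maximality of the input filter, so your extra step is unnecessary but does no damage.
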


\begin{proof}
Let $\eff_0$ be the independent family given by Corollary \ref{thm-2} above. Without loss of generality we can write $\eff_0$ as the disjoint 
union of $\eff$ and $\gee$ satisfying (2)-(3). As each $A_h \in \fin_s(\eff \cup \gee)$ has cardinality $\lambda$, 
$(I, \de_0, \eff \cup \gee)$ remains pre-good for $\de_0 = \{ A \subseteq \lambda : | \lambda \setminus A | < \lambda \}$. 

Let $\gamma_* = |\ba|$.
Let $\langle f_\alpha : \alpha < 2^\lambda \rangle$, $\langle g_\gamma : \gamma < \gamma_* \rangle$, $\langle \mb_\gamma : \gamma < \gamma_* \rangle$
list $\eff$, $\gee$, and $\ba \setminus \{ 0_\ba \}$ respectively.
Let $B_\gamma = g^{-1}_\gamma\{1\}$ for $\gamma < \gamma_*$, and as usual define the set of conditions:
\begin{align*}
\operatorname{Cond} = \{ \sigma(B_{\gamma_0}, \dots B_{\gamma_{n-1}}) ~~:~~ & \sigma(x_0, \dots x_{n-1}) ~\mbox{is a Boolean term and} \\
& \gamma_0, \dots, \gamma_{n-1} < \gamma_* = |\ba| ~\mbox{are such that} \\
& \ba \models \mbox{``$ \sigma(\mb_{\gamma_0}, \dots \mb_{\gamma_{n-1}}) = 0$''} \}
\end{align*}
Let $\de_1$ be the filter generated by $\mathcal{A} = \{ X : X \subseteq \lambda, \lambda \setminus X \in \operatorname{Cond} \} \cup \de_0$.
Then by Observation \ref{o:works} $\de_1$ is a filter on $\lambda$ and $(I, \de, (\gee|\ba) \cup \eff)$ is $(\lambda, \kappa)$-pre-good.
To finish, let $\de \supseteq \de_1$ be maximal subject to the constraint that $(I, \de, (\gee|\ba) \cup \eff)$ is $(\lambda, \kappa)$-pre-good.
\end{proof}

\begin{obs} \label{o:cong}
If $(I, \de, \gee|\ba)$ is a good triple, then there is a 
surjective homomorphism  $\jj : \mcp(I) \rightarrow \ba = \ba_{2^\lambda, \mu}$ such that $\de = \jj^{-1}(\{ 1_\ba \})$.  
\end{obs}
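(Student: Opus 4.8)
The goal is to extract a surjective homomorphism $\jj : \mcp(I) \to \ba_{2^\lambda,\mu}$ whose kernel (preimage of $1_\ba$) is exactly $\de$, starting from a good triple $(I, \de, \gee|\ba)$. The natural map is dictated by the data: $\gee = \{ g_\gamma : \gamma < \gamma_* \}$ (with $\gamma_* = |\ba|$) comes with a fixed enumeration $\langle \mb_\gamma : \gamma < \gamma_* \rangle$ of $\ba \setminus \{0_\ba\}$, and $B_\gamma := g_\gamma^{-1}\{1\}$. First I would send each generator $x_{\alpha,\epsilon}$ of $\ba$ (equivalently, each $\mb_\gamma$) to the class of $B_\gamma$ in $\mcp(I)/\de$, and check this respects the defining relations of $\ba_{2^\lambda,\mu}$: since $\gee$ is a family of functions into $2$ organized into partitions, $B_{\gamma} \cap B_{\gamma'} = \emptyset \bmod \de$ whenever $\mb_\gamma, \mb_{\gamma'}$ are distinct blocks of the same partition (this is a non-prevented-but-empty condition, hence in $[\gee|\ba] \subseteq \de$). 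So the assignment extends to a homomorphism from the free-except-for-partition-relations Boolean algebra; completeness of $\ba$ requires one to check the map extends to the completion, which follows since $\mcp(I)/\de$ is complete and we can send suprema to suprema (or, more carefully, use that $\ba_{2^\lambda,\mu}$ is generated as a complete BA by the $x_{\alpha,\epsilon}$ and the relations are preserved).

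The two things that actually need work are \textbf{well-definedness relative to all of $\ba$} (not just generators) and the \textbf{kernel computation}. For well-definedness: whenever $\ba \models \sigma(\mb_{\gamma_0}, \dots, \mb_{\gamma_{n-1}}) = 0$, we need $\sigma(B_{\gamma_0}, \dots, B_{\gamma_{n-1}}) = \emptyset \bmod \de$; but this is precisely membership in $\operatorname{Cond}$, and since $(I,\de,\gee|\ba)$ is a good triple we have $\de \supseteq$ the filter generated by $[\gee|\ba]$, so $I \setminus \sigma(\overline{B}) \in \de$, i.e. $\sigma(\overline{B}) = \emptyset \bmod \de$. This shows the map $\mb_\gamma \mapsto [B_\gamma]$ is a well-defined homomorphism $\jj : \ba \to \mcp(I)/\de$; composing with a section (or working directly) gives $\jj : \mcp(I) \to \ba$ as a homomorphism. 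The subtle direction of the kernel computation, and what I expect to be \textbf{the main obstacle}, is showing $\jj^{-1}(\{1_\ba\}) = \de$ exactly, i.e. that $\jj$ is injective on $\mcp(I)/\de$, equivalently that no $B \notin \de$ maps to $1_\ba$ and more generally that $\jj$ separates $\de$-classes. Surjectivity is comparatively easy: every $\mb_\gamma \in \ba \setminus \{0\}$ is hit by $[B_\gamma]$, and $0_\ba$ is hit by $\emptyset$, so $\jj$ is onto.

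For the kernel: suppose $A \subseteq I$ with $A \neq \emptyset \bmod \de$; I want $\jj(A) \neq 1_\ba$, in fact I want to pin down $\jj(A)$ precisely. The maximality of $\de$ in the good triple is the key leverage. Using Observation \ref{o:support}-style density (the translation of Fact \ref{f:cc} / Observation \ref{good-dense}), the sets $\fin_s(\gee|\ba)$ are dense in $\mcp(I)/\de$, so modulo $\de$ every $A$ is (a $\de$-class of) a union of sets $A_h$ with $h \in \fin(\gee|\ba)$; each such $A_h = \bigcap_{\gamma \in \dm(h)} B_\gamma^{h(\gamma)}$ maps under $\jj$ to $\bigcap \mb_\gamma^{h(\gamma)}$, which is nonzero in $\ba$ precisely because $h$ is \emph{not} prevented by $\ba$. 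If $\jj(A) = 1_\ba$ but $A \notin \de$, then $I \setminus A \neq \emptyset \bmod \de$, so it contains some $A_h \neq \emptyset \bmod\de$ with $\jj(A_h) \neq 0_\ba$; but $A_h \cap A = \emptyset \bmod \de$ forces $\jj(A_h) \leq 1 - \jj(A) = 0_\ba$, a contradiction. The real content is making the density statement precise in the $\ba$-constrained setting: one must verify that the good-triple maximality of $\de$ forces, for every $A \subseteq I$, the existence of $A_h \in \fin_s(\gee|\ba)$ with $A_h \subseteq A \bmod \de$ whenever $A \neq \emptyset \bmod\de$ — this is where the argument of Observation \ref{o:works} (complement of a $\operatorname{Cond}$ set contains some $\fin_s(\gee)$ member) combines with maximality to rule out any $A$ that is $\de$-positive yet disjoint mod $\de$ from every non-prevented $A_h$. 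Once that density/dichotomy is in hand, the identity $\jj^{-1}(\{1_\ba\}) = \de$ follows as above, completing the proof.
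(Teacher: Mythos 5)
The paper states Observation \ref{o:cong} without proof, so I am assessing correctness rather than comparing to the paper's argument.

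Your overall strategy is sound and the two pieces you single out — well-definedness via $\operatorname{Cond}$, and the kernel/injectivity argument via density — are exactly right. But your first paragraph introduces an unnecessary obstacle that then drives the rest of the write-up in a confused direction. You set the map up on the \emph{generators} $x_{\alpha,\epsilon}$ of $\ba$ and then worry about "extending to the completion" and whether $\mcp(I)/\de$ is complete. This is not how Definition \ref{d:m1} works: the enumeration $\langle \mb_\gamma : \gamma < \gamma_* \rangle$ ranges over \emph{all} of $\ba \setminus \{0_\ba\}$, not just the generators, $\gee$ has one function $g_\gamma$ for every such $\mb_\gamma$, and $\operatorname{Cond}$ records every Boolean term that evaluates to $0_\ba$ on any tuple from $\ba\setminus\{0_\ba\}$. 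So the correspondence $\mb_\gamma \leftrightarrow B_\gamma$ is already a bijection between $\ba\setminus\{0_\ba\}$ and $\{B_\gamma : \gamma < \gamma_*\}$, and $\operatorname{Cond}\subseteq\de$ makes it preserve all Boolean relations exactly modulo $\de$ (for $\sigma(\overline{\mb})=\mb_{\gamma'}$ apply the condition to $\sigma(\overline{x})\,\Delta\, x_{\gamma'}$). No completion step occurs, no claim that $\mcp(I)/\de$ is complete is needed a priori, and "composing with a section" — which never actually produces a map $\mcp(I)\to\ba$ from a map $\ba\to\mcp(I)/\de$ — can be dropped entirely.

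The clean construction is to define $\jj$ directly: for $A\subseteq I$, set $\jj(A) = \bigvee_\ba \{ \mb_\gamma : B_\gamma \subseteq A \bmod \de \}$, using only completeness of $\ba$, which is given. One then checks $\jj$ is a homomorphism; that $\jj(B_\gamma)=\mb_\gamma$ for every $\gamma$ (your surjectivity remark, which is indeed correct once one sees the $\mb_\gamma$ exhaust $\ba\setminus\{0_\ba\}$); and that $\jj^{-1}(\{1_\ba\})=\de$, for which density of $\fin_s(\gee|\ba)$ modulo $\de$ (which follows from maximality of $\de$ in the good triple, as in Observation \ref{dense-new} with $\eff=\emptyset$) gives: if $A\notin\de$ then $I\setminus A$ contains some $A_h = B_{\gamma_1}\bmod\de$, forcing $\jj(A)\leq 1-\mb_{\gamma_1}<1_\ba$. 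In fact one can go further and show that $\jj(A)=\mb_{\gamma_0}$ forces $A = B_{\gamma_0}\bmod\de$, so the induced map $\mcp(I)/\de\to\ba$ is an isomorphism. Your kernel argument is correct as written; the gap is that you never actually pin down $\jj$ on arbitrary $A\in\mcp(I)$, and the detour through completeness of $\mcp(I)/\de$ tries to patch a problem that the setup has already dissolved.
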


\begin{obs} \label{dense-new} 
If $(I, \de, (\gee|\ba) \cup \eff)$ is a good triple, then $\fin_s((\gee|\ba) \cup \eff)$ is dense in $\mcp(I) \mod \de$.
\end{obs}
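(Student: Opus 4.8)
The plan is to reduce to Observation \ref{good-dense} (equivalently, its more general statement via Observation \ref{dense-new} itself is the target, so instead I reduce to the fact about ordinary good triples) by showing that the family $\fin_s((\gee|\ba) \cup \eff)$ separates points modulo $\de$. Concretely, density means: for every $A \subseteq I$ with $A \neq \emptyset \bmod \de$, there is some $A_h \in \fin_s((\gee|\ba) \cup \eff)$ with $\emptyset \neq A_h \subseteq A \bmod \de$. Since $\de$ is maximal subject to $(I, \de, (\gee|\ba) \cup \eff)$ being pre-good (that is what ``good'' means, Definition \ref{d:ba-filter1}(5)(g)/(6)), the strategy is the standard maximality argument: if some positive $A$ contained \emph{no} positive member of $\fin_s((\gee|\ba) \cup \eff)$, then $I \setminus A$ would be consistent to add while preserving pre-goodness, contradicting maximality.

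First I would spell out what must be checked to add $I \setminus A$ to $\de$: one needs that for every $A_h \in \fin_s(\gee|\ba)$ and every $A_j \in \fin_s(\eff)$, the set $(I \setminus A) \cap A_h \cap A_j \neq \emptyset \bmod \de$; this is exactly condition (6)(c)/(5)(f) of the pre-good triple definition applied to the extended filter. Equivalently, the assumption that density fails at $A$ says precisely that for \emph{every} such $h, j$ we have $A_h \cap A_j \subseteq A \bmod \de$ (since otherwise $A_h \cap A_j \setminus A$ would be a positive member of $\fin_s((\gee|\ba)\cup\eff)$ below $I \setminus A$ — here I use that $\fin_s$ is closed under the relevant intersections, because $\fin(\gee|\ba)$ and $\fin(\eff)$ combine: a pair $(h,j)$ with disjoint domains gives $A_{h} \cap A_j = A_{h \cup j}$, and $h \cup j$ is again a finite function not prevented by $\ba$ since $h$ isn't). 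So failure of density at $A$ gives that $I \setminus A$ is compatible, in the sense needed, with all of $\fin_s((\gee|\ba)\cup\eff)$ modulo $\de$, hence the filter generated by $\de \cup \{ I \setminus A\}$ still forms a pre-good triple with $(\gee|\ba)\cup\eff$; but $A \neq \emptyset \bmod \de$ means $I \setminus A \notin \de$, so this strictly extends $\de$, contradicting maximality.

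The one genuinely non-routine point — and the step I expect to be the main obstacle — is verifying that adjoining $I \setminus A$ does not destroy the \emph{constraint} clause, i.e. that $\gee$ remains ``constrained by $\ba$ modulo'' the larger filter and, crucially, that the combinatorics of which finite functions are ``prevented by $\ba$'' is unchanged. This is fine because $[\gee|\ba]$ and the notion of being prevented by $\ba$ depend only on the fixed enumerations of $\gee$ and $\ba$, not on $\de$; enlarging $\de$ only enlarges the collection of sets declared null and can never make a previously unprevented $h$ prevented, nor shrink $\fin_s(\gee|\ba)$. Thus the only thing at risk under the enlargement is clause (f)/(c) — positivity of the $A_h \cap A_j$ — which is precisely what the failure-of-density hypothesis hands us. Once this is in place, the contradiction with maximality is immediate, and density follows. (I would also note that Observation \ref{good-dense} is then the special case $\eff = \emptyset$, $\ba$ trivial, so the same write-up covers it, matching the forward reference in its proof.)
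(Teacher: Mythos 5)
Your approach is the same as the paper's: assume density fails at some positive $A$, deduce that the filter generated by $\de \cup \{ I \setminus A \}$ still yields a pre-good triple, and contradict the maximality of $\de$. The paper's proof is terse ("just as in the usual proof, \cite{Sh:c} VI.3") and omits your observation that enlarging $\de$ cannot alter which $h$ are prevented by $\ba$ — since that notion depends only on the fixed enumerations, not on the filter — which is a worthwhile point to make explicit; your unpacking of $\fin_s((\gee|\ba)\cup\eff)$ as the family of sets $A_h \cap A_j$ is likewise a reasonable clarification the paper leaves implicit.

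One sentence in the middle, however, is stated in the wrong direction and, read literally, inverts the argument. You write that failure of density at $A$ means ``for every such $h,j$ we have $A_h \cap A_j \subseteq A \bmod \de$.'' If that held, then $A_h \cap A_j \cap (I \setminus A) = \emptyset \bmod \de$, so $I \setminus A$ would be \emph{incompatible} with $\fin_s$, contradicting what you conclude in the next sentence. The correct reading — and what the paper writes — is that failure of density at $A$ means no positive member of $\fin_s((\gee|\ba)\cup\eff)$ lies below $A$ modulo $\de$; that is, for every $h,j$ we have $A_h \cap A_j \not\subseteq A \bmod \de$, equivalently $A_h \cap A_j \cap (I \setminus A) \neq \emptyset \bmod \de$, which is exactly what is needed to add $I \setminus A$ while keeping pre-goodness. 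Your parenthetical has a matching confusion: the would-be positive member of $\fin_s$ is $A_h \cap A_j$ itself (below $A$), not $A_h \cap A_j \setminus A$ (which is not in $\fin_s$, and lies below $I \setminus A$). Since you state the strategy correctly before this line and draw the correct conclusion after it, this reads as a transcription slip rather than a conceptual gap, but as written the step contradicts the sentence that follows it, so it needs to be corrected.
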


\begin{proof} (Just as in the usual proof, \cite{Sh:c} VI.3)
Recall that the definition of ``good triple'' assumes that $\de$ is maximal so that 
$(\gee|\ba) \cup \eff$ is independent $\mod \de$. 
Suppose for a contradiction the statement of the claim fails, i.e. 
that there is some $X \subseteq I$, $X \neq \emptyset \mod \de$ but for every 
$A_h \in \fin_s((\gee|\ba) \cup \eff)$, $A_h \not\subseteq X \mod \de$. Thus for each such $A_h$, 
$A_h \cap (I \setminus X) \neq \emptyset \mod \de$. Let $\de^\prime$ be the filter generated by 
$\de \cup \{ I \setminus X \}$. Then $(I, \de^\prime, (\gee|\ba) \cup \eff)$ is also a good triple,
contradicting the assumption about the maximality of $\de$.
\end{proof}

\br
With this notation in place, we now give two proofs. 
Recall that in proving existence of excellent filters, we will want on the one hand to ensure the quotient of $\mcp(I)$ modulo the final filter is isomorphic to a given complete Boolean algebra $\ba$, and on the other to ensure existence of excellent refinements. The final two results
of the section will form the corresponding inductive steps.

Roughly speaking, the following lemma says that if we have a filter $\de$ on $I$, a family $\eff$ with range $\lambda$, 
a family $\gee$ constrained by $\ba$ and a subset $X$ of the index set, we may extend $\de$ to 
a filter $\de^\prime$ so that $X$ is equivalent modulo $\de^\prime$ to some element of $\ba$ (by condition (d) and completeness) at the cost
of $\leq \lambda$ elements of $\eff$.

\begin{lemma} \label{l:free}
Suppose $(I, \de, (\gee|\ba) \cup \eff)$ is a $(\lambda, \lambda, \mu)$-good triple. Let $X \subseteq I$. Then there are 
$\de^\prime \supseteq \de$ and $\eff^\prime \subseteq \eff$ such that 
\begin{enumerate}
\item[(a)] $\de^\prime$ is a filter extending $\de$
\item[(b)] $|\eff| = 2^\lambda$, $|\eff \setminus \eff^\prime| \leq \lambda$
\item[(c)] $(I, \de^\prime, (\gee|\ba) \cup \eff^\prime)$ is a $(\lambda, \lambda, \mu)$-good triple
\item[(d)] $X$ is supported by $\fin_s(\gee|\ba)$ modulo $\de^\prime$
\end{enumerate}
\end{lemma}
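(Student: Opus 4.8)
The plan is to build $\de'$ as a maximal extension of the filter generated by $\de \cup \{X_0\}$ for a suitable set $X_0$ built from a few of the functions in $\eff$, using the standard ``independent functions let us decide $X$'' argument from \cite{Sh:c} VI.3, and then verify that consuming $\leq \lambda$ functions suffices because $\ba$ has the $\mu^+$-c.c. and $\mu \leq \lambda$.

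\textbf{Step 1: support $X$ approximately by $\fin_s((\gee|\ba)\cup\eff)$.} By Observation \ref{dense-new}, $\fin_s((\gee|\ba)\cup\eff)$ is dense in $\mcp(I) \bmod \de$. So by the usual argument — the one behind Fact \ref{uf} — we can find a partition $\langle Y_j : j < \lambda\rangle$ of $I$ modulo $\de$ into sets from $\fin_s((\gee|\ba)\cup\eff)$ (the $\mu^+$-c.c. of $B(\de)$, which holds since $\ba$ has the $\mu^+$-c.c. and $\mu \le \lambda$, bounds the antichain by $\lambda$; one builds the partition by induction of length $< \lambda^+$, alternating between the ``remainder inside $X$'' and the ``remainder inside $I\setminus X$'' as in the proof of Observation \ref{o:support}, and stops below $\lambda^+$). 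Each $Y_j = A_{h_j} = A_{h_j'} \cap A_{k_j}$ where $A_{h_j'} \in \fin_s(\gee|\ba)$ and $A_{k_j} \in \fin_s(\eff)$. Let $\eff^\prime$ be $\eff$ with the finitely-many functions mentioned by each $k_j$ removed; since there are $\lambda$ many $j$'s and each $k_j$ has finite domain, $|\eff\setminus\eff^\prime| \le \lambda$, giving (b).

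\textbf{Step 2: use a fresh function to decide $X$ along the partition.} Pick one $f^* \in \eff^\prime$ (onto $\lambda$), remove it from $\eff^\prime$ too (still $\le\lambda$ removed), and set
\[ X_0 = \bigcup_{j < \lambda}\bigl( Y_j \cap (f^*)^{-1}(\{j\}) \cap X \bigr) \ \cup\ \bigcup_{j<\lambda}\bigl(Y_j \cap (f^*)^{-1}(\lambda\setminus\{j\})\bigr). \]
Let $\de^\prime$ be a maximal extension of the filter generated by $\de \cup \{X_0\}$ subject to $(I,\de^\prime,(\gee|\ba)\cup\eff^\prime)$ remaining $(\lambda,\lambda,\mu)$-pre-good; such a maximal extension exists by a Zorn's-lemma argument exactly as in Corollary \ref{f:basic}, and one checks pre-goodness is preserved by the key computation: for $A_h \in \fin_s(\eff^\prime)$ and $A_{h'}\in\fin_s(\gee|\ba)$, $A_h \cap A_{h'} \cap X_0 \neq \emptyset \bmod \de$, because $A_h \cap A_{h'}$ meets some $Y_j$ positively (by the partition property), $f^*$ is independent from $\eff^\prime \cup \gee$ over $\de$ so $A_h\cap A_{h'}\cap Y_j \cap (f^*)^{-1}(\{j\})$ and $\cap(f^*)^{-1}(\lambda\setminus\{j\})$ are both positive, and inside $Y_j$ the set $X_0$ contains whichever of $X$, $I\setminus X$ is relevant on each of those two pieces. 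This gives (a) and (c).

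\textbf{Step 3: $X$ is now supported by $\fin_s(\gee|\ba) \bmod \de^\prime$.} Modulo $\de^\prime$ we have $X \supseteq X_0 \cap \bigcup_j (f^*)^{-1}(\{j\})\cap Y_j = \bigcup_j (f^*)^{-1}(\{j\}) \cap Y_j$ and dually $I\setminus X \supseteq \bigcup_j (f^*)^{-1}(\{j\}) \cap Y_j$ — wait, more carefully: on $Y_j \cap (f^*)^{-1}(\{j\})$, $X_0 = X\cap Y_j\cap(f^*)^{-1}(\{j\})$, so $X \bmod \de'$ agrees with $\bigcup_j Y_j \cap (f^*)^{-1}(\{j\})$ on each cell and hence $X = \bigcup_j (Y_j \cap (f^*)^{-1}(\{j\})) \bmod \de'$. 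Since each $Y_j \cap (f^*)^{-1}(\{j\}) \in \fin_s((\gee|\ba)\cup\eff^\prime)$, and we can further absorb the $\eff^\prime$-part and the $f^*$-part into $\de^\prime$ if needed (the point of maximality: $\fin_s(\gee|\ba)$ is dense in $B(\de^\prime)$ for the sub-quotient corresponding to $\ba$, by Observation \ref{o:cong}), we conclude $X$ is supported by $\fin_s(\gee|\ba)$ modulo $\de^\prime$, giving (d).

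\textbf{The main obstacle} I expect is making Step 3 honest: ``supported by $\fin_s(\gee|\ba)$'' must mean $X$ equals, modulo $\de^\prime$, an element coming from the Boolean algebra $\ba$ under the homomorphism of Observation \ref{o:cong} — so one genuinely needs $\de^\prime$ maximal enough that the cells $Y_j \cap (f^*)^{-1}(\{j\})$, which still involve functions from $\eff^\prime \cup \{f^*\}$, collapse into the $\gee|\ba$-part. The clean way is to not extend $\de^\prime$ to full maximality for the whole triple at once, but rather: first force $X \equiv$ a union over the partition indexed by $f^*$ as above, then separately (as in the $\fin_s$-density arguments) note that any such countable-support set over $\eff^\prime$ is $\de^\prime$-equivalent to a set supported by $\gee|\ba$ once $\de^\prime$ is taken maximal — i.e. iterate the construction, or simply cite that in a good triple $\fin_s(\gee|\ba)$ is dense in the quotient by the $\eff$-free part. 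Bookkeeping that the total number of $\eff$-functions consumed stays $\le\lambda$ across this is the only quantitative check, and it goes through since each step consumes finitely many per partition-cell and there are $\lambda$ cells.
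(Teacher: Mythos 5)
Your Step 1 (building a $\bmod\,\de$-partition of $I$ into cells from $\fin_s((\gee|\ba)\cup\eff)$ that lie inside $X$ or inside $I\setminus X$, then discarding the $\le\lambda$ functions of $\eff$ that appear) is fine, modulo a small slip: $B(\de)$ for such a triple has the $\lambda^+$-c.c., not the $\mu^+$-c.c., since $\eff$ has range $\lambda$ (Fact \ref{f:cc}); but $\lambda^+$-c.c.\ is what the halting argument needs, so this is harmless.

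The real gap is Steps 2--3, and you already sensed it. The crux is that after Step 1 each cell is $Y_j = A_{h_j'}\cap A_{k_j}$ with $A_{k_j}\in\fin_s(\eff)$ genuinely present, and the $A_{k_j}$ are \emph{pairwise incompatible} (the $Y_j$ partition $I \bmod\,\de$), so they cannot all be absorbed into a filter. The $f^*$/$X_0$ device does not fix this: since each $Y_j$ is already decided in/out of $X\bmod\,\de$, the set $X$ was already equal $\bmod\,\de$ to $\bigcup_{j\in S_1}Y_j$ \emph{before} you introduced $X_0$; adding $X_0$ to the filter does not strip the $\eff$-part off any $Y_j$, it only trivializes $X$ on the diagonal cells of the new $f^*$-partition. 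In particular $\bigcup_j Y_j\cap (f^*)^{-1}(\{j\})$ is still $\eff$-dependent, and the final claim that ``any such countable-support set over $\eff'$ is $\de'$-equivalent to a set supported by $\gee|\ba$ once $\de'$ is maximal'' is simply false: in a good triple $(I,\de',(\gee|\ba)\cup\eff')$, $\fin_s(\eff')$ is an honest independent family on top of $\gee|\ba$, so its elements are \emph{not} $\de'$-equivalent to $\gee|\ba$-supported sets. Citing density or ``iterating'' here is exactly the work that still has to be done.

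The paper's proof interleaves the two operations you tried to decouple. At each successor stage it uses density once to pick a \emph{single} pair $A_h\in\fin_s(\gee|\ba)$, $A_{h'}\in\fin_s(\eff)$ with $A_h\cap A_{h'}$ inside the current remainder of $X$ (or of $I\setminus X$), then it \emph{puts $A_{h'}$ into the filter} and removes $\dm(h')$ from $\eff$. This instantly promotes $A_h$ to a pure $\gee|\ba$-cell inside $X\bmod$ the new filter. Because the $\eff$-parts are chosen one at a time from the ever-shrinking $\eff^\alpha$, the sets added to the filter are mutually compatible and never clash, which is exactly what fails if you try to build the whole $\{Y_j\}$-partition first and absorb all the $A_{k_j}$ afterwards. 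The $\lambda^+$-c.c. then halts the induction below $\lambda^+$, giving both $|\eff\setminus\eff'|\le\lambda$ and the required $\gee|\ba$-partition witnessing (d). I would recommend rewriting Steps 2--3 to follow this interleaved scheme; the $f^*$ trick can be dropped entirely.
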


\begin{proof}
Our strategy is as follows. 
By inductively consuming functions from $\eff$, we build a partition of $I$ using elements from $\fin_s(\gee|\ba)$ which 
are either entirely inside or entirely outside the fixed set $X$. 
The internal approximation at stage $\alpha$ we call $J^1_\alpha$, and the external approximation we call $J^0_\alpha$.
Since $X$ need not be supported by $\fin_s(\gee|\ba) \mod \de$, we must continually consume functions from 
$\eff$ in order to ``clarify the picture'' at stage $\alpha$ in a larger filter $\de_\alpha$, where we can continue to construct the partition.
We consume finitely many functions from $\eff$ at each successor stage, and at limits take unions. 
We apply Fact \ref{f:cc} to show the construction will stop before $\lambda^+$. 
When the induction stops, we have our desired partition (d), and this will complete the proof. 

More formally, we try to choose by induction on $\alpha < \lambda^+$ objects $J^\alpha_0, J^\alpha_1, \eff^\alpha, \de_\alpha$ to satisfy:
\begin{enumerate}
\item $J^\alpha_1 \subseteq \fin_s(\gee|\ba)$, and $\bigcup J^\alpha_1 \subseteq X \mod \de_\alpha$
\item $A, A^\prime \in J^\alpha_1 \implies A \cap A^\prime = \emptyset \mod \de_\alpha$
\item $\beta < \alpha \implies J^\beta_1 \subseteq J^\alpha_1$
\item $J^\alpha_0 \subseteq \fin_s(\gee|\ba)$, and $\bigcup J^\alpha_1 \subseteq I \setminus X \mod \de_\alpha$
\item $A, A^\prime \in J^\alpha_1 \implies A \cap A^\prime = \emptyset \mod \de_\alpha$
\item $\beta < \alpha \implies J^\beta_0 \subseteq J^\alpha_0$
\item $\de_\alpha$ is a filter extending $\de$, with $(I, \de_\alpha, (\gee|\ba) \cup \eff^\alpha)$ a good triple
\item $\beta < \alpha  \implies \de_\beta \subseteq \de_\alpha$
\item for $\alpha$ limit, $\de_\alpha = \bigcup \{ \de_\beta : \beta < \alpha \}$
\item $\eff^\alpha \subseteq \eff$, with $\beta < \alpha  \implies \eff^\alpha \subseteq \eff^\beta$
\item for $\alpha$ limit, $\eff^\alpha = \bigcap \{ \eff^\beta : \beta < \alpha \}$
\item for $\alpha = \beta + 1$, $|\eff^\beta \setminus \eff^\alpha| < \aleph_0$
\end{enumerate}

For $\alpha = 0$, let $\de_0 = \de$, $\eff_0 = \eff$. Choose $J^0_1, J^0_0$ maximal subject to the constraints
(1)-(4). 

For $\alpha$ limit, define $\de_\alpha$ in accordance with (9), $\eff^\alpha$ in accordance with (11). Likewise, let
$J^\alpha_1 = \bigcup \{ J^\beta_1 : \beta < \alpha \}$ and let 
$J^\alpha_0 = \bigcup \{ J^\beta_0 : \beta < \alpha \}$. 

For successor stages, we distinguish between even (increase $J^\beta_0$) and odd (increase $J^\beta_1$).

First consider $\alpha = \beta+1$. If both $X \setminus J^\beta_1 = \emptyset \mod \de_\beta$ and
$(I \setminus X) \setminus J^\beta_0 = \emptyset \mod \de_\beta$, then we satisfy (d) and finish. If 
$X \setminus J^\beta_1 = \emptyset \mod \de_\beta$, we have finished the construction of $J_1 = J^\beta_1$. 
So suppose $X \setminus J^\beta_1 = \emptyset \mod \de_\beta$. 

Apply Claim \ref{dense-new} to find sets $A_h \in \fin_s(\gee|\ba)$ and $A_{h^\prime} \in \fin_s(\eff)$ such that 
\[ \left( A_h \cap A_i \right) \subseteq \left( X \setminus J^\beta_1 \right) \mod \de \]
Keeping in mind the asymmetry between the roles of $\eff$ and $\gee$ in this proof,
let $\eff^\alpha = \eff^\beta \setminus \{ \dm h^\prime \}$, which satisfies (12) by definition of $\fin_s(\eff)$.
Now let $\de_\alpha$ be the filter generated by $\de_\beta \cup \{ A_{h^\prime} \}$. Clearly this is a filter by condition (7)$_\beta$,
and by construction, $(I, \de_\alpha, (\gee|\ba) \cup F^\alpha)$ will satisfy (7)$_\alpha$.
Finally, let $J^\alpha_1 = J^\beta_1 \cup \{ A_h \}$ and let $J^\alpha_0 = J^\beta_0$. 

The case $\alpha = \beta + 2$ is parallel to the odd case but with $J^{\beta+1}_0, J^\alpha_0$ replacing
$J^\beta_1, J^\alpha_1$, and $I \setminus X$ replacing $X$.

Without loss of generality, let each $\de_\alpha$ be maximal subject to the fact that 
$(I, \de_\alpha, (\gee|\ba) \cup \eff^\alpha)$ is a pre-good triple.

Note that for each $\alpha \leq \lambda^+$, $CC(B(\de_\alpha)) \leq \lambda^+$ by condition (7) and Fact \ref{f:cc}, and moreover
$J^\alpha_0 \cup J^\alpha_1$ is a set of pairwise disjoint elements of $B(\de_\alpha)$ with
$|J^\alpha_0 \cup J^\alpha_1| \geq |\alpha|$.  
Thus the length of this construction is bounded below $\lambda^+$. 
In other words, at some point $\alpha < \lambda^+$
both $X \setminus J^\beta_1 = \emptyset \mod \de_\alpha$ and $(I \setminus X) \setminus J^\beta_0 = \emptyset \mod \de_\alpha$,
and here we take $\de^\prime = \de_\alpha$ to finish the proof. 
\end{proof}

To conclude this section, we show how to construct an excellent refinement. 

\begin{claim} \label{e-ind-step}
Suppose $(I, \de, (\gee|\ba) \cup \eff)$ is $(\lambda, \lambda, \mu)$-good. 
Let $f \in \eff$, $\eff^\prime = \eff \setminus \{ f \}$, and let $\overline{A} = \langle A_u : u \in \lao \rangle$ 
be a sequence of elements of $\mcp(I)$ 
such that for each $A_u \in \overline{A}$ and each $A_h \in \fin_s(\eff)$, $A_h \cap A_u \neq \emptyset \mod \de$.

Then there is a filter $\de^\prime \supseteq \de$ and a sequence
$\langle B_u : u \in \lao \rangle$ of elements of $\mcp(I)$ satisfying Definition \ref{d:excellent}, namely:
\begin{enumerate}
\item for each $u \in \lao$, $B_u \subseteq A_u$
\item for each $u \in \lao$, $B_u = A_u \mod \de$
\item \emph{if} $u \in \lao$ and $\sigma \in \Lambda = \Lambda_{\de, \overline{A}|_u}$, so $\sigma(\overline{A}|_{\mcp(u)}) = \emptyset \mod \de$, \emph{then} $\sigma(\overline{B}|_{\mcp(u)}) = \emptyset$
\end{enumerate}
such that, moreover, $(I, \de^\prime, (\gee|\ba) \cup \eff^\prime)$ is $(\lambda, \lambda, \mu)$-good. 
\end{claim}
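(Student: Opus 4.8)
The plan is to reduce Claim~\ref{e-ind-step} to a single application of the independent function $f$ that we are allowed to consume, exactly as in Kunen's and the second author's constructions. We are given one function $f \in \eff$ with range $\lambda$; consuming it means we get to partition $I$ (mod $\de$) into $\lambda$ pieces $\langle f^{-1}(\zeta) : \zeta < \lambda \rangle$, each of which remains positive when intersected with any $A_h \in \fin_s(\eff')$ and any member of $\fin_s(\gee|\ba)$. The key combinatorial point is that there are at most $\lambda$ distinguished terms to handle: for each $u \in \lao$ the set $\Lambda_{\de,\overline{A}|_u}$ of Boolean terms that vanish mod $\de$ on all sequences in $N(\overline{A}|_{\mcp(u)})$ is, up to logical equivalence, a set of size $\leq |\mcp(\mcp(u))| \cdot \aleph_0 < \aleph_0$ for each fixed $u$, and $|\lao| = \lambda$, so altogether there are $\leq \lambda$ ``bad configurations'' to kill.

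First I would enumerate, in a list of length $\leq\lambda$, all pairs $(u,\sigma)$ with $u \in \lao$ and $\sigma \in \Lambda_{\de,\overline{A}|_u}$ — more precisely, since excellence only requires $\sigma(\overline{B}|_{\mcp(u)})=\emptyset$, it suffices to work with, for each $u$ and each $\overline{a}' \in N(\overline{A}|_{\mcp(u)})$ and each relevant Boolean term, the index set $Z_{u,\sigma,\overline{a}'} := \sigma(\overline{a}') \subseteq I$, which is $\emptyset \bmod \de$. Collect all these ``error sets''; there are $\leq\lambda$ of them and each is $\de$-null. Now use $f$: enlarge $\de$ to $\de'$ by adding, for the $\xi$-th error set $Z_\xi$ (under a fixed bijection $\xi\leftrightarrow\zeta_\xi<\lambda$), the set $I \setminus (f^{-1}(\zeta_\xi) \cap Z_\xi)$; equivalently declare $\de'$ to be generated by $\de \cup \{\, I\setminus\bigl(f^{-1}(\zeta)\cap Z_\zeta\bigr) : \zeta<\lambda \,\}$. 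Because the $f^{-1}(\zeta)$ partition $I$ mod $\de$ and each $Z_\zeta$ is $\de$-null, this is still a proper filter (any finite intersection of generators still contains a member of $\fin_s(\eff'\cup\gee)$), $\de'\supseteq\de$, and crucially $f^{-1}(\zeta)\cap Z_\zeta = \emptyset \bmod \de'$ for every $\zeta$. Then define $B_u := A_u \setminus \bigl(\bigcup\{\, f^{-1}(\zeta_\xi) : (u,\sigma,\overline{a}')=\xi\text{-th pair}\,\} \cap (\text{the corresponding }Z_\xi)\bigr)$; more cleanly, $B_u := A_u$ minus the union over all relevant $\xi$ of $f^{-1}(\zeta_\xi)\cap Z_\xi$. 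Conditions (1) and (2) of Definition~\ref{d:excellent} are then immediate: $B_u\subseteq A_u$, and $B_u = A_u \bmod \de'$ since we removed only $\de'$-null sets; and since we only ever need $B_u = A_u \bmod \de$ and $\de\subseteq\de'$, we are fine (one checks the removed sets are already $\de$-null, as each $f^{-1}(\zeta)$ is $\de$-positive but $Z_\zeta$ is $\de$-null, so $f^{-1}(\zeta)\cap Z_\zeta$ is $\de$-null). For (3): given $u$ and $\sigma \in \Lambda_{\de,\overline{A}|_u}$, the value $\sigma(\overline{B}|_{\mcp(u)})$ is computed by substituting the $B_v$, $v\subseteq u$; expanding $\sigma$ and using that each relevant $\overline{a}'\in N(\overline{A}|_{\mcp(u)})$ substitution of the $B$'s differs from the corresponding one of the $A$'s only inside the removed null sets, one gets $\sigma(\overline{B}|_{\mcp(u)}) \subseteq \bigcup Z_{u,\sigma,\overline{a}'}$ intersected appropriately with the removed pieces, which is $\emptyset$ by the choice of $\de'$ — this is the step that requires a short but careful bookkeeping argument about how $\sigma$ distributes over the set operations.

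Finally, to preserve good-triple-ness: set $\de^\prime$ maximal extending the filter just built subject to $(I,\de^\prime,(\gee|\ba)\cup\eff^\prime)$ being pre-good. The content to check is condition (c) of Definition~\ref{d:ba-filter1}(7), i.e. that for each $A_h\in\fin_s(\gee|\ba)$ and each $A_j\in\fin_s(\eff^\prime)$, $A_h\cap A_j\neq\emptyset \bmod\de^\prime$; this follows because in $\de^\prime$ we only added sets of the form $I\setminus(f^{-1}(\zeta)\cap Z_\zeta)$, and for any finite $h,j$ and finitely many $\zeta$'s the set $A_h\cap A_j\cap\bigcap_\zeta(I\setminus(f^{-1}(\zeta)\cap Z_\zeta))$ still contains $A_{h}\cap A_{j'}\setminus(\text{finitely many }\de\text{-null sets})$ where $j'$ extends $j$ by pinning $f$ to a value avoiding the finitely many $\zeta$'s — here we use that $f\notin\eff^\prime$ so we are free to further constrain $f$, and $|\rn(f)|=\lambda$ is infinite so such a value exists. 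That $\gee$ stays constrained by $\ba$ is automatic since we never touched $\gee$ or the conditions coming from $\ba$.

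The main obstacle I expect is the verification of clause (3), i.e. bookkeeping that the single function $f$ genuinely suffices to simultaneously kill all $\leq\lambda$ error sets while the substitution of the refined sequence $\overline{B}$ into an arbitrary distinguished term $\sigma$ really does land in the union of the killed pieces — one has to be careful that $\sigma$ applied to $\overline B$ is controlled not just by $\sigma$ applied to $\overline A$ but by $\sigma$ applied to \emph{all} the ``truncations'' $\overline a'\in N(\overline A|_{\mcp(u)})$, which is exactly why $\Lambda$ is defined via the operator $N(\cdot)$ in Definition~\ref{d:near}. This is also the place where the naturalness of that definition, advertised in the introduction, does the work.
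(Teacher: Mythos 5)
Your general plan is the right one (consume one function $f\in\eff$, enumerate the $\leq\lambda$ ``error conditions'' along the range of $f$, and take $\de'$ maximal pre-good at the end), but the specific construction of $\overline{B}$ and of $\de'$ does not work, and the step you flag as requiring ``careful bookkeeping'' is exactly where the argument breaks.

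Two concrete problems. First, the filter extension you propose is vacuous: since each $Z_\zeta$ is $\de$-null, so is $f^{-1}(\zeta)\cap Z_\zeta$, and hence $I\setminus(f^{-1}(\zeta)\cap Z_\zeta)$ already lies in $\de$; your ``$\de'$'' equals $\de$. The genuine reason the claim must extend the filter is different: with the paper's $B_u$ (and with yours) the difference $A_u\Delta B_u$ need \emph{not} be $\de$-null, so one must explicitly add the sets $I\setminus(A_u\Delta B_u)$ to obtain $\de'$; that is how the paper secures clause $(2)$. Second, and more seriously, your definition $B_u := A_u\setminus\bigcup_{\xi\text{ relevant to }u}(f^{-1}(\zeta_\xi)\cap Z_\xi)$ removes a $u$-dependent set from each $A_u$ with no coherence across the different $w\subseteq u$. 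Because the distinguished terms $\sigma$ contain negations, shrinking the $A_w$'s can \emph{enlarge} $\sigma(\overline{B}\rstr_{\mcp(u)})$ rather than shrink it, so the claimed containment $\sigma(\overline{B})\subseteq\bigcup Z_{u,\sigma,\overline{a}'}$ fails. (As a template: if $\sigma = x_{u_0}\cap x_{u_1}\cap(1-x_{u_0\cup u_1})$, removing a $\de$-null piece from $A_{u_0\cup u_1}$ that is not removed from $A_{u_0}, A_{u_1}$ makes points enter $\sigma(\overline{B})$.) And even ignoring this, a $\lambda$-union of $\de$-null sets need not be $\de$-null, so you cannot conclude ``$=\emptyset$'' from ``each $Z_\xi$ is $\de$-null.''

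The missing idea is pointwise uniformity indexed by $f$. Fix an enumeration $\langle u_\epsilon:\epsilon<\lambda\rangle$ of $\lao$, bundle \emph{all} vanishing terms over $u_\epsilon$ into a single $\de$-null set $Y_\epsilon=\bigcup\{\sigma(\overline{A}\rstr_{\mcp(u_\epsilon)}):\sigma(\overline{A}\rstr_{\mcp(u_\epsilon)})=\emptyset\bmod\de\}$ (a finite union, hence $\de$-null), and set
\[
B_u=\bigcup\{\,A_u\cap f^{-1}(\epsilon)\setminus Y_\epsilon\ :\ \epsilon<\lambda,\ u\subseteq u_\epsilon\,\}.
\]
Then for each $t\in I$, writing $\epsilon_t=f(t)$, one has $t\in B_w$ iff $w\subseteq u_{\epsilon_t}$ and $t\in A_w\setminus Y_{\epsilon_t}$; that is, the Boolean pattern $\langle 1_{B_w}(t):w\subseteq u\rangle$ is \emph{exactly} the truncation pattern of $\overline{A}$ at $v=u\cap u_{\epsilon_t}$ (or all zeros). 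Since $\Lambda$ is by definition closed under passing to truncations (this is the point of the operator $N$), and the truncated evaluation lies inside $Y_{\epsilon_t}$, it follows that $\sigma(\overline{B}\rstr_{\mcp(u)})(t)=0$ for every $t$. This uniformity at each $t$ is what your proposal lacks and what the single function $f$ is actually used for; independence of $f$ then ensures each $A_u\cap f^{-1}(\epsilon)\setminus Y_\epsilon$ meets every $A_h\cap A_{h'}$ with $h\in\fin(\eff')$, $h'\in\fin(\gee|\ba)$, giving pre-goodness of $(I,\de',(\gee|\ba)\cup\eff')$. Your closing paragraph on preserving the good triple is essentially right once the construction is corrected, but it must be run relative to the nontrivial $\de'$ above.
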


\begin{proof} We proceed in stages.

\step{Step 1: The exceptional sets $Y_\epsilon$.}
Let $\langle u_\epsilon : \epsilon < \lambda \rangle$ enumerate the finite
subsets of $\lambda$. 
For each $\epsilon$ let 
\[ Y_\epsilon = \bigcup \{ \sigma(\overline{A}|_{\mcp(u_\epsilon)}) : ~\mbox{$\sigma$ a Boolean term and 
$\sigma(\overline{A}|_{\mcp(u_\epsilon)}) = \emptyset \mod \de$} \}\]
Then $Y_\epsilon$ is a finite union of subsets of $\lambda$ which are $= \emptyset \mod \de$, hence 
$Y_\epsilon = \emptyset \mod \de$. 

\br
\step{Step 2: The role of ``below''.}
Suppose that  $\sigma(\overline{x}_{\mcp(u_\epsilon)}) \in  \Lambda_{\de, \overline{A}}$ and $v \subseteq u_\epsilon$. 
Let $\overline{A}^\prime$ be the sequence given by
$A^\prime_w = A_w$ if $w \subseteq v$ and $A^\prime_v = 0_\ba$ otherwise.
Then also $\sigma(\overline{A}^\prime|_{\mcp(u_\epsilon)}) \subseteq Y_{\epsilon}$, by definition of $\Lambda$ (as $0$ is a constant of the language of
Boolean algebras).

\br
\step{Step 3. Defining the filter.}
Now for $u \in \lao$ define
\[ B_u = \bigcup \{ A_u \cap f^{-1} (\epsilon) \setminus Y_\epsilon ~:~ \mbox{$\epsilon < \lambda$ and $u \subseteq u_\epsilon$} \} \]
Then $\langle B_u : u \in \lao \rangle$ is the proposed refinement. 
Let $\de^\prime$ be the filter generated by 
$\de \cup \{ X_u : u \in \lao \}$ where 
\[ X_u = I \setminus \left( B_u \Delta A_{u} \right) \]

\br
\step{Step 4. The filter is nontrivial and the triple is pre-good.}
Fix $u = u_\epsilon$. Then $f^{-1}(\epsilon) = f^{-1}(\epsilon)\setminus Y_\epsilon \mod \de$, so
\[ B_u \supseteq A_u \cap f^{-1}(\epsilon) \setminus Y_\epsilon  \neq \emptyset \mod \de \]
where ``$\neq \emptyset$'' holds by the hypothesis of independence, since $f \in \eff$. 
Likewise we assumed that any $A_h \in \fin_s(\eff^\prime)$,
$A_u \cap A_h \neq \emptyset \mod \de$. Recall $\eff^\prime = \eff \setminus \{ f \}$. Since $f \notin \eff^\prime$, we therefore have 
\[ B_u \supseteq A_u \cap A_h \cap (f^{-1}(\epsilon) \setminus Y_\epsilon) \neq \emptyset \mod \de \]
\noindent Thus $\de^\prime$ is a filter. Now consider $A_{h^\prime} \in \fin_s(\gee|\ba)$. There are two cases.
If $A_u \cap A_h \cap A_{h^\prime} \neq \emptyset \mod \de$, then this intersection is contained in $B_u$
$\mod \de$. Otherwise, $A_h \cap A_{h^\prime} \subseteq (I \setminus A_u) \mod \de$. In either case,
$A_h \cap A_{h^\prime}$ does not nontrivially intersect $(I \setminus (A_u \Delta B_u)) \mod \de$, 
and so remains nonempty $\mod \de^\prime$.
This shows that $(I, \de^\prime, (\gee|\ba) \cup \eff^\prime)$ is pre-good triple.

\br
\step{Step 5. The sequence $\overline{B}$ is excellent.} Write $\overline{B} = \langle B_u : u \in \lao \rangle$.
We have shown (1)-(2) from the statement of the claim.
For condition (3), let $u \in \lao$ and $\sigma(\overline{x}_{\mcp(u)}) \in \Lambda$ be given. 
Suppose for a contradiction that there were $t \in \sigma(\overline{B}|_{\mcp(u)})$.  Let $\epsilon_t =  f^{-1}(t)$. 
Then Step 2 in the case $v = u \cap u_{\epsilon_t}$ gives a contradiction.

\br
\step{Step 6: A good triple.} To finish, without loss of generality we may take $\de^\prime$ maximal subject to the condition that
$(I, \de^\prime, (\gee|\ba) \cup \eff^\prime)$ is a pre-good triple. 
\end{proof}

\section{\hspace{2mm}Existence} \label{s:existence}

In this section we prove that for any complete $\lambda^+$-c.c. Boolean algebra $\ba$ of cardinality $\leq 2^\lambda$ there is a regular
$\lambda^+$-excellent filter $\de$ on $\lambda$ such that $\ba$ is isomorphic to $\mcp(\lambda)/\de$.

\begin{theorem} \emph{(Existence)} \label{t:existence}
Let $\mu \leq \lambda$ and let $\ba$ be a $\mu^+$-c.c. complete Boolean algebra of cardinality $\leq 2^\lambda$. 
Then there exists a regular excellent filter 
$\de$ on $\lambda$ and a surjective homomorphism  $\jj : \mcp(I) \rightarrow \ba = \ba_{2^\lambda, \mu}$ such that $\de = \jj^{-1}(\{ 1_\ba \})$.  
\end{theorem}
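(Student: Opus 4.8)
The plan is to run a transfinite induction of length $2^\lambda$, using the machinery of Section~\ref{s:l-e}. The starting point is Corollary~\ref{f:basic}: with $\kappa = \lambda$ it supplies $\de_1$, a regular filter on $I$ with $|I| = \lambda$, together with disjoint independent families $\eff \subseteq {}^I\lambda$ of size $2^\lambda$ and $\gee \subseteq {}^I 2$ of size $|\ba|$, such that $(I, \de_1, (\gee|\ba) \cup \eff)$ is a $(\lambda, \mu)$-good triple. Here $\gee$ is constrained by $\ba$ in the sense of Definition~\ref{d:m1}, with fixed enumerations $\langle \mb_\gamma : \gamma < \gamma_* \rangle$ of $\ba \setminus \{0_\ba\}$ and $\langle g_\gamma : \gamma < \gamma_* \rangle$ of $\gee$, and $B_\gamma = g_\gamma^{-1}\{1\}$. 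The idea of these constraints is that throughout the construction the sets $\{B_\gamma : \gamma < \gamma_*\}$ will behave, modulo the filter, exactly like the elements $\{\mb_\gamma\}$ behave in $\ba$, so that at the end they induce the desired homomorphism.

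\textbf{The induction.} I would build an increasing chain of filters $\langle \de_\alpha : \alpha < 2^\lambda \rangle$ together with a decreasing chain $\langle \eff_\alpha : \alpha < 2^\lambda \rangle$ of subfamilies of $\eff$, maintaining the invariant that $(I, \de_\alpha, (\gee|\ba) \cup \eff_\alpha)$ is a $(\lambda, \mu)$-good triple and that $|\eff \setminus \eff_\alpha| \leq |\alpha| + \lambda < 2^\lambda$ (so that $\eff_\alpha$ is always nonempty, indeed of full size $2^\lambda$, since we consume at most $\lambda$ functions at each of $< 2^\lambda$ steps, and $\operatorname{cf}(2^\lambda) > \lambda$ is not needed — we just need the running total to stay $< 2^\lambda$, which holds as $2^\lambda$ is a cardinal and $|\alpha| \cdot \lambda < 2^\lambda$). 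Fix a bookkeeping function enumerating, with cofinal repetition, all pairs consisting of a subset $X \subseteq I$ and all $\lao$-indexed sequences $\overline{A}$ from $\mcp(I)$ (there are $2^\lambda$ of each). At stage $\alpha$:

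\emph{Odd successor stages} handle the homomorphism: given the next $X \subseteq I$ on the list, apply Lemma~\ref{l:free} to the good triple $(I, \de_\alpha, (\gee|\ba) \cup \eff_\alpha)$ to obtain $\de_{\alpha+1} \supseteq \de_\alpha$ and $\eff_{\alpha+1} \subseteq \eff_\alpha$ with $|\eff_\alpha \setminus \eff_{\alpha+1}| \leq \lambda$, such that $X$ is supported by $\fin_s(\gee|\ba)$ modulo $\de_{\alpha+1}$ and the triple remains $(\lambda, \mu)$-good. \emph{Even successor stages} handle excellence: given the next sequence $\overline{A}$ on the list, since the triple is good, every $A_u$ meets every $A_h \in \fin_s(\eff_\alpha)$ nontrivially mod $\de_\alpha$ (by Observation~\ref{dense-new}, density of $\fin_s((\gee|\ba)\cup\eff_\alpha)$, a positive set cannot be disjoint from a dense set — more carefully, we should only place on the bookkeeping list those $\overline{A}$ whose entries are positive, or first replace $A_u$ by $\emptyset$ when $A_u = \emptyset \bmod \de_\alpha$, which is harmless for excellence). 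Then apply Claim~\ref{e-ind-step} with a single consumed function $f \in \eff_\alpha$ to obtain $\de_{\alpha+1} \supseteq \de_\alpha$, $\eff_{\alpha+1} = \eff_\alpha \setminus \{f\}$, and a sequence $\overline{B}$ which is a $\de_\alpha$-excellent refinement of $\overline{A}$, with the triple still good. \emph{Limit stages} take unions of the $\de_\alpha$ and intersections of the $\eff_\alpha$; goodness of the limit triple is routine since the defining conditions are of finite character, and the size bound on $\eff \setminus \eff_\alpha$ is preserved because $|\alpha| \cdot \lambda < 2^\lambda$.

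\textbf{Closing the argument.} Let $\de = \bigcup_{\alpha < 2^\lambda} \de_\alpha$. First, $\de$ is regular: it extends the regular filter $\de_1$. Second, $\de$ is $\lambda^+$-excellent: given any $\lao$-sequence $\overline{A}$ from $\mcp(I)$, it was treated at some even stage $\alpha$ (after discarding entries that are empty mod $\de$, which by cofinality of the bookkeeping happens at a stage where $\de_\alpha$ already equals $\de$ on the relevant Boolean combinations — here one must check that the relevant equalities $\sigma(\overline{A}|_{\mcp(u)}) = \emptyset \bmod \de$ are already witnessed at stage $\alpha$; this is the one bookkeeping subtlety, resolved by noting $\Lambda_{\de, \overline{A}|_u}$ depends only on finitely many entries and on which Boolean combinations vanish mod $\de$, a $\Sigma$-type condition that stabilizes at some stage $< 2^\lambda$, and arranging the enumeration so $\overline{A}$ is revisited after that stage), yielding $\overline{B}$ satisfying (1)–(3) of Definition~\ref{d:excellent} relative to $\de_\alpha$; since $\de_\alpha \subseteq \de$, conditions (1) and (3) pass up immediately, and (2) holds because no later stage removes $X_u = I \setminus (B_u \Delta A_u)$ — in fact $X_u \in \de_{\alpha+1} \subseteq \de$. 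Third, the homomorphism: for each $X \subseteq I$, the odd-stage treatment guarantees $X \equiv A_h \bmod \de$ for some $A_h \in \fin_s(\gee|\ba)$, i.e. $X$ is, mod $\de$, a Boolean combination of the $B_\gamma$'s; mapping $B_\gamma \mapsto \mb_\gamma$ and extending by the Boolean operations gives a well-defined map $\jj : \mcp(I) \to \ba$ — well-definedness and that $\jj^{-1}(1_\ba) = \de$ are exactly the content of the constraint-by-$\ba$ condition (Definition~\ref{d:m1}, cf. Observation~\ref{o:works}), surjectivity follows since every $\mb_\gamma$ is hit, and it is a homomorphism by construction. This is essentially Observation~\ref{o:cong} applied to the good triple $(I, \de, \gee|\ba)$; the only new content is that we have simultaneously forced $\de$ to be excellent.

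\textbf{The main obstacle} I expect is the interleaving of the two bookkeeping tasks with the stabilization of the sets $\Lambda_{\de, \overline{A}|_u}$ and of the equivalences $A_u = \emptyset \bmod \de$: excellence is a statement about the \emph{final} filter $\de$, but Claim~\ref{e-ind-step} produces a refinement excellent only for the \emph{current} $\de_\alpha$. One must verify that no later extension destroys condition (3) — which is fine, since (3) is an \emph{equality} $\sigma(\overline{B}|_{\mcp(u)}) = \emptyset$ on the nose, not mod anything, and extending the filter cannot un-empty a set — and that every instance of (3) demanded by $\de$ was already demanded by some $\de_\alpha$. The clean way to arrange this is to enumerate each sequence $\overline{A}$ cofinally often in $2^\lambda$ and argue that $\Lambda_{\de,\overline{A}|_u}$, being determined by which of countably-or-$\lambda$-many Boolean combinations vanish mod the increasing union, equals $\Lambda_{\de_\alpha, \overline{A}|_u}$ for all sufficiently large $\alpha$; pick an appearance of $\overline{A}$ past that threshold. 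Everything else — regularity, the size accounting on $\eff$, limit stages — is routine given the lemmas of Section~\ref{s:l-e}.
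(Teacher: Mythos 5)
Your proposal is correct and follows essentially the same route as the paper: start from a $(\lambda,\mu)$-good triple via Corollary~\ref{f:basic}, run a length-$2^\lambda$ induction consuming $\eff$ while preserving $(\gee|\ba)$-independence, with Lemma~\ref{l:free} at odd stages to force each $X\subseteq I$ to be supported in $\fin_s(\gee|\ba)$ and Claim~\ref{e-ind-step} at even stages to produce excellent refinements, then take the union and invoke Observation~\ref{o:cong}. You are slightly more explicit than the paper about why excellence persists — that condition (3) of Definition~\ref{d:excellent} is an on-the-nose equality (so cannot be destroyed by enlarging the filter), that condition (2) is guaranteed by $X_u\in\de_{\alpha+1}\subseteq\de$, and that $\Lambda_{\de,\overline{A}|_u}$ stabilizes below $2^\lambda$ by $\operatorname{cf}(2^\lambda)>\lambda$ so revisiting $\overline{A}$ cofinally often suffices — which is a cleaner account of the bookkeeping the paper compresses into its ``Stage X'' remark.
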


\begin{proof} We give the proof in several stages. Recall that we have identified the index set with $\lambda$.

\step{Stage 0: Preliminaries.} 
We begin by choosing $\gee$, $\eff$, $\de_1$ such that $|\gee| = |\ba|$, $|\eff| = 2^\lambda$, $\gee$ is a family of functions
from $I$ onto $2$, $\eff$ is a family of functions from $\lambda$ onto $\lambda$, and 
$(I, \de_1, (\gee|\ba) \cup \eff)$ is a $(\lambda, \lambda, \mu)$-good Boolean triple, in the notation of
Definition \ref{d:ba-filter1}.  Such triples exist by Corollary \ref{f:basic}.

\step{Stage 1: Setting up the inductive construction.}
We now set up the construction of $\de$, which we build by induction on $\alpha < 2^\lambda$. Recall that we will want to ensure that on the one hand,
the final filter $\de$ is excellent, and on the other that the quotient $\mcp(I)/\de$ is exactly $\ba$. 

We address the issue of the quotient by enumerating $\mcp(\lambda)$ as $\langle C_\alpha : \alpha < 2^\lambda \rangle$ and 
ensuring, at odd inductive steps, that the set $C_\beta$ under consideration has an appropriate image. This suffices by
Observation \ref{o:cong}.

In order to address all possible barriers to excellence, at even inductive steps, 
we will need an enumeration of all sequences $\overline{B}$ as in Definition \ref{d:excellent}.
Say that $\mx : \fss(\lambda) \rightarrow \fss(\gamma_*)$ is an
\emph{indexing sequence} whenever 
\[ u \in \lao \implies \bigcup \{ \mx(i) : i \in u \} = \mx(u) \]
Let $\langle \mx_\alpha : \alpha < 2^\lambda \rangle$ list all indexing sequences, each appearing $2^\lambda$ times.
Below, given $u \in \lao$, we will write e.g. ``$\overline{\ma}_{\mx(\mcp(u))}$'' to indicate the finite sequence of elements of $\ba$ 
indexed by the image of the finite subsets of $u$ under $\mx$.

Now we choose $\de_{2,\alpha}$, $\eff^\alpha$ by induction on $\alpha \leq 2^\lambda$ such that:

\begin{enumerate}
\item $\de_{2,\alpha}$ is a filter on $\lambda$
\item $\beta < \alpha < 2^\lambda \implies \de_{2,\beta} \subseteq \de_{2, \alpha}$, and $\alpha$ limit 
implies $\de_{2,\alpha} = \bigcup_{\beta < \alpha} \de_{2,\beta}$
\item $\eff^\alpha \subseteq \eff$, $|\eff^{\alpha}| = 2^\lambda$, and $\beta < \alpha \implies \eff^\beta \supseteq \eff^\alpha$
\item If $h \in \fin(\eff^\alpha)$ and $\gamma < \gamma_*$ then $A_h \cap B_\gamma \neq \emptyset \mod \de_{2,\alpha}$
\item $(I, \de_\alpha, (\gee|\ba) \cup \eff^\alpha)$ is $(\lambda, \lambda, \mu)$-good
\item If $\alpha = 2\beta + 1$, then for some $\gamma < \gamma_*$, $C_\beta = B_\gamma$ $\mod \de_{2,\alpha}$.
\item If $\alpha = 2\beta + 2$, \emph{if} for each $u \in \lao$ and Boolean term $\sigma= \sigma(\xpu)$ we have that
\[ \ba \models \mbox{``}\sigma(\overline{\mb}_{\mx(\mcp(u))}) = 0\mbox{''} \implies \sigma(\overline{B}_{\mx(\mcp(u))}) = \emptyset \mod \de_{2,2\beta+1} \]
\emph{then} we can find $\overline{B}_\alpha = \langle B^\alpha_u : u \in \lao \rangle$ satisfying Definition \ref{d:excellent}.
\end{enumerate}

For $\alpha=0$ this is trivial: let $\de_{2,\alpha} = \de_1$, $\eff^\alpha = \eff$.

For $\alpha$ limit let $\de_{2,\alpha} = \bigcup \{ \de_{2,\beta} : \beta < \alpha \}$, $\eff^\alpha = \bigcap \{ \eff^\beta : \beta < \alpha \}$.

For $\alpha$ successor, we distinguish between even and odd.
\step{Stage 2: Odd successor steps.} For $\alpha = 2\beta + 1$ we address (5) for the given $C_\beta$. If $C_\beta = \emptyset \mod \de_{2,2\beta}$,
let $\de_{2,\alpha} = \de_{2,2\beta}$ and finish. Otherwise, apply Lemma \ref{l:free} above in the case where $\de = \de_{2,2\beta}$, 
$\eff = \eff^{2\beta}$, $X = C_\beta$. Then let $\de_{2,2\beta+1}$ be the filter $\de^\prime$ and let
$\eff^{2\beta + 1}$ be the family $\eff^\prime$ returned by that Lemma. 
Without loss of generality, let $\de_{2,2\beta+1}$ be maximal subject to the condition that $(\gee|\ba) \cup \eff$ remain independent. 
Note that conditions (3),(4),(5) are guaranteed by the statement of Lemma \ref{l:free}. 

\step{Stage 3: Even successor steps.} For $\alpha = 2\beta + 2$ we address condition (6). Suppose then that we are given an indexing
function $\mx=\mx_\alpha$ and a corresponding sequence $\langle B_{\mx(u)} : u \in \lao \rangle$ 
of elements of $\mcp(I)$. If the ``if'' clause in condition (4) fails,
let $\de_{2,\alpha} = \de_{2,2\beta+1}$, and see the bookkeeping remark in the next step. 
Otherwise, fix $f_* \in \eff^{2\beta +1}$ and apply Claim \ref{e-ind-step} above in the case $\de = \de_{2,2\beta+1}$, $\eff = \eff^{2\beta+1}$,
$\eff^{2\beta + 1} \setminus \{ f_* \}$ and $\langle A_u : u \in \lao \rangle = \langle B_{\mx(u)} : u \in \lao \rangle$.
To complete Stage 3, let $\de_\alpha$ be the filter $\de^\prime$ returned by Claim \ref{e-ind-step}, 
and let $\eff^\alpha = \eff^{2\beta + 1} \setminus \{ f_* \}$. As in Stage 2, the inductive conditions are guaranteed by the statement
of that Claim.

\step{Stage X: A remark on bookkeeping.}
Note that once all the elements of the sequence $\langle B_{\mx(u)} : u \in \lao \rangle$ have appeared as elements $C_\beta$ 
in the enumeration at odd successor steps, Condition (4) will be satisfied by definition of $(I, \de, (\gee|\ba) \cup \eff)$-good triple. 
Likewise, since each indexing function (and therefore each potential sequence $\overline{B}$) occurs cofinally often in our master enumeration, 
and the cofinality of the construction is greater than $\lambda$, 
we are justified in Claim \ref{e-ind-step} in only adjusting for those instances of Boolean terms which the filter
already considers to be small.

\step{Stage 4: Finishing the proof.}
Since there is no trouble in carrying out the induction, we finish by letting $\de = \de_{2,2^\lambda} = \bigcup \{ \de_{2,\alpha} : \alpha < 2^\lambda \}$.
This completes the proof. 
\end{proof}

\section{\hspace{2mm}On flexibility} \label{s:flexible}

In this section we give the necessary background for the non-saturation claim in our main theorem.
That is, we leverage our prior work to show that once we have built a filter $\de$ on $\lambda$ so that
$\mcp(I)/\de$ has the $\mu^+$-c.c. for $\mu < \lambda$, no ultrafilter extending $\de$
will saturate any non-simple or non-low theory. (That is, provided it is built by the method of independent families
of functions -- if an appeal to complete ultrafilters is made, the situation changes, see e.g. Malliaris and
Shelah \cite{mm-sh-v2} Remark 4.2.)

The main definition in this section is \emph{flexible filter}, due to Malliaris
\cite{mm-thesis}. Roughly speaking, the definition assigns a natural size to any given regularizing
family and asks that a flexible filter have regularizing families of arbitrarily small 
nonstandard size.
 
\begin{defn} \label{defn-flexible} \emph{(Flexible filters, \cite{mm-thesis})}
Let $\de$ be a regular filter on $I$, $|I| = \lambda \geq \aleph_0$, and let
$X = \langle X_i : i<\mu \rangle$ be a $\mu$-regularizing family for $\de$.
Say that an element $n_* \in {^I\mathbb{N}}$ is $\de$-nonstandard 
if for each $n\in \mathbb{N}$,
$\{ t \in I : (\mathbb{N}, <) \models n_*[t] > n \} \in \de$.

Define the size of $X$,
$\sigma_X$ to be the element of ${^I \mathbb{N}}$ defined by:
\[  \sigma_X[t] = | \{ i<\mu : t \in X_i \}| ~~ \mbox{for each $t \in I$} \]
\noindent Say that $\de$ is \emph{$\mu$-flexible} if for every $\de$-nonstandard element $n_*$
there is a $\mu$-regularizing family $X \subseteq \de$ such that $\sigma_X \leq n_* \mod \de$. Otherwise,
say that $\de$ is \emph{$\mu$-inflexible} (or simply: \emph{not $\mu$-flexible}). 
When $\mu=\lambda$, we will often omit it. 
\end{defn}

For more on flexibility, see Malliaris \cite{mm4} and recent work of Malliaris and Shelah \cite{mm-sh-v1}-\cite{mm-sh-v2},
where it is shown that flexible is consistently weaker than good. 

Malliaris \cite{mm-thesis} had shown that flexibility is detected by non-low theories, that is:

\begin{fact} \emph{(Malliaris \cite{mm-thesis} Lemma 1.21)}  
Let $T$ be non-low, let $M \models T$ and suppose that $\de$ is a $\lambda$-regular ultrafilter on $I$, $|I|=\lambda$
which is not flexible. Then $M^I/\de$ is not $\lambda^+$-saturated. 
\end{fact}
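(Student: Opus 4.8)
The plan is to use non-lowness of $T$ to build, inside $N = M^I/\de$, a $\vp$-type over a parameter set of size $\lambda$ that is finitely satisfiable but omitted, the obstruction to its realization being exactly the failure of flexibility. By Theorem \ref{backandforth} it suffices to treat one $M \models T$ (one may even assume $M$ is $\aleph_1$-saturated, but this is not needed: the configurations used below are witnessed by first-order sentences belonging to $T$).

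\textbf{Model-theoretic input.} Fix $\vp = \vp(x;y)$ witnessing non-lowness. Unwinding the definition via the ranks $D(x=x,\{\vp\},k)$: for \emph{every} $k < \omega$, $\vp$ fails to be low at $k$, which yields an infinite indiscernible sequence $\langle b^k_\ell : \ell < \omega \rangle$ such that $\{ \vp(x;b^k_\ell) : \ell < \omega \}$ is $k$-consistent (every $k$-element subfamily has a common realization) but inconsistent; by compactness and indiscernibility it is then $N(k)$-inconsistent for some finite $N(k)$, and necessarily $N(k) > k$. Thus $k \mapsto N(k)$ is finite-valued with $N(k) \to \infty$, and for each $k$ and each finite $\ell^*$ the sentence asserting the existence of such an $\ell^*$-tuple lies in $T$, hence has witnesses in $M$. \textbf{The flexibility failure.} Since $\de$ is not flexible, fix a $\de$-nonstandard $n_* \in {}^I\mathbb{N}$ such that no regularizing family $X \subseteq \de$ has $\sigma_X \leq n_* \bmod \de$. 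Choose a $\lambda$-regularizing family $\langle Y_i : i < \lambda \rangle$ for $\de$ whose size function $\sigma_Y$ is itself $\de$-nonstandard: $\de$ is countably incomplete (being regular on an infinite set), so fixing a decreasing $\langle D_n : n < \omega \rangle \subseteq \de$ with empty intersection and any regularizing $\langle Z_i : i < \lambda \rangle$ covering $I$, the family $Y_{(i,n)} := Z_i \cap D_n$ (identifying $\lambda$ with $\lambda \times \omega$) is regularizing with $\sigma_Y(t) = \sigma_Z(t)\cdot(n^*(t)+1)$ nonstandard, where $n^*(t) = \max\{n : t \in D_n\}$. For $t \in I$ set $u(t) = \{ i < \lambda : t \in Y_i \}$, finite, and let $m(t)$ be the largest $m$ with $N(m) \leq \min(n_*[t], |u(t)|)$; since $\min(n_*, \sigma_Y)$ is $\de$-nonstandard and $N$ is a fixed finite-valued function, $t \mapsto m(t)$ is $\de$-nonstandard.

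\textbf{The type, its consistency, and its omission.} For each $t$ enumerate $u(t) = \{ i^t_0 < \cdots < i^t_{k-1} \}$ with $k = |u(t)|$, put $a_{i^t_\ell}[t] = b^{m(t)}_\ell$ for $\ell < k$ and $a_i[t] \in M$ arbitrary for $i \notin u(t)$, and set $a_i = \langle a_i[t] : t \in I \rangle / \de$, $p = \{ \vp(x; a_i) : i < \lambda \}$. For finite $w \subseteq \lambda$ with $|w| = s$, the set $\bigcap_{i \in w} Y_i \cap \{ t : m(t) \geq s \}$ lies in $\de$, and at each such $t$ the family $\{ \vp(x; a_i[t]) : i \in w \}$ is an $s$-element subfamily of $\{ \vp(x; b^{m(t)}_\ell) : \ell < |u(t)| \}$, so consistent since $s \leq m(t)$; by \los\ theorem $p$ is finitely satisfiable in $N$. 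Now suppose $c = \langle c[t] : t \in I \rangle / \de$ realized $p$. Put $X_i = Y_i \cap \{ t : M \models \vp(c[t]; a_i[t]) \} \in \de$. For each $t$, $\{ i : t \in X_i \} \subseteq \{ i \in u(t) : M \models \vp(c[t]; a_i[t]) \}$, and the formulas $\{ \vp(x; a_i[t]) : i \in u(t) \}$ form the family $\{ \vp(x; b^{m(t)}_\ell) : \ell < |u(t)| \}$, which is $N(m(t))$-inconsistent because $|u(t)| \geq N(m(t))$ by choice of $m(t)$; hence the single element $c[t]$ satisfies at most $N(m(t)) - 1$ of them. Therefore $\sigma_X[t] < N(m(t)) \leq n_*[t]$ for all $t$, so $\langle X_i : i < \lambda \rangle$ is a regularizing family with $\sigma_X \leq n_* \bmod \de$, contradicting the choice of $n_*$. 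Thus $p$ is omitted and $M^I/\de$ is not $\lambda^+$-saturated.

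\textbf{Expected main obstacle.} The ultrafilter combinatorics (the \los-theorem argument for consistency and the counting argument for omission) is routine. The real work is the model-theoretic input together with the size bookkeeping: one must extract from ``$T$ non-low'' dividing patterns that are, uniformly in $k$, simultaneously $k$-consistent (so arbitrarily large finite fragments of $p$ survive) yet only $N(k)$-inconsistent with $N(k) \to \infty$ (so a realization of $p$ would compress a regularizing family below $n_*$), and one must produce an auxiliary regularizing family of nonstandard size so that $m(t)$ can be taken below both $n_*[t]$ and $|u(t)|$ while remaining $\de$-nonstandard. Making these growth rates line up is the crux; this is in essence the content of Malliaris's Lemma 1.21.
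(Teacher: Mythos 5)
Your proof is correct, and it follows what is essentially the standard argument behind this result (the paper cites it as a Fact from Malliaris's thesis without reproducing the proof). All the pieces check out: from non-lowness of $\vp$ you correctly extract, for each $k$, an indiscernible sequence whose $\vp$-instances are $k$-consistent but $N(k)$-inconsistent with $k < N(k) < \omega$, and you observe (correctly) that the relevant finite configurations lie in $T$ so may be witnessed inside an arbitrary $M \models T$ — no auxiliary saturation of $M$ is needed. The auxiliary family $Y$ with $\de$-nonstandard size $\sigma_Y$, obtained by intersecting a regularizing family with a decreasing $\omega$-chain witnessing countable incompleteness, is exactly the device needed so that the ``threshold'' $m(t) = \max\{m : N(m) \leq \min(n_*[t], |u(t)|)\}$ is $\de$-nonstandard; this in turn is what makes the type $p$ finitely satisfiable (via \Los\ on the sets $\bigcap_{i\in w} Y_i \cap \{t : m(t)\geq |w|\}$) while guaranteeing that any realization $c$ would compress the family $X_i = Y_i \cap \{t : M \models \vp(c[t];a_i[t])\}$ to size $\sigma_X < N(m(\cdot)) \leq n_*$ a.e., contradicting the choice of $n_*$. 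The counting step uses, implicitly but correctly, that the $b^{m(t)}_\ell$ are pairwise distinct (forced by $1$-consistency plus inconsistency of an indiscernible sequence), so the $N(m(t))$-many satisfied instances would really form an $N(m(t))$-element subfamily.

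Two small remarks, neither a gap. First, you define $m(t)$ as ``the largest $m$ with $N(m) \leq \min(n_*[t],|u(t)|)$'' without assuming $N$ monotone; the argument survives because you only need the one-sided implication $N(m_0) \leq \min(\dots) \Rightarrow m(t)\geq m_0$ (for nonstandardness) and the defining inequality $N(m(t)) \leq \min(\dots)$ (for the size bound), both of which hold regardless of monotonicity, and $N(m) > m$ bounds the set of candidates. If you prefer, replacing $N$ by $\hat N(k) = \max_{j\leq k} N(j)$ makes this manifestly clean. Second, your phrase ``covering $I$'' for the auxiliary $Z$-family is a harmless simplification: since $Z_0 \in \de$, one already has $\sigma_Z \geq 1$ on a $\de$-large set, which suffices.
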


By a dichotomy theorem of Shelah, any non-simple theory will have either the tree property of the first kind
($TP_1$, or equivalently $SOP_2$) or else of the second kind ($TP_2$). A consequence of Malliaris' proof of the existence of a 
Keisler-minimum $TP_2$-theory in \cite{mm4} is that:

\begin{fact} \emph{(Malliaris \cite{mm4} Lemma 8.8)} 
Let $\de$ be a regular ultrafilter on $\lambda$. If $\de$ saturates some theory with $TP_2$ then $\de$ must be flexible.
\end{fact}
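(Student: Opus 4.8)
\emph{Proof proposal.}
I will argue by contrapositive: assuming $\de$ is a regular ultrafilter on $I$ with $|I|=\lambda$ which is \emph{not} $\lambda$-flexible, I will show that $\de$ fails to saturate every theory with $TP_2$. Fix such a theory $T$. By Theorem \ref{backandforth} the property ``$M^I/\de$ is $\lambda^+$-saturated'' depends only on $Th(M)$, so we may choose a convenient $M\models T$ (say $\lambda^+$-saturated). Then $TP_2$ is witnessed by a formula $\varphi(x;\bar y)$ and an array $\langle \bar a^i_j : i<\lambda,\ j<\omega\rangle$ in $M$ such that, after replacing $\varphi$ if necessary, each ``row'' $\{\varphi(x;\bar a^i_j):j<\omega\}$ is $2$-inconsistent, while for every $f:\lambda\to\omega$ the ``path'' $\{\varphi(x;\bar a^i_{f(i)}):i<\lambda\}$ is consistent. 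It suffices to work with this single $\varphi$; alternatively one may first invoke the existence of a Keisler-minimum $TP_2$ class (Malliaris \cite{mm4}) to reduce to $T_{feq}$, where $\varphi(x;y)=E(x,y)$ and the rows are the classes of the parametrizing equivalence relation.

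Since $\de$ is not $\lambda$-flexible, fix a $\de$-nonstandard $n_*\in{}^I\mathbb{N}$ for which no $\lambda$-regularizing family $\langle Y_i:i<\lambda\rangle\subseteq\de$ has $\sigma_Y\le n_*\bmod\de$; and fix a $\lambda$-regularizing family $\langle X_i:i<\lambda\rangle$ for $\de$, which exists by regularity (Definition \ref{regular}). Since $X$ itself cannot witness flexibility for $n_*$, we have $\{t\in I:\sigma_X[t]>n_*[t]\ge 1\}\in\de$; intersecting every $X_i$ with this set, we may assume $1\le n_*[t]<\sigma_X[t]=|S_t|$ for all $t$, where $S_t=\{i<\lambda:t\in X_i\}$ is finite. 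The plan is now to use $\varphi$, the array, the family $\langle X_i\rangle$, and $n_*$ to build parameters $\langle \mathbf{c}_i:i<\lambda\rangle$ in $N=M^I/\de$ so that the $\varphi$-type $p(x)=\{\varphi(x;\mathbf{c}_i):i<\lambda\}$ is consistent in $N$ but omitted; by Fact \ref{phi-types}, or simply because an omitted type already blocks saturation, this shows $M^I/\de$ is not $\lambda^+$-saturated.

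Consistency of $p$ is the easy half: one arranges, for each $t\in I$ and each finite $u\subseteq\lambda$, that the configuration $\{\varphi(x;\mathbf{c}_i[t]):i\in u\}$ is (a sub-configuration of) a path through the array, hence realized in $M$; this uses only path-consistency, and gives that every finite fragment of $p$ holds on all of $I$, so $p$ is a consistent type over a set of size $\lambda$. The real work is omission. Given a putative realization $d\models p$, the sets $A_i=\{t\in I: M\models\varphi(d[t],\mathbf{c}_i[t])\}$ belong to $\de$, so $\langle A_i\cap X_i:i<\lambda\rangle$ is again a $\lambda$-regularizing family contained in $\de$. The construction must encode $n_*$ into the parameters $\mathbf{c}_i$ so that, at $\de$-most coordinates $t$, no single element $d[t]$ can meet more than $n_*[t]$ of the sets $A_i$ with $i\in S_t$ — that is, $\sigma_{A\cap X}\le n_*\bmod\de$, contradicting the choice of $n_*$. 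The leverage for this coordinatewise bound is the $2$-inconsistency of rows, which limits how many distinct ``rows'' (equivalence classes, in the $T_{feq}$ picture) a given element can simultaneously be incident to; one funnels the $\sigma_X[t]$ active indices of $S_t$ through roughly $n_*[t]$ rows while keeping all finite configurations consistent. Balancing these two demands — enough fan-out along the path direction to keep every finite fragment of $p$ consistent, enough funnelling through $n_*$ to force any realization to induce a regularizing family of size $\le n_*$ — is the delicate point and the main obstacle of the proof; everything else is bookkeeping. Once $p$ is produced as a consistent but unrealized type, the contrapositive is complete, and hence the Fact.
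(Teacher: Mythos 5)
The paper states this result as a black-box Fact cited to Malliaris \cite{mm4} Lemma 8.8, supplying no in-paper proof, so there is nothing internal to compare against. Taken on its own terms, your proposal lays out the correct high-level strategy: pass to the contrapositive, reduce (via a Keisler-minimum for $TP_2$) essentially to $T^*_{feq}$, fix a non-flexibility witness $n_*$ together with a regularizing family $\langle X_i\rangle$, and build a $\varphi$-type $p$ over $\lambda$ parameters in $M^I/\de$ that is finitely satisfiable yet whose realization would force a regularizing family of size $\le n_*$. Your identification of the lever is also right: row $k$-inconsistency caps how many of the instances $\varphi(x;\mathbf{c}_i[t])$ a single element of $M$ can satisfy at a given $t$.

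The gap, however, is genuine and you flag it yourself: you never construct the parameters. You write that ``balancing these two demands \dots is the delicate point and the main obstacle of the proof; everything else is bookkeeping,'' but that balancing \emph{is} the lemma. Concretely, one must exhibit maps $g_t : S_t \to n_*[t]$ (funnelling the active indices into $n_*[t]$ rows at coordinate $t$) and choose $\mathbf{c}_i[t]$ along $g_t$ so that (i) any two indices in the same $g_t$-block at $t$ receive $E$-inequivalent classes there — this is what forces $\sigma_{A\cap X}[t]\le n_*[t]$ for any putative realization — while simultaneously (ii) for every finite $u\subseteq\lambda$, $g_t\upharpoonright u$ is injective for $\de$-almost all $t$, so that each finite fragment $p\upharpoonright u$ is consistent at $\de$-many coordinates. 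Condition (ii) is the negation of the collision needed in (i), restricted to $u$, and must hold for all $\binom{\lambda}{2}$ pairs at once; it is not a priori clear the $g_t$ can be chosen coherently to meet these $\lambda$-many separation constraints, and nothing in your sketch addresses how. Without that construction the type $p$ does not exist, and there is no proof — only a correct statement of what a proof would need to do. (A small notational point: after intersecting each $X_i$ with $Z = \{t : 1\le n_*[t] < \sigma_X[t]\}$ you only get the inequality on $Z$, since for $t\notin Z$ the new $\sigma$ is $0$; you should say ``for $\de$-almost all $t$,'' which is all that is used.)
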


A consequence of recent work of Malliaris and Shelah on ultrapowers realizing $SOP_2$-types is a complementary result.

\begin{fact} \emph{(rewording of Malliaris and Shelah \cite{treetops} Claim 3.11 for the level of theories)}
Let $\de$ be a regular ultrafilter on $\lambda$. If $\de$ saturates some theory with $SOP_2$ then $\de$ must be flexible. 
\end{fact}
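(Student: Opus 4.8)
\noindent The plan is to reduce this to the ultrafilter-level statement \cite{treetops} Claim 3.11, so the argument splits into a short model-theoretic reduction and a recollection of the combinatorial core of that claim; throughout I would work with the contrapositive. Suppose $\de$ is a regular ultrafilter on $I$ with $|I| = \lambda$ which is \emph{not} $\lambda$-flexible, let $T$ be any theory with $SOP_2$, $M \models T$, and $N = M^I/\de$; the goal is to produce a consistent partial $\vp$-type over a subset of $N$ of size $\leq \lambda$ that $N$ omits, so that $N$ fails to be $\lambda^+$-saturated. For the reduction: fix a formula $\vp = \vp(x;y)$ of $T$ witnessing $SOP_2$. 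In a sufficiently saturated $M^* \succeq M$ there is then a tree of parameters $\langle a_\eta : \eta \in {}^{<\omega}2 \rangle$ with every branch $\{\vp(x; a_{\eta \restriction k}) : k < \omega\}$ consistent and $\{\vp(x;a_\eta), \vp(x;a_\nu)\}$ inconsistent for $\trianglelefteq$-incomparable $\eta, \nu$. Any configuration of this ``$SOP_2$-pattern'' over $\leq \lambda$ parameters is a consistent $\vp$-type, so if $\de$ saturated $T$ then $N$ would realize every such configuration; hence it suffices to exhibit one that $\de$ cannot realize, which is exactly what \cite{treetops} Claim 3.11 provides from the failure of flexibility. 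The next two paragraphs recall its shape.

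\smallskip
\noindent Since $\de$ is not $\lambda$-flexible, first fix a $\de$-nonstandard $n_* \in {}^I\mathbb{N}$ such that no $\lambda$-regularizing family $X = \langle X_i : i < \lambda \rangle \subseteq \de$ satisfies $\sigma_X \leq n_* \bmod \de$. In each index model $M_t := M$, using $SOP_2$, I would choose a \emph{finite} tree of $\vp$-parameters $\langle a^t_\eta : \eta \in \mathcal{T}_t \rangle$, where $\mathcal{T}_t$ is the set of sequences of length $< n_*[t]$ from a set of size $\lambda$, with branches $\vp$-consistent and $\trianglelefteq$-incomparable nodes $\vp$-inconsistent; passing to the ultrapower gives a tree $\langle a_\eta \rangle$ internal to $N$ of nonstandard height $n_*$. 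One then wants a $\trianglelefteq$-chain $\langle \eta_i : i < \lambda \rangle$ through this internal tree --- an external chain of length $\lambda$ --- and sets $c_i = a_{\eta_i}$, $p(x) = \{\vp(x;c_i) : i < \lambda\}$. Since any finitely many of the $\eta_i$ form a $\trianglelefteq$-chain, hence (being a finite chain in a tree) lie on a single branch $\de$-almost everywhere, every finite fragment of $p$ is realized $\de$-a.e.\ in the index models, so $p$ is a consistent $\vp$-type over $\leq \lambda$ parameters of $N$.

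\smallskip
\noindent To see $p$ is omitted, one would suppose $b = \langle b[t] : t \in I \rangle / \de$ realizes $p$ and put $Z_i = \{t \in I : M_t \models \vp(b[t]; a_{\eta_i}[t])\}$, so each $Z_i \in \de$. Fixing also an arbitrary $\lambda$-regularizing family $\langle Y_i : i < \lambda \rangle$ for $\de$ and letting $Z_i' = Z_i \cap Y_i \in \de$, the family $\langle Z_i' \rangle$ is $\lambda$-regularizing, since infinite intersections are already empty inside the $Y_i$. Now for each fixed $t$ the nodes $\{\eta_i[t] : i < \lambda,\ t \in Z_i\}$ must be pairwise $\trianglelefteq$-comparable --- two incomparable ones would be $\vp$-inconsistent, yet $b[t]$ satisfies both --- so they lie on one branch of $\mathcal{T}_t$, which has at most $n_*[t]$ nodes; thus $|\{i : t \in Z_i'\}| \leq n_*[t]$ for every $t$, i.e.\ $\sigma_{Z'} \leq n_*$. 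Then $\langle Z_i' \rangle$ is a $\lambda$-regularizing family $\subseteq \de$ with $\sigma_{Z'} \leq n_* \bmod \de$, contradicting the choice of $n_*$. Hence $p$ is omitted, $N$ is not $\lambda^+$-saturated, and $\de$ saturates no $SOP_2$ theory; the contrapositive is the Fact.

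\smallskip
\noindent The main obstacle is the second paragraph, and it is the real content of \cite{treetops} Claim 3.11: producing an external $\trianglelefteq$-chain of length $\lambda$ through the internal tree $\langle a_\eta \rangle$ --- equivalently, an increasing $\lambda$-sequence inside the nonstandard height interval $[0, n_*)$ of $N$ --- and checking that finite fragments of the resulting $p$ stay consistent. This is where cofinality-spectrum information about $\de$ enters (one may need to adjust $n_*$, or to work inside a definable linear order of the right cofinality rather than the bare interval $[0, n_*)$), and it is the step imported from \cite{treetops}. What genuinely requires full $SOP_2$, as opposed to $TP_2$ or merely non-lowness, is the passage in the third paragraph from $\trianglelefteq$-incomparability to $\vp$-inconsistency: that is precisely what turns a hypothetical realization of $p$ into a regularizing family of the forbidden small size.
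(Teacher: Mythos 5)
The paper gives no proof of this Fact: it is stated purely as a citation to \cite{treetops} Claim~3.11, with no argument in the present paper. So there is no paper-internal proof to compare against, and your sketch has to be judged on its own terms. In outline it is reasonable, and you correctly identify the genuinely hard step --- producing an external $\trianglelefteq$-chain of length $\lambda$ through the internal tree of nonstandard height --- as the combinatorial content of the cited claim, and you rightly leave that as an import rather than pretending it is easy.

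There is, however, a genuine gap in the part you treat as routine, the omission argument in your third paragraph. From a hypothetical realization $b$ you form $Z_i' = Z_i \cap Y_i$, observe that for each $t$ the nodes $\{\eta_i[t] : t \in Z_i'\}$ lie on a single branch of $\mathcal{T}_t$ of length at most $n_*[t]$, and conclude $|\{i : t \in Z_i'\}| \leq n_*[t]$. That last inference silently assumes $i \mapsto \eta_i[t]$ is injective on $\{i : t \in Z_i'\}$. It need not be: the chain $\langle \eta_i \rangle$ is strictly increasing only in $N$, so for a fixed $t$ distinct indices $i \neq j$ can perfectly well have $\eta_i[t] = \eta_j[t]$; each such coincidence happens on a $\de$-null set, but there are $\lambda^2$ pairs, and a union of $\lambda^2$ null sets need not be null. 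Thus $\sigma_{Z'} \leq n_* \bmod \de$ is not established as written. Repairing this requires building the chain with extra level structure so that lengths can be tracked componentwise (for instance, with $\lg(\eta_i)$ forming a carefully chosen increasing sequence below $n_*$, plus some further shrinking of the $Z_i$), which is part of what the cofinality-spectrum machinery in \cite{treetops} actually buys you. So the sketch is faithful in shape, and you flag the main import honestly, but the contradiction at the end needs one more ingredient before it lands.
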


Combining these three facts we obtain:

\begin{concl} \label{not-flexible}
Let $\de$ be a regular ultrafilter on $\lambda$ and suppose $\de$ is not flexible. Let $T$ be a theory which is 
either non-low or non-simple, or both. Then $M^\lambda/\de$ is not $\lambda^+$-saturated. 
\end{concl}

\begin{rmk} \emph{(see \cite{mm-sh-v1} Observation 10.9)} \label{r:flex}
$\de$ is $\lambda$-flexible if and only if whenever $f: \fss(\lambda)\rightarrow \de$ is such that
$(u,v \in \fss(\lambda)) \land (|u|=|v|) \implies f(v) = f(u)$ then $f$ has a multiplicative refinement. 
\end{rmk}

The remaining ingredient is a theorem of Shelah which was stated as a constraint on goodness. However,
the proof proceeds by defining countably many elements $\langle A_n : n < \omega \rangle$ of $\de_*$,
and showing that the function $g: \fss(\mu) \rightarrow \de_*$ given by $g(s) = A_{|s|}$ does not have a
multiplicative refinement. Since this function is uniform in the cardinality of s, the proof shows, albeit anachronistically, a failure of flexibility. 
 
\begin{fact} \emph{(Shelah \cite{Sh:c} Claim VI.3.23 p. 364)} \label{s:fact}
Let $\de$ be a maximal filter modulo which $\gee$ is independent, $\kappa = CC(B(\de))$, for infinitely many
$g \in \gee$ $|\rn(g)|>1$ and $\de_* \supseteq \de$ an ultrafilter built by the method of independent families
of functions. Then $\de_*$ is not $\kappa^+$-good.
[More precisely, $\de_*$ is not $\kappa$-flexible.]
\end{fact}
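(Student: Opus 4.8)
Fact \ref{s:fact} is a known result of Shelah, so the task is to present the proof in the form stated, extracting the flexibility content from the original goodness argument. Here is my plan.

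\medskip

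The plan is to reconstruct Shelah's Claim VI.3.23 argument and simply observe that the witnessing function is uniform in $|s|$. First I would fix an infinite subset $\{ g_n : n < \omega \} \subseteq \gee$ with $|\rn(g_n)| > 1$, and for each $n$ pick two values, say $\{0,1\} \subseteq \rn(g_n)$. Using the independence of $\gee$ modulo $\de$ and the fact that $\de_*$ is an ultrafilter built by independent functions (so that, by Fact \ref{uf}(4) and density, each $g_n$ effectively ``decides'' one coordinate), one defines sets $A_n \in \de_*$ of the form $A_n = \bigcap_{m \geq n, m < N(n)} g_m^{-1}(\{0\})$ or a similar tail-intersection design; the key point is that $\langle A_n : n < \omega \rangle$ is decreasing with $\bigcap_n A_n = \emptyset \bmod \de$, and more importantly that no single element of $B(\de)$ can sit below ``too many'' of the $g_m$-fibers simultaneously because $\kappa = CC(B(\de))$ bounds antichains — this is where the c.c. hypothesis enters. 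Then define $g : \fss(\mu) \to \de_*$ by $g(s) = A_{|s|}$.

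\medskip

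Next I would show $g$ has no multiplicative refinement. Suppose $\langle B_s : s \in \fss(\mu)\rangle$ were one, so $B_s \subseteq A_{|s|}$, $B_s \in \de_*$, and $B_s \cap B_t = B_{s \cup t}$. Fix a large finite $n$; consider the $\mu$ singletons $\{\alpha\}$ for $\alpha$ in some set of size $\mu$. The sets $B_{\{\alpha\}}$ all lie below $A_1$; by multiplicativity, for any $s$ of size $n$ contained in this index set, $B_s = \bigcap_{\alpha \in s} B_{\{\alpha\}} \subseteq A_n$, and $B_s \in \de_*$ forces $B_s \neq \emptyset \bmod \de$. Running this for all $n$ at once, one extracts from the $B_{\{\alpha\}}$'s a configuration — essentially $\mu$ many nonzero elements of $B(\de)$ whose $n$-fold intersections are all nonzero for every $n$ — which, combined with the structure of the $A_n$ (each $A_n$ ``uses up'' a genuinely new independent coordinate $g_n$), contradicts $\kappa = CC(B(\de))$: one produces an antichain of size $\geq \kappa$ in $B(\de)$, or equivalently shows some $g_n$ must have range of size $< \aleph_0$ bounded uniformly, contrary to hypothesis. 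The precise combinatorial bookkeeping is exactly that of \cite{Sh:c} VI.3.23 and I would cite it rather than reproduce it.

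\medskip

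Finally, the strengthening to ``not $\kappa$-flexible'': the function $g$ just constructed satisfies $g(s) = g(t)$ whenever $|s| = |t|$, since $g(s) = A_{|s|}$ depends only on the cardinality. By Remark \ref{r:flex}, $\de_*$ being $\kappa$-flexible would mean every such cardinality-uniform $g : \fss(\kappa) \to \de_*$ has a multiplicative refinement. Since $g$ does not, $\de_*$ is not $\kappa$-flexible, which a fortiori (being a stronger failure than non-$\kappa^+$-goodness in this uniform case) recovers the original statement.

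\medskip

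The main obstacle I expect is the middle step: carefully choosing the $A_n$ so that (i) they genuinely lie in any independent-function ultrafilter $\de_*$ extending $\de$ — not just in some special one — which requires using that $\de$ is \emph{maximal} for independence of $\gee$ so that $\fin_s(\gee)$ is dense and every $g_n$-fiber is a nonzero element of $B(\de)$, and (ii) the contradiction at the end really does force an antichain of size $\kappa$ rather than something weaker. This is where the hypothesis "for infinitely many $g$, $|\rn(g)| > 1$" is essential, and getting the quantifier order right (finitely many $B_{\{\alpha\}}$ at a time, then diagonalizing) is the delicate part. Everything else is routine given Fact \ref{uf}, Fact \ref{f:cc}, and Observation \ref{dense-new}.
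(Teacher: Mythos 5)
Your proposal matches the paper's own treatment: the paper cites Shelah VI.3.23 and, in the discussion immediately preceding Fact \ref{s:fact}, makes exactly your key observation — that the witnessing function $g(s) = A_{|s|}$ produced in Shelah's proof is uniform in $|s|$, so via Remark \ref{r:flex} the failure of a multiplicative refinement shows failure of flexibility rather than merely failure of goodness. The extra detail you sketch about the internal structure of Shelah's argument (the tail-intersections, the c.c.\ contradiction) is not reproduced in the paper either, and your decision to cite VI.3.23 for that bookkeeping rather than redo it is the same choice the authors make.
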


\begin{obs} \label{o:cc}
Let $\mu < \lambda$ and let $\de$ be a regular $\lambda^+$-excellent filter on $\lambda$ given by
Theorem \ref{t:existence} in the case where $\jj(\mcp(I)) = \ba = \ba_{2^\lambda, \mu}$. Then 
$B(\de)$ has the $\mu^+$-c.c. 
\end{obs}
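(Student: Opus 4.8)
The plan is to trace the Boolean algebra $\ba = \ba_{2^\lambda,\mu}$ through the construction in Theorem \ref{t:existence} and to verify that its chain condition is preserved under passing to the quotient $B(\de) = \mcp(\lambda)/\de$. The key point is that the homomorphism $\jj : \mcp(\lambda) \to \ba$ produced by Theorem \ref{t:existence} has $\jj^{-1}(\{1_\ba\}) = \de$, and a surjective homomorphism of Boolean algebras whose kernel (in the filter sense) is exactly $\de$ induces an \emph{isomorphism} $\mcp(\lambda)/\de \cong \ba$. So $B(\de)$ is isomorphic to $\ba = \ba_{2^\lambda,\mu}$, and by Definition \ref{d:ba} (or rather the observation recorded in clause (1) of Definition \ref{gb-triples}) the algebra $\ba_{2^\lambda,\mu}$ has the $\mu^+$-c.c., since it is the completion of a Boolean algebra generated by $\mu$-indexed antichains $\{x_{\alpha,\epsilon} : \epsilon < \mu\}$; any maximal antichain has size $\le \mu < \mu^+$.

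First I would spell out that $\jj$ surjective with $\jj^{-1}(\{1_\ba\}) = \de$ gives $B(\de) \cong \ba$. Indeed, $\jj(A) = \jj(A')$ iff $\jj(A \, \Delta \, A') = 0_\ba$ iff $A \, \Delta \, A' = \emptyset \bmod \de$ (using that $\jj$ is a Boolean homomorphism, so $\jj(X) = 0_\ba$ iff $\jj(1 - X) = 1_\ba$ iff $1 - X \in \de$ iff $X = \emptyset \bmod \de$), which is precisely the condition $A = A' \bmod \de$. Hence $\jj$ factors through an injective homomorphism $\overline{\jj} : \mcp(\lambda)/\de \to \ba$, which is surjective since $\jj$ is; so $\overline{\jj}$ is an isomorphism. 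This step is purely formal.

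Second I would invoke the $\mu^+$-c.c.\ of $\ba_{2^\lambda,\mu}$. This is essentially part of the definition: Definition \ref{gb-triples}(1) already asserts ``$\ba = \ba_{2^\lambda,\mu}$, so in particular $\ba$ has the $\mu^+$-c.c.'', and the relevant content is that in the free Boolean algebra generated by $\{x_{\alpha,\epsilon}\}$ subject only to $x_{\alpha,\epsilon} \cap x_{\alpha,\zeta} = 0$ for $\epsilon \ne \zeta$, and hence in its completion, every antichain has size $\le \mu$; this is a standard fact about such "measure-algebra-like" products, e.g.\ it follows because the generating antichains $\{x_{\alpha,\epsilon}:\epsilon<\mu\}$ form a $\mu$-branching splitting structure and a $\Delta$-system argument bounds antichain size by $\mu$. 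Since isomorphic Boolean algebras have the same $CC$, we conclude $CC(B(\de)) = CC(\ba_{2^\lambda,\mu}) = \mu^+$, i.e.\ $B(\de)$ has the $\mu^+$-c.c., as desired.

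The only mild subtlety — and the step most worth stating carefully rather than the "main obstacle", since there is no real obstacle here — is making sure that the hypotheses of Theorem \ref{t:existence} are exactly met in the case at hand: we need $\mu \le \lambda$ (given, since $\mu < \lambda$) and $\ba_{2^\lambda,\mu}$ to be a $\mu^+$-c.c.\ complete Boolean algebra of cardinality $\le 2^\lambda$. Completeness holds by Definition \ref{d:ba} (we take the completion), $\mu^+$-c.c.\ is as just discussed, and the cardinality bound holds because $\ba_{2^\lambda,\mu}$ is the completion of an algebra generated by $2^\lambda \cdot \mu = 2^\lambda$ elements, and a $\mu^+$-c.c.\ complete algebra is determined by the antichains of its dense generating subalgebra, giving $|\ba_{2^\lambda,\mu}| \le (2^\lambda)^{\mu} = 2^\lambda$ (using $\mu \le \lambda$). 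With those hypotheses checked, Theorem \ref{t:existence} applies verbatim and the observation follows.
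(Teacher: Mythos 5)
Your proposal is correct and follows essentially the same route as the paper. The paper's proof argues directly that $\jj$ sends a maximal antichain of $B(\de)$ to a pairwise-disjoint family of nonzero elements of $\ba$ (using $\jj(A)\ne 0_\ba$ for $A\ne\emptyset\bmod\de$ and that $\jj$ preserves disjointness $\bmod\,\de$), while you observe the slightly stronger fact that $\jj$ descends to an isomorphism $B(\de)\cong\ba$ — both rest on the same point, namely that $\jj$ is a surjective homomorphism with $\jj^{-1}(\{1_\ba\})=\de$; your added hypothesis-checking for Theorem \ref{t:existence} is sound but not needed, since the observation already assumes $\de$ is produced by that theorem.
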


\begin{proof}
Clearly $\ba_{2^\lambda, \mu}$ has the $\mu^+$-c.c.
By definition of $\jj$, whenever $\langle A_i : i < \kappa \rangle$ is a maximal disjoint set of nonzero elements of $B(\de) := \mcp(I)/\de$, we have $\jj(A_i) \neq 0_\ba$, $\jj(A_i) \cap \jj(A_j) = 0_\ba$ for each $i < j < \kappa$ and thus 
$\langle \jj(A_i) : i < \kappa \rangle$ is a pairwise disjoint set of nonzero elements in $\ba$. 
\end{proof}

\begin{cor} \label{f:cor}
Let $\mu < \lambda$ and let $\de$ be a regular $\lambda^+$-excellent filter on $\lambda$ given by
Theorem \ref{t:existence} in the case where $\jj(\mcp(I)) = \ba_{2^\lambda, \mu}$. 
Then no ultrafilter extending $\de$ built by the method of independent functions is $\lambda$-flexible. 
\end{cor}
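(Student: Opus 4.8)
The plan is to combine the $\mu^+$-chain condition on the quotient Boolean algebra with Shelah's Fact \ref{s:fact}, and then push non-flexibility downward from $\mu^+$ to $\lambda$. First I would fix an arbitrary ultrafilter $\de_* \supseteq \de$ built by the method of independent families of functions, and recall that any such construction proceeds (by Fact \ref{uf}, or the corresponding Boolean-triple version) by first passing to a filter $\de^\dagger$ with $\de \subseteq \de^\dagger \subseteq \de_*$ that is maximal modulo which the family $\gee$ being consumed is independent, and then consuming $\gee$. By Observation \ref{o:cc}, $B(\de)$ has the $\mu^+$-c.c.; since $\de \subseteq \de^\dagger$, the algebra $B(\de^\dagger)$ is a homomorphic image of $B(\de)$ and hence also has the $\mu^+$-c.c., so $\kappa := CC(B(\de^\dagger))$ is a regular cardinal with $\kappa \leq \mu^+$. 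Moreover, infinitely many $g \in \gee$ satisfy $|\rn(g)| > 1$: in the setting of Definition \ref{d:ba-filter1} every $g \in \gee$ has $|\rn(g)| = 2$ and there are $|\ba| \geq \aleph_0$ of them.

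With these hypotheses in place, Fact \ref{s:fact}, applied with $\de^\dagger$ in the role of its ``$\de$'', gives that $\de_*$ is not $\kappa$-flexible. It then remains to deduce that $\de_*$ is not $\lambda$-flexible, for which I would invoke the (routine) monotonicity of flexibility: if $\de_*$ were $\lambda$-flexible then, given any $\de_*$-nonstandard $n_*$ and a $\lambda$-regularizing family $X = \langle X_i : i < \lambda \rangle \subseteq \de_*$ with $\sigma_X \leq n_* \bmod \de_*$, the initial segment $\langle X_i : i < \kappa \rangle$ would be a $\kappa$-regularizing family whose size is pointwise at most $\sigma_X$, hence $\leq n_*$, so $\de_*$ would be $\kappa$-flexible. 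Since $\mu < \lambda$ gives $\kappa \leq \mu^+ \leq \lambda$, the contrapositive shows $\de_*$ is not $\lambda$-flexible; together with Conclusion \ref{not-flexible} this yields the non-saturation half of the Main Theorem.

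I expect the main obstacle to be matching the situation precisely to the hypotheses of Fact \ref{s:fact}: one must check that the independent-functions construction of $\de_*$ over $\de$ really does factor through a maximal filter of the required form, that the $\mu^+$-c.c. of $\ba_{2^\lambda,\mu}$ is inherited by that intermediate quotient, and --- since in Definition \ref{d:ba-filter1} the family $\gee$ is not freely independent but constrained by $\ba$ --- that the proof underlying Fact \ref{s:fact} still manufactures a function $\fss(\mu) \to \de_*$, uniform in the cardinality of its argument, that admits no multiplicative refinement. Everything else is bookkeeping, and the downward transfer of flexibility is immediate once stated.
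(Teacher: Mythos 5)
Your overall strategy agrees with the paper's: invoke Observation \ref{o:cc} to get the $\mu^+$-c.c., feed this into Shelah's Fact \ref{s:fact} to obtain non-$\mu^+$-flexibility, then descend to non-$\lambda$-flexibility by truncating a regularizing family. The explicit downward-monotonicity argument (take the initial segment $\langle X_i : i < \kappa \rangle$ of a $\lambda$-regularizing family) is a nice verification that the paper leaves implicit under the phrase ``the translation is direct.''

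However, there is a genuine error in the way you obtain the chain condition on the algebra to which Fact \ref{s:fact} is applied. You write that since $\de \subseteq \de^\dagger$, ``the algebra $B(\de^\dagger)$ is a homomorphic image of $B(\de)$ and hence also has the $\mu^+$-c.c.'' The first half is correct (there is a canonical surjection $\mcp(I)/\de \to \mcp(I)/\de^\dagger$), but chain conditions are \emph{not} in general inherited by homomorphic images of Boolean algebras. The standard counterexample: $\mcp(\omega)$ has the $\aleph_1$-c.c. (every maximal antichain is countable), yet its quotient $\mcp(\omega)/\mathrm{Fin}$ contains almost-disjoint families of size continuum and thus fails the $\aleph_1$-c.c. badly. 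So ``hence'' does not follow, and as written this step breaks the proof.

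The paper avoids this by showing that the intermediate filter you call $\de^\dagger$ is in fact $\de$ itself, so there is nothing to transfer. The point is that in Theorem \ref{t:existence} the filter $\de$ is the union of an increasing chain $\langle \de_{2,\alpha} : \alpha < 2^\lambda \rangle$ with $(I, \de_{2,\alpha}, (\gee|\ba) \cup \eff^\alpha)$ a good triple at each stage and $\eff^{2^\lambda} = \emptyset$; hence by Fact \ref{uf}(1) the final $\de$ is already maximal modulo which $\gee|\ba$ remains independent. Since $\gee|\ba$ is isomorphic to an independent family $\gee'$ of $2^\lambda$ functions each with range $\mu$, Fact \ref{f:cc} (equivalently, Observation \ref{o:cc}) gives $CC(B(\de)) = \mu^+$ directly, and Fact \ref{s:fact} applies with $\de$ and $\gee'$ in the roles of its $\de$ and $\gee$. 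There is no separate $\de^\dagger$ to worry about, and consequently no need for (and no validity to) a c.c.-inheritance argument. Once you replace that step with the maximality argument the proof is sound.
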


\begin{proof}
The translation is direct using Observation \ref{o:cc}.
For completeness, we justify compliance with the word ``maximal'' in Fact \ref{s:fact}. 
In the language of Theorem \ref{t:existence}, 
the filter $\de$ is built as the union of an increasing sequence of filters $\de_\alpha$, $\alpha < 2^\lambda$. 
For each $\alpha$, $(I, \de_\alpha, (\gee|\ba) \cup \eff^\alpha)$ is a good triple, and $\eff^{2^\lambda} = \emptyset$. 
Thus, by Fact \ref{uf}, $\de$ is maximal modulo which $\gee|\ba$ remains independent. But by construction, $\gee|\ba$
is isomorphic to $\ba$ and thus to an independent family $\gee^\prime$ of $2^\lambda$ functions each with domain $\lambda$ (or $I$) and range $\mu$.
Letting $\gee^\prime$ stand for $\gee$ in the statement of Fact \ref{s:fact} suffices.
\end{proof}

\begin{concl} \label{c:not-flexible}
Let $\mu < \lambda$ and let $\de$ be the regular $\lambda^+$-excellent filter on $\lambda$ given by
Theorem \ref{t:existence} in the case where $\jj(\mcp(I)) = \ba_{2^\lambda, \mu}$. 
Let $\de_* \supseteq \de$ be any ultrafilter constructed by the method of independent families of functions. 
Then $M^\lambda/\de_*$ is not $\lambda^+$-saturated whenever
$Th(M)$ is non-simple or non-low.
\end{concl}

\begin{proof}
By Corollary \ref{f:cor} and Conclusion \ref{not-flexible}.
\end{proof}

\section{\hspace{2mm}Lemmas for morality} \label{s:rg} 

Section \ref{s:flexible} gave non-saturation by a cardinality argument (small $c.c.$). By Theorem \ref{t:separation}, this shifts the
burden of saturation onto ``morality'' of an ultrafilter on $\ba$. 
Thus, in the next few sections of the paper, our aim is to show that there is an ultrafilter on $\ba = \ba_{2^\lambda,~\mu}$, Definition \ref{d:ba},
which is moral for $T_{rg}$, the theory of the random graph. However, we build a somewhat more general theory.

The key inductive step in constructing that ultrafilter will be to find a multiplicative refinement for each possibility pattern. 
We do this essentially in two stages. First, in this section and the next we show that each such possibility pattern can be covered by $\mu$ approximations each of which has a multiplicative refinement.
Second, we leverage these $\mu$ approximations to produce a multiplicative refinement for the original pattern. 

We first state several results which indicate in what sense the random graph can be seen as
``easier to saturate'' than theories with more dividing. 

\begin{thm-lit} \label{ek-thm} \emph{(Engelking-Kar\l owicz \cite{ek} Theorem 8 p. 284)}
Let $\mu \geq \aleph_0$.
The Cartesian product of not more than $2^\mu$ topological spaces 
each of which contains a dense subset of power $\leq \mu$
contains a dense subset of power $\leq \mu$. 
\end{thm-lit}

\begin{proof} (Sketch) 
Reduce to the case of identifying the dense subsets of the factors with discrete spaces on (at most) $\mu$ elements.
Theorem \ref{thm-iff} above guarantees the existence of an independent family $\eff \subseteq {^\mu}{\mu}$
with $|\eff| = 2^\mu$. Index the Cartesian product $X$ by (a subset of) elements of this family, so $X = \prod_{f \in \eff} X_f$ 
and let the function $\rho: \mu \rightarrow X$ be given by $\eta \mapsto \prod_{f \in \eff} f(\eta)$. 
Then the condition that $\eff$ is independent says precisely that the image of $\rho$ is dense in the
product topology.
\end{proof}

Note that the importance of ``$\mu \geq \aleph_0$'' in Theorem \ref{ek-thm} is for Theorem \ref{thm-iff} and for the conclusion; in particular, 
there is no problem if the dense subsets of the factors are finite. Recall that $T_{rg}$ is the theory of the random graph in the language
$\{ =, R \}$.

\begin{fact} \label{rg-fact}
If $\mu < \lambda \leq 2^\mu$, $A \subseteq \mathfrak{C}_{\trg}$, $|A| \leq \lambda$, 
then for some $B \subsetneq \mathfrak{C}_{\trg}$, $|B|=\mu$ we have that every nonalgebraic
$p \in S(A)$ is finitely realized in $B$.
\end{fact}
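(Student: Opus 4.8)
The plan is to deduce this from Engelking--Kar\l owicz (Theorem \ref{ek-thm}) via the observation that, since $\trg$ eliminates quantifiers, a nonalgebraic type $p \in S_1(A)$ is coded by the function $\eta_p : A \to 2$ recording to which elements of $A$ a realization of $p$ is joined (the conditions $x \neq a$, $a \in A$, being automatic from nonalgebraicity), and that ``$p$ is finitely realized in $B$'' then translates into a density statement about $\{\eta_b : b \in B\} \subseteq {}^A 2$ in the product topology, where $\eta_b(a) = 1$ iff $bRa$: concretely, one needs that for every finite $A_0 \subseteq A$ and every $\sigma : A_0 \to 2$ there is $b \in B$ with $b \notin A_0$ and $\eta_b \restriction A_0 = \sigma$.

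First I would apply Theorem \ref{ek-thm} to the product of $|A| \leq \lambda \leq 2^\mu$ copies of the two-point discrete space (each factor is its own dense subset, of power $2 \leq \mu$) to obtain a dense set $D \subseteq {}^A 2$ with $|D| \leq \mu$. Next, for each $\eta \in D$ the set of formulas $\{R(x,a)^{\eta(a)} : a \in A\} \cup \{x \neq a : a \in A\}$ is finitely satisfiable in $\trg$ by the extension axioms, hence realized in the monster model $\mathfrak{C}_{\trg}$ by $\lambda^+$-saturation (recall $|A| \leq \lambda$); call such a realization $b_\eta$, and note $b_\eta \notin A$. Put $B_0 = \{b_\eta : \eta \in D\}$: then $|B_0| \leq \mu$, and $\{\eta_b : b \in B_0\} \supseteq D$ is dense. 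Finally, given any nonalgebraic $p \in S_1(A)$ and finite $A_0 \subseteq A$, density supplies $\eta \in D$ with $\eta \restriction A_0 = \eta_p \restriction A_0$, so $b_\eta$ satisfies $b_\eta R a \iff \eta_p(a)=1$ for $a \in A_0$ and $b_\eta \notin A_0$; thus $b_\eta$ realizes the corresponding finite fragment of $p$, i.e. $p$ is finitely realized in $B_0$. Taking any $B$ with $B_0 \subseteq B \subsetneq \mathfrak{C}_{\trg}$ and $|B| = \mu$ completes the argument.

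I do not expect a serious obstacle here: essentially all of the combinatorial content is carried by Theorem \ref{ek-thm}, which has already been stated (and sketched) in the excerpt. The only points needing care are (i) the bookkeeping translation between ``finitely realized in $B$'' and density of $\{\eta_b : b \in B\}$ in $^A 2$, in the course of which one must use that nonalgebraicity of $p$ puts $x \neq a$ in $p$ for every $a \in A$ and hence forces the realizing elements $b_\eta$ to lie outside $A$; and (ii) observing that $|B| = \mu < |\mathfrak{C}_{\trg}|$ makes both the requirement $|B| = \mu$ and the strict inclusion $B \subsetneq \mathfrak{C}_{\trg}$ free. (The same argument applies verbatim to finite tuples, coding a nonalgebraic $n$-type by a function $A \to 2^n$ together with the finitely many relations among the variables, but for the application to the random graph the case of $1$-types suffices.)
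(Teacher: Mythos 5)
Your proof is correct and takes essentially the same route as the paper: both identify the Stone space of nonalgebraic $1$-types over $A$ with the product $^A 2$ via quantifier elimination, apply Engelking--Kar\l owicz to get a dense subset of power $\mu$, and realize that dense family in the monster by saturation. Your write-up is simply a more explicit version of the paper's sketch, spelling out the bookkeeping (the coding $\eta_p$, why nonalgebraicity forces $b_\eta \notin A$, and padding $B_0$ up to size exactly $\mu$) that the paper leaves implicit.
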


\begin{proof}
By quantifier elimination, it suffices to consider $\Delta = \{ xRy, \neg xRy \}$.
Write the Stone space $S_\Delta(A)$ of nonalgebraic types as the product of $\lambda$-many discrete 2-element Stone spaces 
$S_\Delta(\{a \})$ and apply the previous theorem. 
The dense family of size $\mu$ given by the theorem is, in our context, a family of types, 
and since the monster model is $\lambda^+$-saturated, 
we can realize each of them. Call the resulting set of realizations $B$. The hypothesis of density
means precisely that each nonalgebraic type in $S(A)$ is finitely realized in $B$.
\end{proof}

The next few facts simply restate these proofs in a different language. 

\begin{fact} \label{tcf}
If $\mu < \lambda \leq 2^\mu$, we can find a set $A \subset B = \{ \beta : \beta \in {^\lambda 2} \}$ 
such that $|A| \leq \mu$ and $A$ is dense in $B$ in the Tychonoff topology.
\end{fact}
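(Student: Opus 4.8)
The plan is to obtain Fact \ref{tcf} as a direct restatement of Fact \ref{rg-fact} (or, equivalently, of Theorem \ref{ek-thm}) by identifying the relevant Stone space with $^\lambda 2$. Concretely, I would fix $\mu < \lambda \leq 2^\mu$ and work with $B = \{ \beta : \beta \in {^\lambda 2} \}$ equipped with the Tychonoff (product) topology, where each factor $\{0,1\}$ carries the discrete topology. Each factor is a topological space with a dense subset of power $\leq \mu$ (indeed of power $2$), and there are $\lambda \leq 2^\mu$ factors, so Theorem \ref{ek-thm} applies verbatim and yields a dense subset $A \subseteq B$ with $|A| \leq \mu$. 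That is essentially the whole argument; the content is purely in recognizing that the hypotheses of Theorem \ref{ek-thm} are met.

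The only point requiring a word of care is the translation between the language of Fact \ref{rg-fact} and the language of Fact \ref{tcf}. In Fact \ref{rg-fact} one writes the Stone space $S_\Delta(A_0)$ of nonalgebraic $\Delta$-types over a parameter set $A_0$ of size $\lambda$, with $\Delta = \{xRy, \neg xRy\}$, as a product of $\lambda$-many $2$-element discrete Stone spaces $S_\Delta(\{a\})$, $a \in A_0$; an element of this product is exactly a function from $A_0$ (reindexed as $\lambda$) into $2$ recording, for each parameter, whether the realizing element is $R$-related to it or not. So $S_\Delta(A_0)$ is canonically homeomorphic to $^\lambda 2$ with the product topology, and the density statement ``every nonalgebraic $p \in S(A_0)$ is finitely realized in $B$'' becomes precisely ``the chosen set of basic configurations is dense in $^\lambda 2$.'' Thus I would simply note this homeomorphism and invoke the proof of Fact \ref{rg-fact}, reading off the desired $A \subset B$ from the dense family of size $\mu$ produced there. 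Alternatively, one can bypass the model theory entirely and cite Theorem \ref{ek-thm} with all factors equal to the two-point discrete space.

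I do not anticipate a genuine obstacle here: the statement is deliberately a repackaging, as the sentence preceding it (``The next few facts simply restate these proofs in a different language'') signals. The only thing to be careful about is not to claim more than density — we get $|A| \leq \mu$, not $|A| = \mu$, and $A$ is dense but need not be all of $B$ (indeed since $|B| = 2^\lambda \geq 2^\mu \geq \lambda > \mu$, properly $A \subsetneq B$), which is exactly what is asserted. So the proof I would write is: let all $\lambda$ factors of $^\lambda 2$ be the discrete two-point space, each trivially having a dense subset of size $\leq \mu$; since $\lambda \leq 2^\mu$, Theorem \ref{ek-thm} gives a dense $A \subseteq B$ with $|A| \leq \mu$, as required.
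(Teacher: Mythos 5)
Your proposal is correct and takes essentially the same approach as the paper: the paper gives no separate proof of Fact \ref{tcf}, flagging it with the preceding sentence as a restatement of the Engelking--Kar\l owicz theorem (Theorem \ref{ek-thm}) / Fact \ref{rg-fact}, which is precisely what you invoke — with all $\lambda \leq 2^\mu$ factors of ${^\lambda 2}$ taken to be the discrete two-point space.
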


\begin{fact} \label{f:suitable}
Let $\mu < \lambda \leq 2^\mu$, $\ba = \ba_{2^\lambda, ~\mu}$. 
Let $\ba_0 \subseteq \ba=\ba_{2^\lambda, ~\mu}$ be a Boolean subalgebra generated by $\lambda$ independent 
partitions of $\ba$, see Definition \ref{d:suitable} above,
and let $\ba_1$ be its completion in $\ba$. Then $\ba_1$ can be written as the union 
of $\mu$ ultrafilters.
\end{fact}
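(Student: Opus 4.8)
The plan is to identify $\ba_1$ with (an isomorph of) the regular open algebra of a product of $\lambda$ ``separable'' factors and then cover it by $\mu$ ultrafilters using Engelking--Kar\l owicz, in exactly the spirit of Fact~\ref{tcf} and Fact~\ref{rg-fact}. Write the $\lambda$ independent partitions as $\langle \{\mb_{\alpha,\epsilon} : \epsilon < \mu\} : \alpha < \lambda \rangle$, so that $\ba_0$ is generated by the $\mb_{\alpha,\epsilon}$, and by Definition~\ref{d:suitable} (independence) every \emph{cylinder} $\mc_{F,h} := \bigcap_{\alpha \in F}\mb_{\alpha,h(\alpha)}$, for $F \subseteq \lambda$ finite and $h : F \to \mu$, is nonzero. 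The first step is to observe that the cylinders $\{\mc_{F,h}\}$ are dense in $\ba_1$. This is routine: putting an arbitrary element of $\ba_0$ into disjunctive normal form over the finitely many generators occurring in it, using only the relations $\mb_{\alpha,\epsilon}\cap\mb_{\alpha,\zeta}=0$, one sees it is a finite union of ``rectangles,'' and every nonzero rectangle dominates some cylinder (here one uses that $\mu$ is infinite, so the complement of a \emph{finite} sub-join of a partition still dominates some $\mb_{\alpha,\epsilon}$); hence the cylinders are dense in $\ba_0$, and $\ba_0$ is dense in its completion $\ba_1$.

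Next, since $\lambda \leq 2^\mu$, I would fix an independent family $\eff = \{f_\alpha : \alpha < \lambda\} \subseteq {}^\mu\mu$ of distinct functions (Theorem~\ref{thm-iff}), and for $\eta < \mu$ set $d_\eta \in {}^\lambda\mu$ by $d_\eta(\alpha) = f_\alpha(\eta)$. Put $D = \{ d_\eta : \eta < \mu \}$, so $|D| \leq \mu$. Independence of $\eff$ says precisely that for every finite $F \subseteq \lambda$ and every $h : F \to \mu$ there is $d \in D$ with $d \rstr F = h$; this is the only place $\lambda \leq 2^\mu$ enters, and it is the same use as in Fact~\ref{tcf}.

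Then, for each $d \in D$, I would take $\uu_d^0$ to be the filter on $\ba_1$ generated by $\{ \mb_{\alpha,d(\alpha)} : \alpha < \lambda \}$. Its nonzero finite meets are exactly the cylinders $\mc_{F,\,d \rstr F}$, which are nonzero by independence, so $\uu_d^0$ is a proper filter; extend it (Zorn) to an ultrafilter $\uu_d$ on $\ba_1$. These at most $\mu$ ultrafilters then cover $\ba_1$: given $0 \neq \mb \in \ba_1$, choose a cylinder $\mc_{F,h} \leq \mb$ by the first step, choose $d \in D$ with $d \rstr F = h$ by the second step, and note $\mc_{F,h} = \bigcap_{\alpha \in F}\mb_{\alpha,d(\alpha)} \in \uu_d^0 \subseteq \uu_d$, so $\mb \in \uu_d$. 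Hence every nonzero element of $\ba_1$ lies in one of the $\uu_d$, which (padding with repetitions if $|D| < \mu$) is the assertion.

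The one delicate point I expect is the first step: that completing $\ba_0$ inside $\ba$ introduces no element lying below none of the cylinders. This is exactly the statement that $\ba_0$ is dense in $\ba_1$, which holds because the inclusion $\ba_0 \hookrightarrow \ba$ is a regular embedding --- independence makes the cylinders a ``free'' dense generating set, and $\ba = \ba_{2^\lambda,\mu}$ is itself a completion of such a configuration, so a maximal antichain of $\ba_0$ stays maximal in $\ba$. Everything else is the standard combinatorics of independent families of functions.
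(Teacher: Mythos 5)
The paper gives no proof of Fact~\ref{f:suitable}: it is introduced with the remark that the ``next few facts simply restate these proofs in a different language,'' pointing back to Facts~\ref{rg-fact} and \ref{tcf} and ultimately to the Engelking--Kar\l owicz theorem (Theorem~\ref{thm-iff}). Your argument is a correct and careful instance of exactly that translation --- density of cylinders in $\ba_1$, an independent family in ${^\mu\mu}$ of size $\lambda \leq 2^\mu$ giving the diagonal ``dense'' set $D \subseteq {^\lambda\mu}$, and the $\mu$ ultrafilters built from $D$ covering $\ba_1 \setminus \{0\}$ --- and your flagged ``delicate point'' (that $\ba_0$ is dense in the completion $\ba_1$, i.e.\ that the embedding is regular because the generating partitions are maximal antichains of $\ba$) is indeed the one step that deserves the care you give it.
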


\begin{obs} \label{cover}
Let $\mu < \lambda \leq 2^\mu$. 
Let $\ba = \ba_{2^\lambda, ~\mu}$ or $\ba = \mcp(I)/\de$ where $(I, \de, \gee)$ is $(\lambda, \mu)$-good. 
Let $\langle \ma_u : u \in \lao \rangle \subseteq \ba \setminus \{0_\ba\}$ be a  
sequence of elements of $\ba \setminus \{ 0_\ba \}$.
Then there are a complete subalgebra $\ba_1$ of $\ba$ 
and a sequence $\{ \ee_\epsilon : \epsilon < \mu \}$ of ultrafilters of $\ba_1$ such that
\begin{itemize}
\item for each $u \in \lao$, $\ma_u$ is supported by $\ba_1$, i.e. it is based on some partition of $\ba_1$
\item $\ba_1 \setminus \{ 0_\ba \}$ can be written as the union of these $\mu$ ultrafilters
\end{itemize}
In particular, for each $u \in \lao$ there is $\epsilon < \mu$ such that $\ma_u \in \ee_\epsilon$.
\end{obs}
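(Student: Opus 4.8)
\textbf{Proof plan for Observation \ref{cover}.}

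The plan is to reduce immediately to the construction already packaged in Fact \ref{f:suitable}. First I would handle the case $\ba = \ba_{2^\lambda,\mu}$. By Observation \ref{o:support} (applied with $\de = \{1_\ba\}$), the $\lao$-indexed sequence $\langle \ma_u : u \in \lao\rangle$, which has only $\lambda$ terms, is supported by $\fin_s(\gee')$ for some set $\gee' \subseteq \gee$ of at most $\lambda$ of the canonical independent partitions of $\ba$. Take $\ba_0$ to be the subalgebra generated by these $\leq \lambda$ partitions and $\ba_1$ its completion inside $\ba$; then each $\ma_u$ is based on a partition of $\ba_1$, giving the first bullet. For the second bullet I would invoke Fact \ref{f:suitable} verbatim: since $\mu < \lambda \leq 2^\mu$ and $\ba_1$ is the completion of a subalgebra generated by (at most, hence exactly after padding) $\lambda$ independent partitions, $\ba_1$ is the union of $\mu$ ultrafilters $\{\ee_\epsilon : \epsilon < \mu\}$. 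The final ``in particular'' clause is then immediate: each $\ma_u \neq 0_\ba$ and $\ma_u \in \ba_1$, so $\ma_u$ lies in some $\ee_\epsilon$ since every nonzero element of $\ba_1$ belongs to at least one of the $\mu$ covering ultrafilters.

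For the second case, $\ba = \mcp(I)/\de$ with $(I,\de,\gee)$ a $(\lambda,\mu)$-good triple, the argument is the same after transport along the isomorphism: by Observation \ref{o:cong} there is a surjective homomorphism $\jj : \mcp(I) \to \ba_{2^\lambda,\mu}$ with kernel giving $\mcp(I)/\de \cong \ba_{2^\lambda,\mu}$, so I would simply identify $\ba$ with $\ba_{2^\lambda,\mu}$ and apply the previous paragraph. Alternatively one works directly: by Observation \ref{o:support} (or \ref{dense-new}) the $\lambda$-many elements $\ma_u$ are supported by $\fin_s(\gee'|\ba)$ for some subfamily $\gee'$ of $\leq \lambda$ of the independent partitions, and then Fact \ref{f:suitable} applies to the completed subalgebra they generate.

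I expect the only real point requiring care is the cardinality bookkeeping: one must check that the sequence $\langle \ma_u : u \in \lao\rangle$ genuinely needs only $\leq \lambda$ independent partitions for its support, which is exactly what Observation \ref{o:support} delivers, and that ``$\lambda$ independent partitions'' in the hypothesis of Fact \ref{f:suitable} can be met by padding a possibly smaller family up to size $\lambda$ with further partitions from $\gee$ (harmless, since adding more coordinates to an independent family keeps it independent and only enlarges $\ba_1$). Everything else is a direct citation of results already established in Sections \ref{s:l-e} and \ref{s:rg}.
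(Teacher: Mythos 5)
Your proposal matches the paper's proof, which simply cites Observation \ref{o:support} and Fact \ref{f:suitable}; you have just spelled out the chain of reasoning those two citations compress, including the harmless padding-to-$\lambda$ step and the transport along Observation \ref{o:cong} for the second case.
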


\begin{proof}
By Observation \ref{o:support} and Fact \ref{f:suitable}.
\end{proof}

\begin{defn} \label{qr01} \emph{(The key approximation property)}
\begin{enumerate}
\item[($\qro$)] Let $\qro(T, \vp, \lambda, \mu)$ mean: $T$ is a complete countable first-order theory, $\vp$ is a formula in the language
of $T$, and $\lambda > \mu + |T|$. 
\item[($\qri$)] Let $\qri(T, \vp, \lambda, \mu)$ mean: $\qro(T, \vp, \lambda, \mu)$ and in addition if $(A)$ then $(B)$, where:
\begin{enumerate}
\item[(A)] Given
\begin{enumerate}
\item $\ba = \ba_{2^\lambda, ~\mu}$ as in Definition $\ref{d:ba}$
\item $\de_*$ is an ultrafilter on $\ba$
\item $\overline{\ma} = \langle \ma_u : u \in \lao \rangle$ is a $(\lambda, \ba, T, \vp)$-possibility
with $u \in \lao \implies \ma_u \in \de_*$
\end{enumerate}
\item[(B)] We can find $\langle \uu_\epsilon : \epsilon < \mu \rangle$ and 
$\langle \mb_{u,\epsilon} : u \in [\uu_\epsilon]^{<\aleph_0}, \epsilon < \mu \rangle$ such that:
\begin{enumerate}
\item $\epsilon < \mu \implies \uu_\epsilon \subseteq \lambda$, and $\bigcup \{ \uu_\epsilon : \epsilon < \mu \} = \lambda$
\item $\langle \mb_{u,\epsilon} : u \in [\uu_\epsilon]^{<\aleph_0} \rangle$ is a multiplicative refinement of 
$\overline{\ma}|_{[\uu_\epsilon]^{<\aleph_0}}$, with each $b_{u,\epsilon} \in \ba\setminus \{0_\ba\}$
\item if $\mb \in \de_*$ and $u \in [\lambda]^{<\aleph_0}$ then for some $\epsilon < \mu$ we have $\mb_{u,\epsilon} \land \mb > 0$.
\end{enumerate}
\end{enumerate}

\br
\item[ ] We write $\qri(T, \lambda, \mu)$ to mean that $\qri(T, \vp, \lambda, \mu)$ for all $\vp$ in some critical set
$\mathcal{C}_T$ of formulas for $T$, or alternately for some critical set of possibility patterns, Definition \ref{d:critical}.  
\end{enumerate}
\end{defn}

\begin{obs} \label{o:trans} Suppose that:
\begin{enumerate}
\item $\de$ is a regular, $\lambda^+$-excellent filter on $I$
\item $\de_1$ is an ultrafilter on $I$ extending $\de$
\item $\ba = \ba_{2^\lambda, \mu}$ is a Boolean algebra
\item $\jj : \mcp(I) \rightarrow \ba$ is a surjective homomorphism with $\de = \jj^{-1}(\{ 1_\ba \})$
\item $\vp = \vp(x,y)$ is a formula of $T$.
\item $\de_* = \{ \mb \in \ba : ~\mbox{if} ~\jj(A) = \mb ~\mbox{then}~ A \in \de_1 \}$
\end{enumerate}
If $(A)$ then $(B)$ where:
\begin{enumerate}
\item[(A)] $\qri(T, \vp, \lambda, \mu)$ holds in the case where $\ba$ in Definition \ref{qr01} is replaced by the quotient Boolean algebra $\mcp(I)/\de$.
\item[(B)] $\qri(T, \vp, \lambda, \mu)$. 
\end{enumerate}
\end{obs}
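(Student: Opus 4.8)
The plan is to observe that the stated hypotheses already force $\mcp(I)/\de \cong \ba = \ba_{2^\lambda,\mu}$, and that $\qri(T,\vp,\lambda,\mu)$ refers to the ambient Boolean algebra only up to isomorphism; so the content of the observation is transport of structure along this isomorphism. Concretely, $\jj$ induces a Boolean algebra isomorphism $\hat\jj : \mcp(I)/\de \to \ba$ via $A/\de \mapsto \jj(A)$: this map is well defined and injective since $\jj(A) = \jj(A')$ iff $\jj(A \Delta A') = 0_\ba$ iff $I \setminus (A \Delta A') \in \de$ iff $A \Delta A' = \emptyset \bmod \de$ (using $\jj^{-1}(\{1_\ba\}) = \de$), and it is surjective because $\jj$ is. Neither excellence of $\de$ nor $|T|^+$-goodness of $\de_1$ is used; these are listed only so that the hypotheses of Observation \ref{o:trans} match the configuration of Theorem \ref{t:separation}.

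Next, suppose the data of clause (A) of $\qri(T,\vp,\lambda,\mu)$ relative to $\ba$ is given, namely an ultrafilter $\de_*$ on $\ba$ and a $(\lambda,\ba,T,\vp)$-possibility $\overline{\ma} = \langle \ma_u : u \in \lao \rangle$ with each $\ma_u \in \de_*$. Pull everything back along $\hat\jj$: set $\de'_* = \hat\jj^{-1}(\de_*)$, an ultrafilter on $\mcp(I)/\de$, and $\ma'_u = \hat\jj^{-1}(\ma_u)$, so each $\ma'_u \in \de'_*$. The one thing needing verification is that $\overline{\ma}' = \langle \ma'_u : u \in \lao \rangle$ is again a possibility pattern, now over $\mcp(I)/\de$: clauses (a)--(c) of Definition \ref{d:poss} transfer because $\hat\jj^{-1}$ is an order isomorphism taking $\ba^+$ onto $(\mcp(I)/\de)^+$, and clause (d) transfers because, given any $\mb' \in (\mcp(I)/\de)^+$ satisfying the hypothesis of (d) for $\overline{\ma}'$ and some $u_* \in \lao$, the element $\mb := \hat\jj(\mb') \in \ba^+$ satisfies the corresponding hypothesis for $\overline{\ma}$ and $u_*$ (as $\hat\jj$ preserves $\leq$ and $1-(\cdot)$); hence (d) for $\overline{\ma}$ yields a model $M \models T$ and elements $a_\alpha$ with $M \models (\exists x)\bigwedge_{\alpha \in u}\vp(x;a_\alpha) \iff \mb \leq \ma_u$, and this reads verbatim as $M \models (\exists x)\bigwedge_{\alpha \in u}\vp(x;a_\alpha) \iff \mb' \leq \ma'_u$.

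Finally, apply hypothesis (A) of Observation \ref{o:trans} --- that is, $\qri(T,\vp,\lambda,\mu)$ with $\ba$ replaced by $\mcp(I)/\de$ --- to the data $(\mcp(I)/\de, \de'_*, \overline{\ma}')$, obtaining $\langle \uu_\epsilon : \epsilon < \mu \rangle$ and $\langle \mb'_{u,\epsilon} : u \in [\uu_\epsilon]^{<\aleph_0}, \epsilon < \mu \rangle$ satisfying clause (B) of Definition \ref{qr01} inside $\mcp(I)/\de$. Then push forward: keep the same $\uu_\epsilon$ and put $\mb_{u,\epsilon} = \hat\jj(\mb'_{u,\epsilon})$. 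Condition (B)(i) and $\bigcup_{\epsilon < \mu}\uu_\epsilon = \lambda$ are unchanged; (B)(ii) holds because $\hat\jj$ preserves $\cap$, $\subseteq$, and being nonzero, hence carries a multiplicative refinement of $\overline{\ma}'|_{[\uu_\epsilon]^{<\aleph_0}}$ to one of $\overline{\ma}|_{[\uu_\epsilon]^{<\aleph_0}}$; and (B)(iii) holds because for $\mb \in \de_*$ and $u \in [\lambda]^{<\aleph_0}$ we have $\hat\jj^{-1}(\mb) \in \de'_*$, so some $\epsilon$ gives $\mb'_{u,\epsilon} \land \hat\jj^{-1}(\mb) > 0_\ba$, whence $\mb_{u,\epsilon} \land \mb > 0_\ba$ on applying $\hat\jj$. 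Since $\qro(T,\vp,\lambda,\mu)$ is a statement about $T,\vp,\lambda,\mu$ only, it holds under either reading, and $\qri(T,\vp,\lambda,\mu)$ follows. I anticipate no genuine obstacle; the only mildly delicate point is the preservation of clause (d) of Definition \ref{d:poss} under $\hat\jj$, and even that amounts to the routine observation that (d) conditions its model-theoretic conclusion solely on Boolean inequalities, which an isomorphism respects.
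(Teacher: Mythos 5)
Your proof is correct, and it takes a somewhat more direct route than the paper's. The paper's own proof is a one-line citation of the Transfer Lemma \ref{l:trans}, which passes through the correspondence between possibility patterns in the Boolean algebra and accurate weak distributions of $\vp$-types in $\mcp(I)$. You instead observe that hypothesis (4) already forces $\mcp(I)/\de \cong \ba$ via the induced map $\hat\jj$, and then verify that the entire content of $\qri$ --- ultrafilters, possibility patterns (including the model-theoretic clause (d) of Definition \ref{d:poss}), multiplicative refinements, and the covering/largeness clause (B)(iii) --- is invariant under Boolean algebra isomorphism. What your version buys is transparency: it isolates the fact that the observation is a transport-of-structure statement about Boolean algebras, dispensing with the type/distribution machinery that the Transfer Lemma would import and that plays no essential role here. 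You are also right that the hypotheses involving $\de_1$ and goodness are inert for this observation; they appear only to align the notation with Theorem \ref{t:separation}. The one delicate point, preservation of clause (d), you handle correctly by noting that the model-theoretic conclusion is conditioned solely on Boolean inequalities between $\mb$ and the $\ma_u$, which $\hat\jj$ and $\hat\jj^{-1}$ respect. A small typographical slip: in verifying (B)(iii) you write $\mb'_{u,\epsilon} \land \hat\jj^{-1}(\mb) > 0_\ba$, but these primed elements live in $\mcp(I)/\de$, so the inequality at that stage is in $(\mcp(I)/\de)^+$; the subsequent application of $\hat\jj$ then gives the desired inequality in $\ba$, so the argument is unaffected.
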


\begin{proof}
By the Transfer Lemma \ref{l:trans}.
\end{proof}

\begin{obs} \label{o:trg}
The set $\{ \vp(x;y,z,w) = (z=w \implies xRy) \land (z\neq w \implies \neg xRy) \}$ is a critical set of formulas for the theory
of the random graph. Moreover, consistency of any set $S$ of instances of $\vp$ follows from consistency of all two-element subsets
of $S$.
\end{obs}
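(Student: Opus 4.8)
The plan is to derive both assertions from quantifier elimination for $\trg$, together with the extension axioms of the random graph and Fact \ref{phi-types}. The whole point will be that $\vp$ is a \emph{coding} formula: on a triple of parameters $(a,b,c)$ it is equivalent to $xRa$ when $b=c$ and to $\neg xRa$ when $b\ne c$, so — fixing once and for all two distinct elements $c_0\ne c_1$ — any constraint on $x$ built from the atoms $xRa$ and $\neg xRa$ becomes expressible as a $\vp$-type.

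\emph{First assertion (criticality).} By Fact \ref{phi-types} it suffices to show that if $M^\lambda/\de$ realizes every $\vp$-type over a parameter set of size $\le\lambda$, then it realizes every $\psi$-type over a parameter set of size $\le\lambda$ for each formula $\psi=\psi(x;\bar y)$ of $\trg$; the goodness hypothesis of Definition \ref{d:critical} is automatic since $\trg$ is countable, and the reverse implication is trivial since $\vp$ is itself a formula of $\trg$. First I would reduce a given complete $\psi$-type $q$ over $A$, $|A|\le\lambda$: by quantifier elimination each instance $\psi(x;\bar a)$ is, modulo $\trg$, a Boolean combination of atoms among $\{xRa_i,\ x=a_i\}$ and of atoms not involving $x$, so $q$ is determined by, and consistent with, a consistent assignment of truth values to $\{xRa,\ x=a : a\in A^{\ast}\}$, where $A^{\ast}\subseteq A$ is the finite set of parameters that actually occur. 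If that assignment makes some "$x=a$'' true, then $a$ already realizes $q$. Otherwise $A^{\ast}=P_0\sqcup P_1$ according to whether $\neg xRa$ or $xRa$ is demanded, and I would pass to the $\vp$-type $q^{+}$ over $A^{\ast}\cup\{c_0,c_1\}\cup\{d_a:a\in P_0\}$ — where each $d_a$ is chosen in $M^\lambda/\de$ to be a non-neighbor of $a$ lying outside $P_0$, possible since $\de$ is regular so that $M^\lambda/\de$, and each of its infinite $\emptyset$-definable subsets, has cardinality $2^\lambda>\lambda$ — consisting of $\vp(x;a,c_0,c_0)$ for $a\in P_1$, of $\vp(x;a,c_0,c_1)$ for $a\in P_0$, and of $\vp(x;d_a,c_0,c_0)$ for $a\in P_0$. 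This $q^{+}$ is consistent by the extension axioms (the demanded neighbors and non-neighbors form disjoint sets), its parameter set has size $\le\lambda$, and any realization $b$ satisfies $xRa$ for $a\in P_1$, $\neg xRa$ for $a\in P_0$, and — since $bRd_a$ while $\neg d_aRa$ — also $b\ne a$ for $a\in P_0$, while $bRa$ already forces $b\ne a$ for $a\in P_1$; hence $b$ realizes $q$. Then Fact \ref{phi-types} gives $\lambda^{+}$-saturation.

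\emph{Second assertion (2-locality of consistency).} By the same coding, a set $S$ of instances of $\vp$ is equivalent to $\{xRa:a\in P_1\}\cup\{\neg xRa:a\in P_0\}$, where $P_1$ lists the $y$-parameters of those instances whose $z$- and $w$-parameters agree and $P_0$ the remaining ones. By the extension axioms this set is consistent iff $P_0\cap P_1=\emptyset$; and $P_0\cap P_1\ne\emptyset$ exactly when two of the instances demand $xRa$ and $\neg xRa$ for a common $a$, that is, exactly when some two-element subset of $S$ is inconsistent. Thus $S$ is consistent iff every two-element subset of $S$ is.

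\emph{Main obstacle.} The only point requiring real care is the passage from a general $\psi$-type to a $\vp$-type in the first part: one must force the realization off the original parameter set — handled above by the auxiliary parameters $d_a$, exploiting irreflexivity of $R$ — while keeping the enlarged parameter set of size $\le\lambda$. Everything else is routine use of quantifier elimination and the extension property of the random graph.
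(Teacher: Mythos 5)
Your argument is correct and follows essentially the same route as the paper's one-line proof: quantifier elimination reduces saturation to atomic $R$- and $=$-types, the algebraic ones are realized automatically, and the non-algebraic $R$-patterns are precisely $\vp$-types; the second assertion is the observation that the only source of $\vp$-inconsistency is a clash $xRa \wedge \neg xRa$, which is already visible in a two-element subset. What you add, usefully, is the explicit auxiliary-parameter device $d_a$ forcing a realization of the coded $\vp$-type off the original parameter set; the paper's ``algebraic types are automatic'' leaves this implicit (one could instead note that the set of realizations of the coded $\vp$-type has cardinality $2^\lambda > |A|$). One small slip: $A^{\ast}$, the set of parameters actually occurring in $q$, need not be finite --- it has size $\le\lambda$ --- but nothing in your argument uses finiteness, so this is only cosmetic.
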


\begin{proof}
By quantifier elimination, since all algebraic types will be automatically realized in regular ultrapowers. 
\end{proof}

\begin{conv}
We will informally write instances of the formula from Observation \ref{o:trg} as 
$\vp(x;a, \trv)$ where $\trv \in \{ 0, 1\}$ is the truth value of $z=w$.
\end{conv}

\begin{lemma} \label{l:sequence}
Let $T$ be the theory of the random graph and $\vp$ the formula from Observation $\ref{o:trg}$. 
Then $\qri(T, \vp, \lambda, \mu)$, thus $\qri(T, \lambda, \mu)$. 
\end{lemma}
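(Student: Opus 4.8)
\medskip

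\noindent\emph{Proof plan.}
By Observation~\ref{o:trg} the single formula $\vp$ is a critical set for $T_{rg}$ and consistency of a family of instances of $\vp$ follows from consistency of its pairs, so it suffices to prove $\qri(T_{rg},\vp,\lambda,\mu)$; then $\qri(T_{rg},\lambda,\mu)$ follows from Definition~\ref{qr01}. Fix data as in (A) of Definition~\ref{qr01}: $\ba=\ba_{2^\lambda,\mu}$, an ultrafilter $\de_*$ on $\ba$, and a $(\lambda,\ba,T_{rg},\vp)$-possibility $\overline{\ma}=\langle\ma_u:u\in\lao\rangle$ with every $\ma_u\in\de_*$. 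The first step is a \emph{structural fact}: for $u\in\lao$ with $|u|\ge 2$,
\[
\ma_u=\bigcap\{\ma_{\{\alpha,\beta\}}:\{\alpha,\beta\}\subseteq u\}.
\]
The inclusion $\le$ is monotonicity (Definition~\ref{d:poss}(3)). For the reverse, if $\bigcap_{v\subseteq u,\,|v|\le 2}\ma_v\setminus\ma_u\ne 0_\ba$ one shrinks it to a nonzero $\mb$ deciding every $\ma_w$ ($w\subseteq u$), applies Definition~\ref{d:poss}(4) with $u_*=u$, and obtains parameters in a model of $T_{rg}$ whose $\vp$-incidence pattern on $\mcp(u)$ makes all pairs consistent but $u$ inconsistent, contradicting Observation~\ref{o:trg}. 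Thus $\overline{\ma}$ is determined by its singleton and pair values.

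Next apply Observation~\ref{cover} to $\overline{\ma}$: get a complete subalgebra $\ba_1\subseteq\ba$ supporting all $\ma_u$, and ultrafilters $\langle\ee_\epsilon:\epsilon<\mu\rangle$ of $\ba_1$ with $\ba_1\setminus\{0_\ba\}=\bigcup_{\epsilon<\mu}\ee_\epsilon$. For each $\epsilon$ put $V_\epsilon=\{\alpha<\lambda:\ma_{\{\alpha\}}\in\ee_\epsilon\}$ and define a graph on $V_\epsilon$ by declaring $\alpha,\beta$ adjacent iff $\ma_{\{\alpha,\beta\}}\notin\ee_\epsilon$. Applying Definition~\ref{d:poss}(4) to $3$- and $4$-element $u_*\subseteq V_\epsilon$ with $\mb$ the meet of the $\ma_w$ that lie in $\ee_\epsilon$ and the complements $1-\ma_w$ of the $\ma_w$ that do not ($w\subseteq u_*$) — an element of $\ee_\epsilon$, hence nonzero, and below each $\ma_{\{\gamma\}}$ — yields a model of $T_{rg}$ whose $\vp$-incidence pattern on $\mcp(u_*)$ is exactly $\{w:\ma_w\in\ee_\epsilon\}$; since in $T_{rg}$ a pair of instances of $\vp$ is inconsistent precisely when the parameters agree with opposite truth value, the graph has no induced triangle and no induced path on four vertices, hence is a disjoint union of complete bipartite graphs, so its independent sets are the ``one full side per connected component'' sets and their subsets. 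As $|V_\epsilon|\le\lambda\le 2^\mu$, Fact~\ref{tcf} gives at most $\mu$ such maximal independent sets, indexed by a $\mu$-dense subset of $2^{(\text{components})}$, so that every finite independent set lies in one of them. Collecting these over all $\epsilon<\mu$ and reindexing (there are $\le\mu\cdot\mu=\mu$ of them) gives $\langle\uu_\epsilon:\epsilon<\mu\rangle$; each $\uu_\epsilon$ is \emph{clean} for its underlying ultrafilter $\ee$, i.e. $u\in[\uu_\epsilon]^{<\aleph_0}\Rightarrow\ma_u\in\ee$ (by cleanness of pairs and the structural fact), and $\bigcup_\epsilon\uu_\epsilon=\lambda$ since each $\ma_{\{\alpha\}}\ne 0_\ba$ lies in some $\ee_\epsilon$. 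This is (B)(i).

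For each clean $\uu_\epsilon$, with underlying ultrafilter $\ee$, I would build the multiplicative refinement inside $\ee$. Writing $\mathbf{d}_{\alpha\beta}=\ma_{\{\alpha\}}\cap\ma_{\{\beta\}}\setminus\ma_{\{\alpha,\beta\}}$ for the conflict region of a pair in $\uu_\epsilon$ (note $\mathbf{d}_{\alpha\beta}\notin\ee$ by cleanness), the aim is to choose $\mc_{\{\alpha\},\epsilon}\in\ee$ with $\mc_{\{\alpha\},\epsilon}\le\ma_{\{\alpha\}}$ and $\mc_{\{\alpha\},\epsilon}\cap\mc_{\{\beta\},\epsilon}\cap\mathbf{d}_{\alpha\beta}=0_\ba$ for every pair, and then set $\mb_{u,\epsilon}=\bigcap_{\alpha\in u}\mc_{\{\alpha\},\epsilon}$. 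This is multiplicative by construction; it lies in $\ee\subseteq\ba_1$, hence is nonzero; and by the structural fact $\mb_{u,\epsilon}\le\bigcap_{\alpha\in u}\ma_{\{\alpha\}}\cap\bigcap_{\{\alpha,\beta\}\subseteq u}\ma_{\{\alpha,\beta\}}=\ma_u$, so (B)(ii) holds. \textbf{Constructing the $\mc_{\{\alpha\},\epsilon}$ is the crux and the main obstacle}: each conflict region $\mathbf{d}_{\alpha\beta}$ must be avoided by at least one of $\mc_{\{\alpha\},\epsilon},\mc_{\{\beta\},\epsilon}$, yet the join of the conflict regions incident to a vertex need not lie outside $\ee$ (since $\ee$ is not $\lambda^+$-complete), so one cannot merely delete them from a single $\ma_{\{\alpha\}}$. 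The disjoint-complete-bipartite shape of the conflict graph, together with $\lambda\le 2^\mu$, lets the ``avoidance burden'' be distributed among endpoints without overloading any vertex — this reflects the genericity of the random graph (its conflicts are bipartite, not spread out the way dividing would spread them) and is where the model theory genuinely enters.

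Finally, for (B)(iii): given $\mb\in\de_*$ and $u\in\lao$, note $\ma_u\in\de_*$ too, so $\ma_u\wedge\mb\in\de_*$ is nonzero; let $\mc$ be its least upper bound in the complete algebra $\ba_1$. Then $\mc\ne 0_\ba$, so $\mc\in\ee_\epsilon$ for some $\epsilon$; since $\mc\le\ma_u$ (as $\ma_u\in\ba_1$ bounds $\ma_u\wedge\mb$) and $\mc\le\mb^+$ (the least $\ba_1$-bound of $\mb$), both $\ma_u$ and $\mb^+$ lie in $\ee_\epsilon$. From $\ma_u\in\ee_\epsilon$ one gets $u\subseteq V_\epsilon$ with $u$ independent in the conflict graph, hence $u\subseteq\uu_{\epsilon'}$ for one of our clean sets over $\ee_\epsilon$; and $\mb_{u,\epsilon'}\in\ee_\epsilon\cap\ba_1$ together with $\mb^+\in\ee_\epsilon$ forces $\mb_{u,\epsilon'}\wedge\mb\ne 0_\ba$ (else $\mb\le 1-\mb_{u,\epsilon'}\in\ba_1$, so $\mb^+\le 1-\mb_{u,\epsilon'}$, contradicting $\mb^+\wedge\mb_{u,\epsilon'}\in\ee_\epsilon$). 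This gives (B)(iii), so $\qri(T_{rg},\vp,\lambda,\mu)$, and therefore $\qri(T_{rg},\lambda,\mu)$.
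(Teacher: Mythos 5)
Your framework is broadly on the right track — the use of Observation~\ref{cover} to get a supporting complete subalgebra $\ba_1$ covered by $\mu$ ultrafilters, the reduction to pair-consistency via Observation~\ref{o:trg}, and the appeal to the Engelking--Kar\l owicz dense family (Fact~\ref{tcf}) to control the truth-value assignments — all of these ingredients appear in the paper's proof. Your clause (B)(iii) argument and the structural fact ``$\ma_u = \bigcap_{\{\alpha,\beta\}\subseteq u}\ma_{\{\alpha,\beta\}}$'' are also fine. But there is a genuine gap at exactly the point you yourself flag as \textbf{``the crux and the main obstacle''}: the construction of the singleton elements $\mc_{\{\alpha\},\epsilon}$ such that $\mc_{\{\alpha\},\epsilon}\in\ee$, $\mc_{\{\alpha\},\epsilon}\le\ma_{\{\alpha\}}$, and $\mc_{\{\alpha\},\epsilon}\cap\mc_{\{\beta\},\epsilon}\cap\mathbf{d}_{\alpha\beta}=0_\ba$. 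You gesture at ``distributing the avoidance burden'' using the bipartite shape of the conflict graph, but you do not actually construct these sets, and the bipartite shape alone does not obviously give a construction (the join of the conflict regions at a vertex need not lie outside $\ee$, as you note, and the Boolean algebra $\ba_1$ carries no a priori structure that would let you apportion the conflict region $\mathbf{d}_{\alpha\beta}$ between its endpoints).

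The missing idea is the \emph{collision function}. The paper transfers to the reduced-product picture (via the Transfer Lemma~\ref{l:trans}) and, for each $\epsilon<\mu$, defines a partial function $F_\epsilon:\lambda\to\lambda$ by $F_\epsilon(i)=j$ when some $\mc_{h_{\epsilon,i}}\in\ee_\epsilon$ (a finite meet of generators of $\ba_1$) forces $a_i=a_j$ while forcing $a_i\ne a_{j_1}$ for all $j_1<j$. These concrete witness sets $\mc_{h_{\epsilon,i}}$ are the ``singleton elements'' you are looking for: they are constructed \emph{before} the truth values enter, they all lie in $\ee_\epsilon$ (so have nonzero finite meets), and their intersections pin down the exact collision pattern of the parameters. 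The sets $\uu_{\epsilon,\zeta}$ are then carved out by requiring $\trv_i = f_\zeta(F_\epsilon(i))$ for a dense family $\langle f_\zeta:\zeta<\mu\rangle$ of functions — so that, once the collision pattern is fixed, colliding parameters automatically have matching truth values and no conflict region can arise. In other words, the paper does not \emph{avoid} the conflict regions; it chooses the approximating sets $\uu_{\epsilon,\zeta}$ and the refinement $\mb_{u,\epsilon}=\bigwedge_{i\in u}\mc_{h_{\epsilon,i}}$ in tandem so that, on that intersection, there simply are no conflicts. Your approach fixes the $\uu_\epsilon$ first (as independent sets of the conflict graph) and only afterwards seeks the refinement, which is why you get stuck: you have thrown away precisely the index-set information (who collides with whom, witnessed where) that makes the construction possible.

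Two smaller remarks. First, your argument for the bipartite shape of the conflict graph and for ``at most $\mu$ maximal independent sets'' is loose: the number of maximal independent sets of a disjoint union of complete bipartite graphs can be $2^{\lambda}$; what Fact~\ref{tcf} actually supplies is a family of $\mu$ of them that is \emph{dense}, i.e. every finite independent set extends to one of the $\mu$, which is what you need for (B)(i) — you should say this. Second, be careful that $\mc_{\{\alpha\},\epsilon}\in\ee\subseteq\ba_1$ does not by itself give $\mb_{u,\epsilon}\ne 0_\ba$ ``because $\ee$ is an ultrafilter'': that requires $\mc_{\{\alpha\},\epsilon}\in\ba_1$, which you would need to arrange (in the paper this holds because the $\mc_{h_{\epsilon,i}}$ lie in $Y(\ba_1)$).
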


\begin{proof}  
Let $\de_*$ be an ultrafilter on $\ba$ and let $\overline{\ma} = \langle \ma_u : u \in \lao \rangle$ be a $(\lambda, \ba, T, \vp)$-possibility satisfying (A) of Definition \ref{qr01}.

It will suffice to prove Observation \ref{o:trans}(A),
thus we work in that setting, i.e. a reduced product where $\de$ is an excellent filter on $I$ and $\mcp(I)$
admits a homomorphism $\jj$ to $\ba$ with $\jj{-1}(\{ 1_\ba \})=\de$. 
Let $M$ be a fixed model of the random graph. 
By the Transfer Theorem, we may associate to $\overline{\ma}$ the (weak) distribution of a nonalgebraic type
\[ p = \{ \vp(x;a_i, \trv_i) : i<\lambda \} \] 
such that the elements $a_i$ belong to ${^IM}$ and $\trv_i \in \{ 0, 1\}$. 
We will give a series of definitions and assertions. 

\step{Step 0: A supporting subalgebra.} 
Apply Observation \ref{cover} to choose a complete subalgebra $\ba_1$ of $\ba$ and 
$\langle \ee^\prime_\epsilon: \epsilon < \mu \rangle$ a covering sequence of ultrafilters.
In what follows, we denote by $X(\ba_1)$ the set $\{ x_{\alpha, \epsilon} : \alpha < \lambda, \epsilon < \mu \}$ of generators of 
$\ba_1$. Note that each element of $\overline{\ma}$ is supported on a partition whose elements
are finite intersections of elements of $X(\ba_1)$. 

Denote by $Y(\ba_1)$ the set of nonempty finite intersections of elements of $X(\ba_1)$, where ``$\my$ is nonempty'' means
$\ba \models$ ``$\my \neq 0$''. This set is the direct analogue of
$\fin_s(\gee)$ in the case where functions $g_\alpha \in \gee$ correspond to $\{ x_{\alpha, \epsilon} : \epsilon < \mu \}$.

\step{Step 1: The collision function $F_\epsilon$.} 
For each $\epsilon < \mu$, define a partial function $F_\epsilon$ from $\lambda$ to $\lambda$ as follows.  
Let $F_\epsilon(i)=j$  if there is some $\mc_h \in Y(\ba_1)$ which witnesses this, which means:
\begin{enumerate}
 \item[($\alpha$)] $\mc_h \in \ee_\epsilon$
 \item[($\beta$)]  $\ba \models$ ``$\{ s \in I : a_i[s] = a_j[s] \} \geq \mc_h$''
 \item[($\gamma$)] if $j_1 < j$ then $\ba \models$ ``$\{ s \in I : a_i[s] = a_{j_1}[s] \} \land \mc_h = 0$''
\end{enumerate}

Note that in condition $(\beta)$ we may ask that ``$\ma_{\{i\}} \land \ma_{\{j\}} \land \{ s \in I : a_i[s] = a_j[s] \} \geq \mc_h$''.
However this is redundant here as $\overline{\ma}$ is a possibility pattern for the random graph, i.e. by choice of $\vp$, 
$\ba \models$ ``$\ma_u = 1$'' whenever $|u|=1$.
 
Note also that for any $\epsilon$ and $i$ there is at most one such $j$. (If not, let $j_1, \mc_{h_1}$ and $j_2, \mc_{h_2}$ be two distinct values
given with their associated witness sets, and notice that $\mc_{h_1} \land \mc_{h_2}$ witnesses both as $\ee_\epsilon$ is a filter, 
contradicting ($\gamma$) by the linear ordering of $\lambda$.) 
Furthermore, $F_\epsilon(i) \leq i$. 

For the remainder of the argument, let $\mc_{h_{\epsilon, i}}$ witness $F_\epsilon(i) = j$. 

\step{Step 2: `Injectivity' of $F_\epsilon$.} if $i_1 \neq i_2$ are from $\dm(F_\epsilon)$ and $j_1 = F_\epsilon(i_1)$, $j_2 = F_\epsilon(i_2)$ then 
\[ \ba \models \left\{ s \in I : a_{i_1}[s] = a_{i_2}[s] \right\} \land \left( \mc_{h_{\epsilon, i_1}} \land \mc_{h_{\epsilon, i_2}} \right) = 0 \]

If not, let $j = \operatorname{min}\{j_1, j_2\}$; then $j, ~\mc_{h_{\epsilon, i_1}} \land \mc_{h_{\epsilon, i_2}}$ contradicts Step 1 for one of the two 
values of $i$. (If $j_1 = j_2$, then $\mc_{h_{\epsilon, i_1}} \land \mc_{h_{\epsilon, i_2}} \in \ee_\epsilon$ contradicts 
$i_1 \neq i_2 \mod \ee_\epsilon$.)

\step{Step 3: The family of approximations, before re-indexing.} Let 
\[ \uu_{\epsilon, \zeta} = \left\{ i<\lambda ~:~ i \in \dm(F_\epsilon) ~\mbox{and}~ \trv_i = f_\zeta (F_\epsilon(i))  \right\} \]

Roughly speaking, we choose only the formulas $\vp(x;a_i)^{\trv_i}$ in the type whose parameter $a_i$ collides modulo $\ee_\epsilon$ 
(as recorded by $F_\epsilon$) with an element whose instance in the type has the same exponent (when filtered through $f_\zeta$,
an element of the dense family of functions). 

In Step 6 we will re-index the double subscript $(\epsilon, \zeta)$ but for now it is a little more transparent to leave it as a pair. 
Note that neither $\ee_\epsilon$ nor any of the sets $\mc_{h_{\epsilon, i}}$ depend on $\zeta$. 

\step{Step 4: The multiplicative refinements.} 
For each $\uu_{\epsilon, \zeta}$ from Step 3, 
and $u \in [\uu_{\epsilon, \zeta}]^{<\aleph_0}$, let
$\mb_{\epsilon, u} = \bigwedge_{i \in u} \mc_{h_{\epsilon, i}}$. 
Let us verify that 
\[ \langle \mb_{\epsilon, u} : u \in [\uu_{\epsilon, \zeta}]^{<\aleph_0} \rangle \] is indeed a multiplicative refinement of 
$\overline{\ma}|_{[\uu_{\epsilon, \zeta}]^{<\aleph_0}}$.

Since all of the $h_{\epsilon, i}$ are from $Y(\ba_1)$ and $h_{\epsilon, i} \in \ee_\epsilon$, 
none of these intersections is empty $\mod \de$ (in fact, they belong to $\ee_\epsilon$) and the assignment is multiplicative. 
Let us verify that it refines the original sequence. By Observation \ref{o:trg}, it suffices to check $|u|\leq 2$. 
The case $|u|=1$ follows by Step 1. Suppose then that $u = \{ i,j \}$. 
By Step 2 and the choice of $\uu_{\epsilon,\zeta}$, whenever $u \in [\uu_{\epsilon, \zeta}]^{<\aleph_0}$ we have that
\[ \ba \models ~\mbox{``}\mb_{u,\epsilon} \land \{ s \in I : a_i[s] = a_j[s], \trv_i \neq \trv_j \} = 0\mbox{''}\]
By choice of $\vp$ and the fact that $\overline{\ma}$ is a possibility pattern, it must be that 
$\ba \models$ ``$\mb_{u,\epsilon} \leq \ma_u$'' as desired. (That is, inconsistency in the random graph can only arise from
equality.)

\step{Step 5: Covering the type.}
In this step we show that the sequence of approximations covers the type, i.e. 
\[ [\lambda]^{<\aleph_0} = \bigcup \{ [\uu_{\epsilon, \zeta}]^{<\aleph_0} : \epsilon < \mu, \zeta < \mu \} \] 
Let $u \in [\lambda]^{<\aleph_0}$ be given, and let $\{ i_\ell : 0 < \ell \leq n \}$ list $u$. 

Informally, we find a set on which the given $n$-tuple is distinct and on which the  
collision function 
is well defined. This set will belong to some $\ee_\epsilon$ by Fact \ref{cover}, and by construction its image under $F_\epsilon$
is as desired. We then need to choose $\zeta$ corresponding to the correct pattern of positive and negative instances,
which we can do by Fact \ref{tcf}. 

More formally, let $\mx_u = \{ s : \bigwedge_{0<\ell<k<n} a_{i_\ell}[s] \neq a_{i_k}[s] \}$ be the set on which all parameters are distinct,
and note that $\mx_u \in \de_*$, so in particular is not $0_\ba$. 
Choose $\my_{h_\ell} \in Y(\ba_1)$, $j_\ell \in \lambda$ by induction on $\ell$, $1 \leq \ell \leq n$ as follows:
\begin{itemize}
\item Choose $h_0$ so that $\my_{h_0} \subseteq \mx_u$ $\mod \de$.
\item For $\ell = m+1 \leq n$, choose $\my_{h_\ell}, j_\ell$ such that $\my_{h_\ell} \leq \my_{h_m}$ and 
\[  \left\{ s : a_{i_m}[s] = a_{j_m}[s] \right\} \geq \mx_{h_\ell} ~~\mbox{and for no $j<j_\ell$ is } 
\{ s : a_{i_m}[s] = a_j[s] \} \land \mx_{h_\ell} \neq 0_\ba \]
\end{itemize}

Let $\epsilon < \mu$ be such that $\my_{h_n} \in \ee_\epsilon$ 
(which we can do by the choice of $\langle \ee_\epsilon : \epsilon < \mu \rangle$). 
Note that $\my_{h_n}$ witnesses $F_\epsilon(i_\ell) = j_\ell$ for $0 < \ell \leq n$. Also, by
choice of $\my_{h_0}$, we have that on $\my_{h_n}$, $\langle j_\ell : 0 < \ell \leq n \rangle$ has no repetitions. So as we chose the
sequence of functions $\langle f_\zeta : \zeta < \mu \rangle$ to be dense, there is $\zeta < \mu$ such that 
\[ \bigwedge_{0<\ell\leq n} f_\zeta(j_\ell) = \trv_{i_\ell} \]  
Now $\uu_{\epsilon, \zeta}$ is as required.

\step{Step 6: Re-indexing.} For notational alignment with Definition \ref{qr01}, re-index this family of triples by $\epsilon < \mu$.

\step{Step 7: Largeness.} Finally, we verify that if $\mb_* \in \de_*$ and $u \in [\lambda]^{<\aleph_0}$ 
then for some $\epsilon < \mu$ we have $\mb_{u,\epsilon} \land \mb_* > 0$.

Note that $\ma_u$ and $\mb_*$ both belong to $\de_*$. 
As $\ba_1$ supports $\overline{\ma}$ and $\de_*$ is an ultrafilter on $\ba_1$, 
we may find $\mx$ such that $\ba_1 \models$ ``$0 < \mx \leq (\ma_u \land \mb_*)$''. Since the sequence $\langle \ee_\epsilon : \epsilon < \mu \rangle$
was chosen to cover $\ba_1 \setminus 0_\ba$, choose $\epsilon_* < \mu$ such that $\mx \in \ee_{\epsilon_*}$. 
Note that we may choose $\epsilon_*$ so that in addition, $u \subseteq \uu_\epsilon$
(since the re-indexing in Step 6 amounted to absorbing the additional parameter $\zeta$).
Then $\mb_{u,\epsilon}$ is nonzero and intersects $\mx$, as they are both members of $\ee_\epsilon$. This completes the proof. 
\end{proof}

\begin{disc} \label{fsoq-discussion}
\emph{Two properties of the random graph which make this proof more transparent are 
first, that the question of whether a given distribution is consistent 
relies only on the pattern of incidence in the parameters, and second, this pattern does not admit too many inconsistencies 
(dividing, or long chains in the Boolean algebra) as described in Fact \ref{rg-fact} and its translation
\ref{tcf}.}

\emph{The class of theories in which consistency of distribution relies only on incidence is quite rich. 
They were studied and classified
in Malliaris \cite{mm5}, where it was shown that any such theory is dominated in the sense of Keisler's order 
either by the empty theory, by the random graph or by the minimum $TP_2$ theory, i.e. the model completion $T^*_{feq}$ of a parametrized
family of crosscutting equivalence relations. (Such theories have intrinsic interest, corresponding naturally to independence properties, and to assertions about
second-order structure on ultrapowers; in fact, the classification applied the second author's proof that there are only 
four second-order quantifiers.) It is clear from this result why ``consistency of distribution relies only on incidence'' 
is not enough to guarantee that the proof of Lemma \ref{l:sequence} goes through. 
In particular, the minimum $TP_2$ theory would allow us to carry out the part of the proof of
Lemma \ref{l:sequence} which had to do with distributing elements so their collisions are controlled by the functions $F_\epsilon$, 
but Fact \ref{rg-fact} would no longer
apply due to the amount of dividing, and the corresponding functions of Fact \ref{tcf} would thus need a larger domain (larger than $\mu$) to
properly code all possible types. Since any ultrafilter which saturates the minimum $TP_2$ theory $T^*_{feq}$ is flexible, as discussed
in \S \ref{s:overview},
the main theorem of this paper shows that the distinction between the random graph-dominated theories and the $T_{feq}$-dominated theories is indeed sharp.}
\end{disc}

\section{The moral ultrafilter} \label{s:morality}

In this section we construct an ultrafilter $\de_*$ on $\ba_{2^\lambda, \mu}$ which is moral for any $\qri$ theory,
and in particular for the theory of the random graph. Note that the random graph is minimum in Keisler's order among the unstable theories,
see \cite{mm-sh-v1} \S 4.

\begin{theorem} \label{t:morality}
Suppose $\mu < \lambda \leq 2^\mu$ and let $\ba = \ba_{2^\lambda,~\mu}$. Then there is an ultrafilter $\de_*$ on $\ba$ which is moral
for all countable theories $T$ such that $\qri(T, \lambda, \mu)$.  In particular, $\de_*$ is moral for all countable stable theories
and for the theory of the random graph. 
\end{theorem}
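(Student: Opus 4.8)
The plan is to build $\de_*$ by transfinite induction of length $2^\lambda$, adding one multiplicative refinement at each step, using the property $\qri(T, \lambda, \mu)$ to overcome the obstruction that the Boolean algebra $\ba = \ba_{2^\lambda,\mu}$ only has the $\mu^+$-c.c. First I would enumerate, in a list of length $2^\lambda$, all pairs $(\vp, \overline{\ma})$ where $\vp$ ranges over a critical set $\mathcal{C}_T$ of formulas (for all the countable $\qri$-theories $T$ simultaneously — note there are only countably many formulas in each countable language, and at most $2^\lambda$ possibility patterns, so the bookkeeping is feasible), and $\overline{\ma}$ is a $(\lambda, \ba, T, \vp)$-possibility pattern. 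We construct an increasing chain $\langle \de_\alpha : \alpha < 2^\lambda \rangle$ of filters on $\ba$, each generated over the previous by at most $\lambda$ new elements, starting from any filter $\de_0$ (e.g. the trivial filter $\{1_\ba\}$), and set $\de_* $ to be an ultrafilter extending $\bigcup_\alpha \de_\alpha$. The key invariant to maintain is that at stage $\alpha$ the filter $\de_\alpha$ is generated by fewer than $2^\lambda$ elements, so in particular it does not meet any prescribed "large" family to zero — more precisely, we need that $\de_\alpha$ remains a proper filter and retains enough room, which follows because $|\de_\alpha| < 2^\lambda = $ the number of "independent directions" available in $\ba_{2^\lambda,\mu}$ (formally, one argues via Observation \ref{o:support} that any such small filter is supported on a complete subalgebra generated by $\leq \lambda \cdot |\alpha|$ partitions, leaving the remaining partitions free).

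At a successor stage $\alpha+1$ we treat the $\alpha$-th pair $(\vp, \overline{\ma})$. If $\overline{\ma}$ is not a $(\lambda,\ba,T,\vp)$-possibility with all $\ma_u \in \de_\alpha$, do nothing: $\de_{\alpha+1} = \de_\alpha$. Otherwise, apply $\qri(T,\vp,\lambda,\mu)$ — first invoking Observation \ref{o:trans} if we wish to pull back to the reduced-product setting, though here we can work directly on $\ba = \ba_{2^\lambda,\mu}$ as stated in Definition \ref{qr01}(A) — to obtain $\langle \uu_\epsilon : \epsilon < \mu \rangle$ covering $\lambda$ and multiplicative refinements $\langle \mb_{u,\epsilon} : u \in [\uu_\epsilon]^{<\aleph_0} \rangle$ of $\overline{\ma}|_{[\uu_\epsilon]^{<\aleph_0}}$, with the largeness property (c): for every $\mb \in \de_\alpha$ and every $u$, some $\epsilon$ has $\mb_{u,\epsilon} \wedge \mb > 0$. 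The point of (c) is that the union $\bigcup_{\epsilon<\mu}\{\mb_{u,\epsilon} : u \in [\uu_\epsilon]^{<\aleph_0}\}$ is "dense below $\de_\alpha$" in the relevant sense, so we may now choose, for each $u \in [\lambda]^{<\aleph_0}$, a single $\epsilon(u) < \mu$ and set $\mb_u := \mb_{u, \epsilon(u)}$ — but we must do this coherently so that $\langle \mb_u : u \in \lao\rangle$ is genuinely multiplicative, not just piecewise. The trick is: since $\mu < \lambda \leq 2^\mu$, the number of "columns" $\mu$ is small, and we can refine by passing to a single $\epsilon$-fiber using a $\mu$-fold diagonalization — or, more carefully, absorb the choice of $\epsilon$ into the filter itself. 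That is, we first extend $\de_\alpha$ by adding one element $\mc_\epsilon$ for a suitably chosen single $\epsilon$ such that $\mc_\epsilon$ is compatible with $\de_\alpha$ and forces $\uu_\epsilon$ to be (up to a set in the filter) all of $\lambda$; then $\langle \mb_{u,\epsilon} : u \in \lao \rangle$ becomes, modulo the new filter, a multiplicative refinement of the whole of $\overline{\ma}$. Here property (c) with $\mb$ ranging over $\de_\alpha$ guarantees such an $\epsilon$ and such a $\mc_\epsilon$ exist without collapsing the filter.

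At limit stages take unions; the invariant on the number of generators is preserved since $2^\lambda$ has uncountable cofinality whenever it matters (and in any case we only ever add $\leq \lambda$ generators per step, $\lambda < 2^\lambda$, for $2^\lambda$ steps — but a union of $\leq 2^\lambda$ sets each of size $\leq\lambda$ can have size $2^\lambda$, so one must be slightly more careful: the right invariant is not "small" but rather that $\de_\alpha$ is supported, via Observation \ref{o:support}, on a complete subalgebra $\ba_1^\alpha$ generated by $\leq \lambda$ independent partitions of $\ba$, together with the fact that each step consumes only boundedly many "fresh" partitions from the $2^\lambda$ available, so at every stage $< 2^\lambda$ there remain $2^\lambda$ unused partitions; since each possibility pattern $\overline{\ma}$ and each refinement is itself supported on $\leq\lambda$ partitions by Observation \ref{o:support}, there is always room). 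After $2^\lambda$ steps, $\de := \bigcup_\alpha \de_\alpha$ is a filter such that every possibility pattern $\overline{\ma}$ (over the appropriate critical sets, for every $\qri$-theory $T$) with all $\ma_u \in \de$ has a multiplicative refinement in $\de$ — because such an $\overline{\ma}$, being supported on boundedly many partitions, has all its coordinates in $\de_\alpha$ for some $\alpha$, and was handled at stage $\alpha$ (using that each pair appears cofinally often, or once, in the enumeration). Extend $\de$ to an ultrafilter $\de_*$; since a multiplicative refinement with coordinates in $\de$ a fortiori has coordinates in $\de_*$, $\de_*$ is $(\lambda, \ba, T)$-moral for each such $T$. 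The "in particular" claims follow since stable theories and $T_{rg}$ satisfy $\qri(T,\lambda,\mu)$ — for the random graph by Lemma \ref{l:sequence}, and for stable $T$ because all $\vp$-types over sets of size $\leq\lambda$ are already accounted for by the $\mu$-covering (indeed stable theories have the stronger counting property making the argument of Lemma \ref{l:sequence} go through trivially).

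The main obstacle I expect is precisely the coherence issue flagged above: property (c) of $\qri$ gives us a family of $\mu$ \emph{local} multiplicative refinements that collectively cover $[\lambda]^{<\aleph_0}$ and are dense below the current filter, but producing a \emph{single global} multiplicative refinement requires selecting one $\epsilon$-fiber (or amalgamating fibers) in a way that stays multiplicative and does not collapse $\de_\alpha$ to $0$. Getting the filter-extension step exactly right — choosing the new generator $\mc_\epsilon$ so that $\uu_\epsilon$ becomes cofinitely everything modulo the extended filter while the extended filter stays proper — is where the inequality $\lambda \leq 2^\mu$ and the $\mu^+$-c.c. of $\ba_{2^\lambda,\mu}$ genuinely enter, and is the technical heart of the argument; the rest is standard bookkeeping for an ultrafilter construction via independent partitions as in \cite{Sh:c} Chapter VI.
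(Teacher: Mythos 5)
Your overall scaffolding (transfinite induction on $2^\lambda$, enumeration of possibility patterns with cofinal repetition, bookkeeping via independent partitions so that fewer than $2^\lambda$ partitions are consumed at any stage, Observation \ref{o:support} to localize supports, extending the final filter to an ultrafilter) matches the paper, and you correctly identify where the real difficulty lies. But the crucial amalgamation step is where your proposal goes wrong, and it is not a minor gap.

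Your plan is to pass to a single $\epsilon$-fiber: ``extend $\de_\alpha$ by adding one element $\mc_\epsilon$ \dots such that $\mc_\epsilon$ \dots forces $\uu_\epsilon$ to be (up to a set in the filter) all of $\lambda$.'' This cannot work, because $\uu_\epsilon$ is a subset of the \emph{index set} $\lambda$ that labels the coordinates of the possibility pattern, not an element of the Boolean algebra $\ba$. Enlarging the filter changes which Boolean elements are ``big,'' but it has no effect whatsoever on which $u \in \lao$ lie in $[\uu_\epsilon]^{<\aleph_0}$; the coordinates $i \in \lambda\setminus\uu_\epsilon$ simply have no $\mb_{u,\epsilon}$ defined, and no filter extension can manufacture them. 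Property (c) of $\qri$ gives density of the family $\{\mb_{u,\epsilon}\}$ below $\de_\alpha$, but that is a statement quantified over all $\epsilon$ simultaneously, and it gives no reason to expect any single $\epsilon$ to dominate. So the ``diagonalization/absorb into the filter'' step as written would stall.

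The paper's successor step solves this by a different device: it does \emph{not} pick one fiber, but sums across all of them along a fresh independent partition. Concretely, Observation \ref{o:support} yields $\gee^\prime \subseteq \gee^\beta$, $|\gee^\prime| \le \lambda$, supporting both $\overline{\ma}$ and all the $\mb_{u,\epsilon}$; one then picks a partition $g_\gamma = \{\mx_{\gamma,\epsilon} : \epsilon < \mu\}$ in $\gee^\beta\setminus\gee^\prime$ that is independent of everything so far, and sets
\[
  \mb_u \;=\; \bigcup_{\epsilon < \mu} \bigl( \mx_{\gamma,\epsilon} \cap \mb_{u,\epsilon} \bigr).
\]
Every $u$ lies in some $[\uu_\epsilon]^{<\aleph_0}$ since $\bigcup_\epsilon \uu_\epsilon = \lambda$, so $\mb_u$ is never trivially empty. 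Multiplicativity is now automatic because the $\mx_{\gamma,\epsilon}$ are pairwise disjoint: the cross terms in $\mb_u \cap \mb_v$ vanish, and each diagonal term collapses by multiplicativity of the $\epsilon$-th local refinement. Freshness of $g_\gamma$ guarantees that the extended triple $(\ba, \de_\alpha, \gee^\alpha)$ stays (pre-)good and the filter proper. This single trick --- gluing the $\mu$ local refinements along a spare $\mu$-sized partition --- is exactly what is missing from your proposal, and without it the rest of the bookkeeping, though correct, has nothing to hang on.

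One smaller point: your ``in particular'' argument for stable theories asserts they satisfy $\qri(T,\lambda,\mu)$ directly, which you do not prove. The paper gets stable theories for free via Theorem \ref{t:separation} and the fact that the random graph lies strictly above all stable theories in Keisler's order; any $\de_*$ moral for $T_{rg}$ must therefore be moral for stable $T$ as well. Either route is fine in spirit, but the paper's is actually supported by cited results.
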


\begin{proof}
We first prove the ``in particular'' clause. Any such ultrafilter $\de_*$ will be moral for the random graph by Lemma \ref{l:sequence} above.
Moreover, any unstable theory (so in particular the random graph) is strictly above the stable theories in Keisler's order, 
see \cite{Sh:c} Theorem VI.0.3 p. 323. Thus by Theorem \ref{t:separation}, 
any $\de_*$ moral for the random graph must be moral for countable stable theories as well.

In the remainder of the proof, we construct the ultrafilter $\de_*$.

\step{Step 1: Setup for inductive construction of $\de_*$.}
We now build the ultrafilter $\de_*$. 
Enumerate the generators of $\ba$ as $\langle \mx_{\alpha, \epsilon} : \alpha < 2^\lambda, \epsilon < \mu \rangle$  
in the notation of Definition \ref{d:ba}. 
Let $\langle \overline{\ma}_\alpha : \alpha < 2^\lambda \rangle$ be an enumeration of all relevant $(\lambda, \ba, T)$-possibilities,
with each possibility occurring $2^\lambda$-many times. 
[On counting: Note that there are, upto renaming of symbols, at most continuum many complete countable theories, 
so at most continuum many theories such that $\qri(T, \lambda, \mu)$. Moreover,
since we may identify the possibility patterns with sequences from $\fss(\lambda)$ into $\ba$, $|\ba| = 2^\lambda$ 
there are no more than $2^\lambda$ patterns for each theory.]   

We build by induction on $\alpha < 2^\lambda$ a continuous increasing sequence of filters $\de_\alpha$ of $\ba$ and a continuous
decreasing sequence of independent partitions $\gee^\alpha$ satisfying the following conditions. 

\begin{enumerate}
\item $\beta < \alpha < 2^\lambda$ implies $\de_\beta \subseteq \de_\alpha$ are filters on $\ba$
\item $\alpha$ limit implies $\de_\alpha = \bigcup \{ \de_\beta : \beta < \alpha \}$
\item $\beta < \alpha < 2^\lambda$ implies $\gee^\alpha \subseteq \gee^\beta \subseteq \gee$
\item $\alpha < 2^\lambda$ implies $|\gee^\alpha| = 2^\lambda$
\item $\alpha$ limit implies $\gee^\alpha = \bigcap \{ \gee^\beta ~:~ \beta < \alpha \}$
\item $\alpha = \beta + 1$ implies that if $\overline{\ma}_\beta$ is a sequence of elements of $\de_\beta$ then
there is a multiplicative refinement $\overline{\mb}$ of $\overline{\ma}_\beta$ consisting of elements of $\de_\alpha$ 
\item $\alpha < 2^\lambda$ implies that $(\ba, \de_\alpha, \gee^\alpha)$ is a $(2^\lambda, \mu)$-good Boolean triple 
\item $\de_* = \de_{2^\lambda} = \bigcup \{ \de_\alpha : \alpha < 2^\lambda \}$
\end{enumerate}

For the case $\alpha = 0$, let $\de_0 = \{ 1_\ba \}$, $\gee = \{ \{ \mx_{ \alpha, \epsilon } : \epsilon < \mu \} : \alpha < 2^\lambda \}$
be the set of generators of $\ba$ from Definition \ref{d:ba}. 

The limit cases are uniquely determined by the inductive hypotheses (2),(8), and consistent by (the direct translation of) Fact \ref{uf} above. 

\step{Step 2: The successor stage.}
Thus the only nontrivial point is the case $\alpha = \beta+1$. Let $\overline{\ma}_\beta$ be given, and suppose that
$u \in \lao \implies \ma_u \in \de_\beta$; if not, choose $\de_\alpha$ to satisfy condition (4) and continue to the next step.

Noting that we have assumed $u \in \lao \implies \ma_u \in \de_\beta$, let $\de_* \supseteq \de_\beta$ be any ultrafilter on $\ba$
(thus on $\ba_\beta$). Then we may apply Definition \ref{qr01}(A) with
$\overline{\ma}$, $\de_*$, and $\ba_\beta$ in place of $\ba$.

Let $\langle \uu_\epsilon : \epsilon < \mu \rangle$ and $\langle \mb_{u,\epsilon} : u \in [\uu_\epsilon]^{<\aleph_0}, \epsilon < \mu \rangle$
be the objects returned by Definition \ref{qr01}(B). By Definition \ref{qr01}(B)(2), 
for each $\epsilon < \mu$ and $u \in [\uu_\epsilon]^{<\aleph_0}$, we have that $\mb_{u,\epsilon} \in \de^+$, i.e. $\neq \emptyset \mod \de$.

Let $W_\beta = \{\ma_u : u \in \lao \} \cup \{ \mb_{u,\epsilon} : u \in [\uu_\epsilon]^{<\aleph_0}, \epsilon < \mu \}$. 
Since these are all $\de$-nonempty sets, apply Observation \ref{o:support}
to obtain $\gee^\prime \subseteq \gee^\beta$ such that $|\gee^\prime| \leq \lambda$ and 
each element of $W_\beta$ is supported in $\fin_s(\gee^\prime)$.

Let $g_\gamma = \{ \mx_{\gamma, \epsilon} : \epsilon < \mu \}$ be any element of $\gee^\beta \setminus \gee^\prime$.
Let $\gee^\alpha = \gee^\beta \setminus ( \gee^\prime \cup \{ g \} )$.

Now we define the proposed multiplicative refinement. For each $u \in \lao$, define
\[ \mb_u = \bigcup \{ x_{\gamma, \epsilon} \cap \mb_{u,\epsilon} : \epsilon < \mu \} \]
Let $\overline{\mb} = \langle \mb_u : u \in \lao \rangle$. We verify that it is multiplicative:

\begin{align*}
\mb_u \cap \mb_v & = \bigcup \{ \mx_{\gamma, \epsilon} \cap \mb_{u,\epsilon} : \epsilon < \mu \} \cap \bigcup \{ \mx_{\gamma, \epsilon} \cap \mb_{v,\epsilon} : \epsilon < \mu \} \\
 & = \bigcup \{ \mx_{\gamma, \epsilon} \cap (\mb_{u,\epsilon} \cap \mb_{v,\epsilon}) : \epsilon < \mu \} \\
 & ~\mbox{as each approximation is multiplicative} \\
 & = \bigcup \{ \mx_{\gamma, \epsilon} \cap (\mb_{u \cup v,\epsilon}) \} \\
 & = \mb_{u \cup v} \\
\end{align*}

We now show these sets generate a filter which retains enough independence to satisfy (7). 
Let $u \in \lao$ be given. Since $\langle \mb_u : u \in \lao \rangle$ is monotonic, and since, 
as remarked above, each $\mb_u \neq \emptyset \mod \de_\beta$, the set
$\de_\beta \cup \{ \mb_u : u \in \lao \}$ generates a filter which we call $\de^\prime_\alpha$.

Let $A_{h^\prime} \in \fin_s(\gee^\alpha)$. By choice of $\gee^\prime$, there is a nonzero
$A_h \in \fin_s(\gee^\prime)$ such that $\ba \models$ ``$A_h \leq \mb_{u,\epsilon}$''. By the inductive
hypothesis of independence (7), since $\gee^\prime$, $\{ g_\gamma \}$, $\gee^\alpha$ have pairwise empty intersection,
\[ \ba \models \mbox{``} \mb_{u,\epsilon}  \supseteq A_h \cap A_{h^\prime} \cap \mx_{\gamma, \epsilon} \neq 0 \mbox{''} \]
We have shown that $(\ba, \de^\prime_\alpha, \gee^\alpha)$ is a pre-good Boolean triple.
Without loss of generality, extend $\de^\prime_\alpha$ to a filter $\de_\alpha$ so that 
$(\ba, \de_\alpha, \gee^\alpha)$ is a good Boolean triple. This completes the successor stage. 

\step{Step 4: Finish.} 
Note that $\de_*$ will be an ultrafilter, and likewise $\gee^{2^\lambda}$ will be empty by Fact \ref{uf}, as explained at the beginning of 
this section.
This completes the proof.
\end{proof}

\section{The dividing line} \label{s:dividing-line}

\begin{theorem} \label{main-theorem}
Let $\mu < \lambda \leq 2^\mu$. Then there is a regular ultrafilter $\de$ on $\lambda$ such that:
\begin{enumerate}
\item for any countable theory $T$ such that $\qri(\lambda, \mu, T)$ and $M\models T$,
$M^\lambda/\de$ is $\lambda^+$-saturated.
\item in particular, when $T$ is stable or $T$ is the theory of the random graph,
$M^\lambda/\de$ is $\lambda^+$-saturated.
\item for any non-low or non-simple theory $T$ and $M \models T$, $M^\lambda/\de$ is not $\lambda^+$-saturated.
\end{enumerate}
Thus there is a dividing line in Keisler's order among the simple unstable theories. 
\end{theorem}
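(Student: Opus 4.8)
The plan is to glue together the four main ingredients of the paper — the existence theorem (Theorem \ref{t:existence}), separation of variables (Theorem \ref{t:separation}), the moral ultrafilter theorem (Theorem \ref{t:morality}), and the non-flexibility conclusion (Conclusion \ref{c:not-flexible}) — which have been set up for precisely this assembly. First I would apply Theorem \ref{t:existence} with $\ba = \ba_{2^\lambda, \mu}$ (a $\mu^+$-c.c.\ complete Boolean algebra of cardinality $\leq 2^\lambda$, since $\mu \leq \lambda$): this yields a regular $\lambda^+$-excellent filter $\de_0$ on $\lambda$ together with a surjective homomorphism $\jj : \mcp(\lambda) \to \ba$ with $\de_0 = \jj^{-1}(\{1_\ba\})$. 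Independently, since $\mu < \lambda \leq 2^\mu$, Theorem \ref{t:morality} supplies an ultrafilter $\de_*$ on $\ba$ which is $(\lambda, \ba, T)$-moral for every countable $T$ with $\qri(T, \lambda, \mu)$, in particular for all countable stable theories and for $\trg$. I then set $\de := \jj^{-1}(\de_*) = \{ A \subseteq \lambda : \jj(A) \in \de_* \}$.

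Next I would check that $(\de_0, \de, \ba, \jj, \de_*)$ meets the hypotheses of Theorem \ref{t:separation}. Since $\jj$ is a surjective homomorphism and $\de_*$ an ultrafilter, $\de$ is an ultrafilter on $\lambda$; it extends $\de_0$ and hence is regular; and surjectivity of $\jj$ gives the identity $\de_* = \{ \mb \in \ba : \jj(A) = \mb \implies A \in \de \}$, which is hypothesis (5). Hypothesis (2), that $\de$ be $|T|^+$-good, is automatic for countable $T$ because every regular ultrafilter is $\aleph_1$-good. Now fix a countable $T$ with $\qri(T,\lambda,\mu)$: $\de_*$ is $(\lambda, \ba, T)$-moral by Theorem \ref{t:morality}, so by the equivalence in Theorem \ref{t:separation}, $M^\lambda/\de$ is $\lambda^+$-saturated for every $\lambda^+$-saturated $M \models T$, and then for every $M \models T$ by Theorem \ref{backandforth}. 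Since stable theories and $\trg$ satisfy $\qri$ (Theorem \ref{t:morality}, Lemma \ref{l:sequence}), this gives clauses (1) and (2).

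For clause (3), I would invoke Observation \ref{o:cc} to see that $\mcp(\lambda)/\de_0$ has the $\mu^+$-c.c., and note that $\de \supseteq \de_0$ is an ultrafilter built by the method of independent families of functions: the construction of $\de_0$ in Theorem \ref{t:existence} rests on Corollary \ref{f:basic}, and the construction of $\de_*$ in Theorem \ref{t:morality} consumes the independent partitions of $\ba_{2^\lambda,\mu}$, which (as in the proof of Corollary \ref{f:cor}) is isomorphic to an independent family of $2^\lambda$ functions of range $\mu$, so the composite is of the required form. Hence Conclusion \ref{c:not-flexible} applies and $M^\lambda/\de$ fails to be $\lambda^+$-saturated whenever $Th(M)$ is non-low or non-simple, which is clause (3).

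Finally, for the dividing line: by clause (2), $M^\lambda/\de$ is $\lambda^+$-saturated for $M \models \trg$, while by clause (3), $N^\lambda/\de$ is not $\lambda^+$-saturated for $N \models T'$ whenever $T'$ is non-low. Fixing a simple non-low theory $T'$ (such exist, and are automatically unstable since stable theories are low), this witnesses $T' \not\kleq_\lambda \trg$, hence $T' \not\kleq \trg$; since $\trg$ lies in the minimum class $\mct_{min}$ among the unstable theories (Theorem \ref{k-known}), $\trg \kleq T'$ but not conversely, so $\trg$ and $T'$ occupy distinct $\kleq$-classes — a dividing line within the simple unstable theories. I expect the only real friction to be the two bookkeeping checks flagged above: that the tuple fed into Theorem \ref{t:separation} honestly satisfies its hypotheses (especially the identity defining $\de_*$ and the $|T|^+$-goodness of $\de$), and that the pulled-back ultrafilter $\jj^{-1}(\de_*)$ genuinely counts as one "built by the method of independent functions," so that Conclusion \ref{c:not-flexible} is available.
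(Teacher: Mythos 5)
Your proposal follows exactly the paper's own proof: apply Theorem \ref{t:existence} to get the excellent filter $\de_0$ and homomorphism $\jj$, apply Theorem \ref{t:morality} to get the moral ultrafilter $\de_*$ on $\ba_{2^\lambda,\mu}$, set $\de = \jj^{-1}(\de_*)$, and conclude saturation via Theorem \ref{t:separation} and non-saturation via Conclusion \ref{c:not-flexible}. You fill in a few details the paper leaves implicit (the use of Theorem \ref{backandforth} to pass from $\lambda^+$-saturated index models to arbitrary ones, and the check that $\de$ qualifies as ``built by independent families''), but the decomposition and the key steps are the same.
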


\begin{proof}
By Theorem \ref{t:separation}, the construction problem separates into a problem of excellence and a problem of morality.

By Theorem \ref{t:existence} there is a $\lambda$-regular, $\lambda^+$-excellent filter $\de_0$ on $\lambda$ 
which admits a surjective homomorphism  $\jj : \mcp(I) \rightarrow \ba = \ba_{2^\lambda, \mu}$ such that $\de_0 = \jj^{-1}(\{ 1_\ba \})$.  

By Theorem \ref{t:morality} there is an ultrafilter $\de_*$ on $\ba_{2^\lambda, \mu}$ 
which is moral for any countable theory $T$ such that $\qri(T, \lambda, \mu)$.

Now let $\de$ be $\{ \jj^{-1}(\mb) : \mb \in \de_* \}$.
Then $\de$ is an ultrafilter and is regular as it extends $\de_0$. 
$\de$ satisfies conditions (1) and (2) by Theorems \ref{t:morality} and \ref{t:separation}, and 
$\de$ satisfies condition (3) by Conclusion \ref{c:not-flexible}.  
This completes the proof. 
\end{proof}

\vspace{10mm}

\section{Appendix: Excellence and goodness} \label{s:appendix}

Here we complete the characterization of good filters as excellent filters.

\begin{rmk}
Though we show here that good implies excellent, 
it is a priori not clear whether natural versions or relatives of goodness (meaning, in our context, ``good for $T$'' or for
families of theories) correspond to the analogous restrictions of excellence, e.g. accuracy of so-called possibility patterns.
\end{rmk}

\begin{claim} \label{c:e-g}
Assume $\de$ is a filter on the Boolean algebra $\ba$. If $\de$ is $\lambda^+$-good then $\de$ is $\lambda^+$-excellent.
\end{claim}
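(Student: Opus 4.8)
The plan is to run the inductive step of Claim~\ref{e-ind-step} with $\lambda^+$-goodness playing the role of the single function from an independent family used there. First I would isolate the data. Fix $\overline{A} = \langle A_u : u \in \lao \rangle$ with $A_u \in \ba$, and for each $u \in \lao$ set $Y_u := \bigcup\{\,\sigma(\overline{A}|_{\mcp(u)}) : \sigma \text{ a Boolean term in } \xpu \text{ with } \sigma(\overline{A}|_{\mcp(u)}) = 0 \bmod \de\,\}$. Exactly as in Claim~\ref{e-ind-step}, although this union is over infinitely many terms only finitely many distinct values occur --- they all lie in the finite subalgebra of $\ba$ generated by $\{A_v : v \subseteq u\}$, and each is $0 \bmod \de$ --- so $Y_u = 0 \bmod \de$; and $v \subseteq u$ implies $Y_v \le Y_u$. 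Thus $\langle 1_\ba - Y_u : u \in \lao \rangle$ is a monotone family of elements of $\de$.

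Since $|\lao| = \lambda$, I would then apply $\lambda^+$-goodness to the monotone map $f : \fss(\lao) \to \de$, $f(s) = \bigcap_{u \in s}(1_\ba - Y_u)$, obtaining a multiplicative refinement $g$; after the standard preliminary adjustment making $g$ ``supported'' (intersecting $f$ with a regularizing family in the case $\ba = \mcp(I)$, and with the analogous almost-disjoint data over an abstract $\ba$), $g$ determines a partition of $1_\ba$ whose pieces are each labelled by a single finite subset $s \subseteq \lao$, the piece labelled $s$ lying below $g(s) \le 1_\ba - \bigcup_{u \in s} Y_u$. I would now define $\overline{B}$ as the $B_u$ of Claim~\ref{e-ind-step}: on the piece labelled $s$, let $B_u = A_u$ if $u \in s$ and $B_u = 0_\ba$ otherwise, so that on this piece $\langle B_v : v \subseteq u \rangle$ is the $N$-variant $\overline{A}^{w}|_{\mcp(u)}$ with $w = \bigcup(s \cap \mcp(u))$ and with $\bigcup_{u' \in s} Y_{u'}$ deleted (here the labelling must be arranged so that $s \cap \mcp(u)$ is of the form $\mcp(w)$, which is why the label is taken to be a single finite set). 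Conditions (1),(2) of Definition~\ref{d:excellent} then follow: $B_u \le A_u$ by construction; and $B_u = A_u \bmod \de$ because the $u$-matching pieces cover $1_\ba$ modulo $\de$ --- their union dominates $g$ of a finite subset of $\lao$ containing $u$, an element of $\de$ --- and on each such piece only the $\de$-null set $\bigcup_{u'\in s} Y_{u'}$ is removed; crucially, no re-maximalization of $\de$ is needed.

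The crux, and the step I expect to be the main obstacle, is condition (3), which is handled by the same computation as Steps~2 and~5 of Claim~\ref{e-ind-step}. Suppose $\sigma \in \Lambda_{\de,\overline{A}|_u}$ and, for a contradiction, some $t$ lies in $\sigma(\overline{B}|_{\mcp(u)})$; let $s$ label the piece through $t$ and put $w = \bigcup(s \cap \mcp(u)) \subseteq u$. On that piece $\langle B_v : v \subseteq u\rangle$ equals $\overline{A}^{w}|_{\mcp(u)}$ with $\bigcup_{u'\in s} Y_{u'}$ removed, so by the elementary identity $\sigma(\overline{a} \cap e) \cap e = \sigma(\overline{a}) \cap e$, valid in any Boolean algebra, the value $\sigma(\overline{B}|_{\mcp(u)})$ meets this piece inside $\sigma(\overline{A}^{w}|_{\mcp(u)})$. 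But $w \subseteq u$, so $\overline{A}^w|_u \in N(\overline{A}|_u)$ and $\sigma$ vanishes modulo $\de$ on it; reducing $\sigma$ to a term in $\overline{x}_{\mcp(w)}$ shows $\sigma(\overline{A}^{w}|_{\mcp(u)}) = \tilde{\sigma}(\overline{A}|_{\mcp(w)}) \le Y_w \le \bigcup_{u'\in s} Y_{u'}$ --- precisely the $\de$-null set deleted on this piece (this is the role of the closure of $\Lambda$ under zeroing variables built into $N$; cf.\ ``Step~2'' of Claim~\ref{e-ind-step} and Definition~\ref{d:near}). Hence $\sigma(\overline{B}|_{\mcp(u)})$ meets every piece of the partition in $0_\ba$, so $\sigma(\overline{B}|_{\mcp(u)}) = 0_\ba$. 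The hard part of making this rigorous is exactly the labelling and bookkeeping of the $g$-partition --- ensuring that on each piece the sequence collapses to an honest member of $N(\overline{A}|_u)$ while keeping $B_u = A_u \bmod \de$ without enlarging $\de$ --- everything else mirrors Section~\ref{s:l-e}. Since the argument never uses that $\ba = \mcp(I)$, it gives the conclusion for an arbitrary Boolean algebra $\ba$ (cf.\ Remark~\ref{r:some-examples}); together with Claim~\ref{c:some-examples}(3) this completes the characterization that $\de$ is $\lambda^+$-good iff $\de$ is $\lambda^+$-excellent.
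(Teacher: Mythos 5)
Your overall strategy is the right one and is close to the paper's: isolate the finite union $Y_u$ of the $\Lambda$-terms that vanish mod $\de$, use goodness to make these vanishing sets ``support'' a disjoint decomposition, and on each piece show that $\overline B$ collapses to an element of $N(\overline A|_u)$, so the relevant terms vanish outright. But there is a genuine gap, and it lies precisely in the step you yourself flag as ``the hard part.'' You apply $\lambda^+$-goodness to the map $f : \fss(\lao)\to\de$, $f(s)=\bigcap_{u\in s}(1_\ba - Y_u)$. A multiplicative refinement $g$ of this $f$ is multiplicative with respect to unions \emph{in $\fss(\lao)$}, i.e.\ unions of finite sets of finite subsets of $\lambda$. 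After regularizing, the label attached to a point $t$ is $s_t=\bigcup\{s\in\fss(\lao): t\in g(s)\}$; multiplicativity of $g$ over $\fss(\lao)$ guarantees that $\{s : t\in g(s)\}$ is an ideal \emph{in $\fss(\lao)$}, so $s_t$ is well-defined and finite, but it gives $s_t$ no structure whatsoever \emph{as a subfamily of $\lao$}. In particular $s_t$ need not be downward closed in $\lao$ and need not be closed under union. Take $s_t=\{\{0\},\{1\}\}$ and $u=\{0,1\}$: then $s_t\cap\mcp(u)=\{\{0\},\{1\}\}$, which is not of the form $\mcp(w)$ for any $w\subseteq u$, so $\langle B_v : v\subseteq u\rangle$ on this piece is \emph{not} a member of $N(\overline A|_u)$ and the ``$\sigma$ vanishes on it'' argument does not apply. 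The auxiliary inequality you use, $Y_w\le\bigcup_{u'\in s}Y_{u'}$ with $w=\bigcup(s\cap\mcp(u))$, has the same defect: $Y$ is monotone \emph{increasing}, so in the example $Y_{\{0,1\}}$ dominates, rather than being dominated by, $Y_{\{0\}}\cup Y_{\{1\}}$.

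The fix is exactly the paper's choice of index set: apply goodness directly to the $\lao$-indexed monotone sequence $\overline{\ma}^1=\langle\ma^1_u : u\in\lao\rangle$ with $\ma^1_u = 1_\ba - Y_u$ (equivalently $\bigcap\{1_\ba-\sigma(\overline{\ma}|_{\mcp(\upnu)}) : \upnu\subseteq u,\ \sigma\in\Lambda\}$), \emph{not} to the derived map $f$ on $\fss(\lao)$. Then the multiplicative refinement $\overline{\mb}^1$ satisfies $\mb^1_v\cap\mb^1_{v'}=\mb^1_{v\cup v'}$ for $v,v'\in\lao$, so for each $t$ the set $\{v : t\in\mb^1_v\}$ is downward closed and union-closed \emph{in $\lao$}, hence (when finite) equal to $\mcp(w_t)$ for some $w_t$ -- exactly the structure your argument needs. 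With $\mb_u := \ma_u\cap\mb^1_u$, one never needs a global partition of $1_\ba$ at all: for each fixed $u$ the finitely many pieces $\mc_{\upnu,u}=\mb^1_\upnu\setminus\bigcup\{\mb^1_{\upnu\cup\{t\}} : t\in u\setminus\upnu\}$, $\upnu\subseteq u$, partition $\mb^1_\emptyset$, on $\mc_{\upnu,u}$ the restricted sequence is $\overline{\ma}^{\upnu}|_{\mcp(u)}\in N(\overline\ma|_u)$, and $\mc_{\upnu,u}\le\ma^1_\upnu\le 1-\sigma(\overline\ma^\upnu|_{\mcp(u)})$ finishes. This dispenses with the regularizing family and its abstract-$\ba$ analogue entirely (the partition per $u$ is finite, so no regularity is needed), and the argument works verbatim for an arbitrary Boolean algebra as stated in the claim. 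Your Steps~1 and~2 are correct and match the paper's Steps 1--2; the issue is only the change of index set from $\lao$ to $\fss(\lao)$, which silently destroys the ideal structure that makes Step~4 go through.
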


\begin{proof}
Let $\overline{\ma} = \langle \ma_u : u \in \lao \rangle$ be a sequence of elements of $\ba$, and we look for an excellent 
refinement. That is, let $\Lambda = \Lambda_{\ba, \de, \overline{\ma}}$ as in Definition \ref{d:near} above, and
$\ba_1 = \ba/\de$. Then we would like to find $\langle \mb_u : u \in \lao \rangle$ such that whenever 
$\sigma=\sigma(\overline{x}_{\mcp(u)}) \in \Lambda$, i.e. $\sigma(\overline{\ma}^\prime) = 0_{\ba_1}$ for any 
$\overline{\ma}^\prime$ below $\overline{a}$ in the sense of \ref{c:near} (for the Boolean algebra $\ba_1$), 
we have that $\sigma(\overline{\mb}) = 0_{\ba}$.

\br
\step{Step 1: Safe sets.} 
First, for each $u \in \lao$ define:
\[ \ma^1_u = \bigcap \{ 1_\ba - \sigma(\overline{\ma}{\rstr_{\mcp(\upnu)}}) : \upnu \subseteq u, ~\sigma(\overline{x}_{\mcp(\upnu)}) \in \Lambda \} \]

By construction, $\overline{\ma}^1 = \langle \overline{\ma}^1_u : u \in \lao \rangle$ is a monotonic sequence of elements of $\de$. 

\br 
\step{Step 2: A multiplicative refinement.}
Apply the hypothesis of goodness to obtain a multiplicative refinement 
$\overline{\mb}^1 = \langle \mb^1_u : u \in \lao \rangle$  of $\overline{\ma}^1$. Each $\mb^1_u$ is an element of $\ba$, in fact
of $\de$.

\step{Step 3: The sequence $\overline{\mb}$.}
Define $\overline{\mb} = \langle \mb_u : u \in \lao \rangle$ where $\mb_u = \ma_u \cap \mb^1_u$ for each $u \in \lao$.
In the remainder of the proof, we show that $\overline{\mb}$ is the desired excellent refinement of $\overline{\ma}$.
We have immediately from the definition that for each $u \in \lao$,
\begin{itemize}
\item[(a)] $\mb_u \in \ba$
\item[(b)] $\ba \models \mb_u \leq \ma_u$
\item[(c)] $\mb_u = \ma_u \mod \de$
\end{itemize}

It remains to show that excellence holds.

\br
\step{Step 4: Excellence of $\overline{\mb}$.}
For this step, we consider $u \in \lao$ and $\sigma = \sigma(\overline{x}_{\mcp(u)}) \in \Lambda = \Lambda_{\ba, \de, \overline{\ma}}$. 

\br
\step{4a. Remarks.} First, by definition of $\Lambda$ and the fact that $0$ is a constant of the language of Boolean algebras, whenever  
$\sigma = \sigma(\overline{x}_{\mcp(u)}) \in \Lambda = \Lambda_{\ba, \de, \overline{\ma}}$ and
$\overline{\ma}^\prime$ is below $\overline{\ma}\rstr_{\mcp(u)}$ 
with respect to $\ba$ [note: this means substituting in $0_\ba$, not $0_{\ba_1}$, for some elements of $\overline{\ma}^\prime$]
we have that $\ma^1_u \subseteq 1 - \sigma(\overline{\ma}^\prime)$. 

Second, \emph{if} $\upnu \subseteq u$ and $\overline{\ma}^\prime$ is below $\overline{\ma}\rstr_{\mcp(u)}$
in the sense that $\ma^\prime_w = \ma_w$ if $w \subseteq \upnu$ and $\ma^\prime_w = 0_\ba$ otherwise, 
\emph{then} $\ma^1_\upnu\subseteq 1 - \sigma(\overline{\ma}^\prime)$,  just by applying the previous remark twice. 

\step{4b: A partition.}
For each $u \in \lao$ and $\upnu \subseteq u$, define
\[ \mc_{\upnu,u} = \mb^1_\upnu \setminus \bigcup \{ \mb^1_{\upnu \cup \{ t\}} : t \in u \setminus \upnu \}
= \bigcap \{ \mb^1_{t} : t \in \upnu \}  \setminus \bigcup \{ \mb^1_{\{t\}} : t \in u \setminus \upnu \} \]
where the second equality uses multiplicativity of $\overline{\mb}^1$.
Thus $\{ \mc_{\upnu,u} : \upnu \subseteq u \}$ gives a partition of $\mb^1_\emptyset$, 
thus also of $\mb_u \leq \mb^1_u$. 
It will suffice to show that if $\upnu \subseteq u$ and $\mc = \mc_{u,\upnu} > 0$ or $\mc = 1 - \mb^1_\emptyset$, then 
$\ba\rstr_{\mc_{\upnu,u}} \models \sigma(\dots, \mb_w \cap \mc, \dots) _{w \subseteq u}= 0$.

\step{4c. Cases.} 
First, we may justify restricting to $\mb^1_{\emptyset}$ as $\sigma(\dots, 0, \dots) = 0$.

Then letting $\upnu$ vary, we use $\{ \mc_{\upnu,u} : \upnu \subseteq u \}$ to partition $\mb^1_\emptyset$. 
It suffices to show that for each $\mc_{\upnu,u}$, 
$\ba\rstr_{\mc_{\upnu,u}} \models $``$ \sigma(\overline{\mb}{\rstr}_{\mcp(u)}) = 0$''.
Let $u \in \lao$ and $\upnu \subseteq u$ be given. 

If $w \subseteq \upnu$, then $\mc_{\upnu,u} \leq \mb^1_u \leq \mb^1_w$ by definition and by monotonicity. 
Hence $\mb_w \cap \mb^1_w \cap \mc_{\upnu,u} = \mb_w \cap \mc_{\upnu,u} = \ma_w \cap \mb^1_w \cap \mc_{\upnu,u} = \ma_w \cap \mc_{\upnu,u}$, i.e. on $\mc_{\upnu,u}$ we have that $\mb_w = \ma_w$. 

If $w \subseteq u \land w \not\subseteq \upnu$ then $\mb^1_w \cap \mc_{\upnu,u} = 0_\ba$ by definition of $\mc_{\upnu,u}$,
and $\mb_w \leq \mb^1_w$, thus $\mb_w \cap \mc_{\upnu,u} = 0$. 

In other words, writing 
\begin{itemize}
\item $\mb^*_w = \mb_w$ if $\upnu \subseteq w$ and $0_\ba$ otherwise, and
\item $\mb^c_w = \mb_w \cap \mc_{\upnu,u} $ if $\upnu \subseteq w$ and $0_\ba$ otherwise
\end{itemize}
we have shown that
\[ \left( \ba\rstr_{\mc_{\upnu,u}} \models \sigma(\dots \mb^c_w \dots)_{w \subseteq u} = 0 \right) ~~ \Longleftarrow ~~ 
\left( \ba \models \sigma(\dots \mb^*_w \dots)_{w \subseteq u} \cap \mc_{\upnu,u} = 0 \right)   \]

Now by definition of $\mc_{\upnu,u}$, the monotonicity of $\overline{\mb}^1$, and step 4a, we have that 
\[ \mc_{\upnu,u}  \subseteq \bigcap \{ \mb^1_{\{i\}} : i \in \upnu \} \subseteq \mb^1_\upnu \subseteq \ma^1_\upnu \subseteq 1 -  \sigma(\dots \mb^*_w \dots)_{w \subseteq u} \]
which completes the proof.
\end{proof}

\begin{theorem} \emph{(Characterization of goodness)} \label{t:equivalent}
Let $\de$ be a filter on the Boolean algebra $\ba$. Then the following are equivalent.
\begin{enumerate}
\item $\de$ is $\lambda^+$-good.
\item $\de$ is $\lambda^+$-excellent.
\end{enumerate}
\end{theorem}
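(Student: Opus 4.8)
The final statement is Theorem \ref{t:equivalent}, characterizing goodness as excellence. Here is my plan.

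\textbf{Overall strategy.} The theorem asserts the equivalence $(1) \Leftrightarrow (2)$ for a filter $\de$ on a Boolean algebra $\ba$. The direction $(1) \Rightarrow (2)$ is exactly Claim \ref{c:e-g}, which has just been proved in the Appendix, so I would simply cite it. The direction $(2) \Rightarrow (1)$ is the converse: excellence implies goodness. This is essentially part (3) of Claim \ref{c:some-examples}, together with Remark \ref{r:some-examples} which extends that claim from $\mcp(I)$ to an arbitrary Boolean algebra $\ba$ with the same proof. So the entire proof is a two-line assembly of previously established facts.

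\textbf{Details of the $(2) \Rightarrow (1)$ direction.} For completeness I would spell out why \ref{c:some-examples}(3) applies. Suppose $\de$ is $\lambda^+$-excellent. Let $f : \fss(\lambda) \to \de$ be monotonic; we want a multiplicative refinement. Set $\ma_u = f(u) \in \de$ for $u \in \lao$; this is a monotonic $\lao$-indexed sequence of elements of $\de$. By \ref{c:some-examples}(1) (in the Boolean algebra version of Remark \ref{r:some-examples}) applied to $\overline{\ma}$, we obtain $\overline{\mb} = \langle \mb_u : u \in \lao \rangle$ with $\mb_u \le \ma_u$, $\mb_u = \ma_u \bmod \de$, and — since for all $u_0, u_1$ we have $\ma_{u_0} \cap \ma_{u_1} = \ma_{u_0 \cup u_1} \bmod \de$ (because all three are elements of $\de$, hence equal mod $\de$) — we get $\mb_{u_0} \cap \mb_{u_1} = \mb_{u_0 \cup u_1}$. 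Thus $\overline{\mb}$ is a genuine multiplicative refinement of $f$, and each $\mb_u \in \de$. This is precisely goodness, Definition \ref{d:good}.

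\textbf{Anticipated obstacle.} There is essentially no obstacle here: both directions are already done. The only thing to watch is the bookkeeping point that \ref{c:some-examples} is stated for $\de$ a filter on $\mcp(I)$, and here $\ba$ is arbitrary — but Remark \ref{r:some-examples} explicitly says the proof goes through verbatim for arbitrary $\ba$, so I would invoke it. Likewise I should make sure the hypothesis "$f : \fss(\lambda) \to \de$" matches the "sequence of elements of $\de$" hypothesis needed to derive multiplicativity-mod-$\de$ for free, which it does since any two elements of $\de$ agree mod $\de$. Here is the writeup:

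\begin{proof}
$(1) \implies (2)$ is Claim \ref{c:e-g}.

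$(2) \implies (1)$: Suppose $\de$ is $\lambda^+$-excellent, and let $f : \fss(\lambda) \to \de$ be monotonic; we find a multiplicative refinement. For $u \in \lao$ put $\ma_u = f(u)$, so $\overline{\ma} = \langle \ma_u : u \in \lao \rangle$ is a monotonic sequence of elements of $\de$. Apply Claim \ref{c:some-examples}(1), which by Remark \ref{r:some-examples} holds with $\mcp(I)$ replaced by the arbitrary Boolean algebra $\ba$, to obtain $\overline{\mb} = \langle \mb_u : u \in \lao \rangle$ with $\mb_u \leq \ma_u$ and $\mb_u = \ma_u \bmod \de$ for each $u$. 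Since $\ma_{u_0}, \ma_{u_1}, \ma_{u_0 \cup u_1}$ all lie in $\de$, we have $\ma_{u_0} \cap \ma_{u_1} = \ma_{u_0 \cup u_1} \bmod \de$ for all $u_0, u_1 \in \lao$, so condition (d) of \ref{c:some-examples}(1) gives $\mb_{u_0} \cap \mb_{u_1} = \mb_{u_0 \cup u_1}$. As $\mb_u = \ma_u \bmod \de$ and $\ma_u \in \de$, also $\mb_u \in \de$. Thus $\overline{\mb}$ is a multiplicative refinement of $f$ consisting of elements of $\de$, so $\de$ is $\lambda^+$-good.
\end{proof}
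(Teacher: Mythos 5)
Your proof is correct and takes exactly the same approach as the paper: cite Claim \ref{c:e-g} for good $\Rightarrow$ excellent, and Claim \ref{c:some-examples}(3) (via Remark \ref{r:some-examples}) for excellent $\Rightarrow$ good. (In fact the paper's printed proof swaps the two citations between the directions, which is a typo; your attribution is the correct one, and your extra two sentences spelling out why the mod-$\de$ multiplicativity hypothesis of \ref{c:some-examples}(1)(d) is automatically satisfied when the sequence lies in $\de$ match the remark opening the paper's proof of Claim \ref{c:some-examples}.)
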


\begin{proof}
(1) $\rightarrow$ (2) Claim \ref{c:some-examples}, via Remark \ref{r:some-examples}.

(2) $\rightarrow$ (1) Claim \ref{c:e-g}.
\end{proof}

\br
\br
\br

\end{document}